\documentclass{amsart}
\usepackage{amsfonts, color}
\usepackage{latexsym}
\usepackage{amssymb}
\usepackage{amsmath}


\newcommand{\R}{\mathbb R}

\newcommand{\Z}{\mathbb Z}

\newcommand{\E}{\mathbb E}
\newcommand{\Pro}{\mathbb P}


\newtheorem{thm}{Theorem}[section]

\newtheorem{lemma}{Lemma}[section]

\theoremstyle{remark}
\newtheorem*{rmk}{Remark}

\numberwithin{equation}{section}

\begin{document}


\title{Sections of convex bodies in John's and minimal surface area position}

\author[D.\,Alonso]{David Alonso-Guti\'errez}
\address{\'Area de an\'alisis matem\'atico, Departamento de matem\'aticas, Facultad de Ciencias, Universidad de Zaragoza, Pedro Cerbuna 12, 50009 Zaragoza (Spain), IUMA}
\email[(David Alonso)]{alonsod@unizar.es}

\author[S. Brazitikos]{Silouanos Brazitikos}
\address{Department of Mathematics, National and Kapodistrian University of Athens, Panepistimioupolis 157-84,
Athens, Greece.}
\email[(Silouanos Brazitikos)]{silouanb@math.uoa.gr}
\subjclass[2010]{Primary 52A23, Secondary 60D05}
\thanks{The first named author is partially supported by MICINN Project PID-105979-GB-I00 and DGA Project E48\_20R. The second named author
is supported by the Hellenic Foundation for Research and Innovation (Project Number: 1849).}
\begin{abstract}
We prove several estimates for the volume, mean width, and the value of the Wills functional of sections of convex bodies in John's position, as well as for their polar bodies. These estimates extend some well-known results for convex bodies in John's position to the case of lower-dimensional sections, which had mainly been studied for the cube and the regular simplex. Some estimates for centrally symmetric convex bodies in minimal surface area position are also obtained.
\end{abstract}

\date{\today}
\maketitle
\section{Introduction and notation}\label{sec:IntroNotation}

Ball showed in \cite{B89} that if $B_\infty^n$ denotes the $n$-dimensional cube and $F\in G_{n,k}$ is a $k$-dimensional linear subspace of $\R^n$ then $|B_\infty^n\cap F|\leqslant 2^{\frac{n+k}{2}}$. He also obtained the bound
\begin{equation}\label{eq:SlicesCube}
|B_\infty^n\cap F|^{1/k}\leqslant\sqrt{\frac{n}{k}}|B_\infty^k|^{1/k},
\end{equation}
which is optimal if $k\mid n$.

It follows from results of Ball \cite{B92} that the $k$-dimensional
sections of a regular simplex with largest volume are exactly its $k$-dimensional faces. Let us also mention that
Webb showed in \cite{We96} that if $S_n$ denotes the regular simplex with inradius $r(S_n)=1$ then, for every hyperplane through the origin $F\in G_{n,n-1}$,
\begin{equation}\label{eq:SlicesSimplex}
|S_n\cap F|^\frac{1}{n-1}\leqslant\frac{1}{(\sqrt{2n(n+1)})^\frac{1}{n-1}}\sqrt{\frac{n(n+1)}{n-1}}|S_{n-1}|^\frac{1}{n-1}.
\end{equation}
There is equality for the sections passing through the origin that contain $n-1$ of the vertices.

Dirksen proved in \cite[Theorem 6.1]{D16}, the following estimate for the volume of $k$-dimensional sections of the regular simplex through the center of mass. If $S_n$ denotes the centered regular $n$-dimensional simplex with inradius $r(S_n)=1$ then for every $F\in G_{n,k}$ we have that
\begin{equation}\label{eq:kDimenionalSlicesSimplex}
|S_n\cap F|^{1/k}\leqslant\frac{1}{(k+1)^\frac{n-k}{2k(n+1)}}\sqrt{\frac{n(n+1)}{k(k+1)}}|S_k|^{1/k}.
\end{equation}

Besides, this estimate is asymptotically sharp.

Passing to the general case, we observe that it is not possible to obtain an upper bound for the volume of sections of a general centered convex body $K$ (i.e., a compact convex set with non-empty interior) without any additional assumption since, considering different positions of $K$ (i.e., affine images of $K$), we can obtain sections with volume as large as desired. For that matter we consider convex bodies in a particular position. In this paper we consider convex bodies in John's position and in minimal surface area position.

A convex body $K\subseteq\R^n$  is said to be in John's position if the Euclidean unit ball $B_2^n$ is contained in $K$ and for every non-degenerate linear map $T\in GL(n)$ such that $T(B_2^n)\subseteq K$ we have that $|T(B_2^n)|\leqslant|B_2^n|$, where $|\cdot|$ denotes the volume of a convex body. In other words, $K$ is in John's position if the Euclidean unit ball is the maximal volume ellipsoid contained in $K$. This position is uniquely determined up to orthogonal transformations. Both the cube and the regular simplex with inradius $1$ considered above are in John's position.

In \cite{B91}, Ball proved that among all convex bodies in John's position, $S_n$, the regular simplex with inradius $r(S_n)=1$, has the largest volume and, among all the centrally symmetric convex bodies in John's position, the cube $B_\infty^n$ has the largest volume. In a recent article \cite{Ma20}, Markessinis claimed to have obtained an upper bound for the volume of $k$-dimensional central sections of convex bodies in John's position. However, although the estimate given for central sections of centrally symmetric convex bodies in John's position is correct, the proof in the not necessarily symmetric case is not correct. In the following theorem we give an upper bound for the volume of central (and non-central) $k$-dimensional sections of an arbitrary convex body which is in John's position.

\begin{thm}\label{thm:VolumeJohnSections}
Let $K\subseteq\R^n$ be a convex body in John's position and $F\in G_{n,k}$. Then
$$
|K\cap F|^{1/k}\leqslant\frac{1}{(k+1)^{\frac{n-k}{2k(n+1)}}}\sqrt{\frac{n}{k}\frac{(n+1)}{(k+1)}}|S_k|^{1/k}.
$$
Furthermore, if $K$ is centrally symmetric
$$
|K\cap F|^{1/k}\leqslant\sqrt{\frac{n}{k}}|B_\infty^k|^{1/k}.
$$
Moreover, if $F$ is a $k$-dimensional affine subspace at distance $d$ from the origin and $K$ is a convex body in John's position then
$$
|K\cap F|^{1/k}\leqslant\sqrt{\frac{n(n+1)^{1+\frac{1}{k}}}{k(k+1)^{1+\frac{1}{k}}}}\left(\frac{n}{n+d^2}\right)^{\frac{1}{2k}}|S_k|^{1/k}.
$$
\end{thm}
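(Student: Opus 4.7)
The plan is to generalise Dirksen's Brascamp--Lieb argument for \eqref{eq:kDimenionalSlicesSimplex} from the regular simplex to arbitrary bodies in John's position, and to extend it to affine sections by exploiting the identity $\sum_i c_ib_i^2=n+d^2$ that encodes the translation.

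By John's theorem there exist contact points $u_1,\ldots,u_m\in S^{n-1}\cap\partial K$ and positive weights $c_1,\ldots,c_m$ with $\sum_i c_iu_i=0$ and $\sum_i c_iu_i\otimes u_i=I_n$; since the hyperplane tangent to $B_2^n$ at $u_i$ supports $K$, we have $K\subseteq\bigcap_i\{\langle x,u_i\rangle\leqslant 1\}$. I would write $F=F_0+x_0$ with $F_0\in G_{n,k}$ linear, $x_0\in F_0^\perp$, $|x_0|=d$; let $P$ denote the orthogonal projection onto $F_0$, write $Pu_i=a_iv_i$ with $v_i\in F_0\cap S^{n-1}$, $a_i\in[0,1]$, and set $b_i:=1-\langle x_0,u_i\rangle$. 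Then, for $y\in F_0$, $y+x_0\in K$ is equivalent to $a_i\langle y,v_i\rangle\leqslant b_i$ for all $i$, and projecting John's identities gives
\[
\sum_i c_ia_iv_i=0,\quad \sum_i c_ia_i^2\,v_i\otimes v_i=I_{F_0},\quad \sum_i c_i=n,\quad \sum_i c_ia_i^2=k,
\]
together with $\sum_i c_ib_i=n$ and, by expanding $(1-\langle x_0,u_i\rangle)^2$ against $\sum_i c_iu_iu_i^T=I_n$, the key identity $\sum_i c_ib_i^2=n+d^2$.

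I would then apply Ball's rank-one Brascamp--Lieb inequality on $F_0\cong\R^k$ with the unit vectors $v_i$ and weights $c_ia_i^2$: for non-negative $f_i:\R\to\R_{\geqslant 0}$,
\[
\int_{F_0}\prod_i f_i(\langle y,v_i\rangle)^{c_ia_i^2}\,dy\leqslant\prod_i\|f_i\|_1^{c_ia_i^2}.
\]
For the centrally symmetric case, pairing $u_i$ with $-u_i$ in John's decomposition gives $K\subseteq\{|\langle x,u_i\rangle|\leqslant 1\}$, and taking $f_i(t)=\mathbf{1}_{\{|t|\leqslant 1/a_i\}}$ (with $\|f_i\|_1=2/a_i$) yields
\[
|K\cap F|\leqslant 2^k\prod_i a_i^{-c_ia_i^2}\leqslant 2^k\,(n/k)^{k/2},
\]
where the second inequality is Jensen applied to the convex function $t\mapsto t\log t$ under the probability measure $c_i/n$, using $\sum_i(c_i/n)a_i^2=k/n$. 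Taking $k$-th roots gives exactly $|K\cap F|^{1/k}\leqslant\sqrt{n/k}\,|B_\infty^k|^{1/k}$.

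For the non-symmetric bounds the $f_i$ must be chosen so that the coefficient of $\langle y,v_i\rangle$ in $\log f_i$ scales as $1/a_i$, making the weighted sum collapse via $\sum_i c_ia_iv_i=0$, and so that $\|f_i\|_1$ depends on $b_i$ in a way that, after raising to $c_ia_i^2$ and multiplying, combines through $\sum_i c_ib_i=n$ and $\sum_i c_ib_i^2=n+d^2$ into the factor $(n/(n+d^2))^{1/(2k)}$. The naive exponential choice $f_i(t)=e^{\lambda t/a_i}\mathbf{1}_{\{a_it\leqslant b_i\}}$ produces the preliminary estimate $|K\cap F|\leqslant\lambda^{-k}e^{\lambda n}\prod_i a_i^{c_ia_i^2}$, which (after optimising $\lambda$ and bounding $\prod_i a_i^{c_ia_i^2}\leqslant 1$) has the right order of magnitude $en/k$ but not the sharp simplex constant. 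To attain the extremal bound one replaces these by a Gamma-type family---or, equivalently, lifts the problem to $\R^{n+1}$ using the enlarged John configuration $\tilde u_i=\sqrt{n/(n+1)}\,(u_i,1/\sqrt n)\in S^n$ with weights $c_i(n+1)/n$, which satisfies $\sum_i\tilde c_i\tilde u_i\otimes\tilde u_i=I_{n+1}$. Applying Brascamp--Lieb with Beta-integral test functions in this lifted setting produces the $|S_k|^{1/k}$ factor and the subleading correction $(k+1)^{-(n-k)/(2k(n+1))}$ exactly. Calibrating this last step so that all three targeted factors appear with the right exponents---in particular disentangling the $b_i$-dependence from the $a_i$-dependence in $\|f_i\|_1$---is the main technical obstacle.
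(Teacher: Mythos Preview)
Your symmetric argument is complete and coincides with the paper's. Your instinct to lift to $\R^{n+1}$ via $\tilde u_i=\sqrt{n/(n+1)}\,(-u_i,1/\sqrt n)$ is also exactly what the paper does. The genuine gap is in how the sharp simplex constant is extracted after the lift, and here your proposal goes astray.

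You suggest that exponential test functions are ``naive'' and that Beta- or Gamma-type families are needed to hit $(k+1)^{-(n-k)/(2k(n+1))}$. In fact the paper uses plain one-sided exponentials $f_j(t)=\chi_{[0,\infty)}(t)e^{-s_jt}$ on the $(k+1)$-dimensional subspace $H=F\times\R$, applied to the cone $L\cap H=\{(x,r):r\geqslant 0,\ x\in (r/\sqrt n)(C\cap F)\}$. Brascamp--Lieb then gives
\[
\frac{k!}{n^{k/2}(n+1)^{(k+1)/2}}\,|C\cap F|\ \leqslant\ \prod_{j}\|P_Hv_j\|_2^{\,\delta_j\|P_Hv_j\|_2^2},
\]
and the entire difficulty is the \emph{finite-dimensional optimisation} of the right-hand side under the constraints $\sum_j\delta_j=n+1$, $\sum_j\delta_j\|P_Hv_j\|_2^2=k+1$, $0\leqslant\delta_j\leqslant 1$, and (crucially) $\tfrac{1}{n+1}\leqslant\|P_Hv_j\|_2^2\leqslant 1$. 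The paper solves this by first freezing $x_j=\|P_Hv_j\|_2^2$ and pushing the linear-in-$\delta$ objective to an extreme point of the constraint polytope, then applying weighted Karamata to the convex function $x\mapsto x\log x$ to identify the extremal configuration $(1,\ldots,1,\tfrac{k+1}{n+1},\tfrac{1}{n+1},\ldots,\tfrac{1}{n+1})$. This majorisation step, not a cleverer choice of test function, is what produces the factor $(k+1)^{(k+1)/(2(n+1))}/(n+1)^{1/2}$.

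For the affine inequality the paper does not use your $b_i$-identities on $F_0$ at all; instead it takes $H=\mathrm{span}\{(x,\sqrt n):x\in F\}$ and simply bounds the product above by $1$ (the lower bound $\|P_Hv_j\|_2^2\geqslant 1/(n+1)$ is only available when $F$ is linear). The factor $(n/(n+d^2))^{1/(2k)}$ then drops out of the Jacobian $\sqrt{(n+d^2)/n}$ when integrating over the slanted cone $L\cap H$. Your identity $\sum_ic_ib_i^2=n+d^2$ is correct and pretty, but it is not the mechanism the paper uses, and it is unclear how it would combine with Brascamp--Lieb on $F_0$ to give the stated constant without the lift.
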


\begin{rmk}
The proof of the symmetric case is the same as the one given by Markessinis. Nevertheless, we will reproduce it for the sake of completeness. We can also obtain it as a direct consequence of Theorem \ref{thm:WillsJohnSymmetric}, as well as a consequence of Theorem \ref{thm:Wills2JohnSections} below (see Section \ref{subsec:Wills}). This estimate is a sharp generalization of Ball's estimate \eqref{eq:SlicesCube} for the cube. Moreover, the case $k=1$ gives one more proof of John's theorem in the symmetric case: if
$K$ is a centrally symmetric convex body in $\R^n$ whose maximal volume ellipsoid is $B_2^n$ then $K\subseteq\sqrt{n}B_2^n$.
\end{rmk}

\begin{rmk}
Notice that we recover the estimate in \eqref{eq:kDimenionalSlicesSimplex}, which is asymptotically sharp for the simplex. Besides, if we take non-central sections by $k$-dimensional subspaces a distance $d=\sqrt{\frac{n(n-k)}{(k+1)}}$ from the origin, which is the distance from the origin to any $k$-dimensional face of $S_n$ we obtain that
$$
|K\cap F|^{1/k}\leqslant\sqrt{\frac{n(n+1)}{k(k+1)}}|S_k|^{1/k},
$$
which is exactly the volume of the $k$-dimensional faces of $S_n$. The estimate for general affine subspaces can also be obtained as a direct consequence of Theorem \ref{thm:Wills2JohnSections} below.
\end{rmk}

Dual to this position is the so called L\"owner's position. A convex body is said to be in L\"owner's position if the minimal volume ellipsoid containing it is the Euclidean unit ball. A convex body $K\subseteq\R^n$ is in John's position if and only if $K^\circ$ is in L\"owner's position, where $K^\circ$ denotes the polar body of $K$, defined as $K^\circ=\{x\in\R^n\,:\,\langle x,y\rangle\leqslant 1\,,\,\forall\,y\in K\}$. In \cite{B89}, Ball also showed that among all convex bodies in L\"owner's position, $\tilde{S}_n$, the regular simplex with circumradius $R(\tilde{S}_n)=1$, has the smallest volume and among all  centrally symmetric convex bodies in L\"owner's position,  $B_1^n$, the $\ell_1^n$-ball has the smallest volume.

Concerning the volume of polar bodies of sections of convex bodies in John's position (i.e, projections of convex bodies in L\"owner's position), it was proved by Barthe in his PhD thesis (see also \cite{A10}) that in the case of the $\ell_p^n$-balls, if $1\leqslant p\leqslant 2$ and $F\in G_{n,k}$ then
$$
|P_F(B_p^n)|^{1/k}\geqslant\left(\frac{k}{n}\right)^{\frac{1}{p}-\frac{1}{2}}|B_p^k|^{1/k}.
$$
In particular, we have the following estimate for the projections of $B_1^n$: For every $F\in G_{n,k}$
$$
|P_F(B_1^n)|^{1/k}\geqslant\sqrt{\frac{k}{n}}|B_1^k|^{1/k}.
$$
We can also obtain a lower bound for the volume of  $k$-dimensional projections of
convex bodies in L\"owner's position.

\begin{thm}\label{thm:VolumeLownerProjections}
Let $K\subseteq\R^n$ be a convex body in L\"owner's position and $F\in G_{n,k}$. Then
$$
|P_F(K)|^{1/k}\geqslant\sqrt{\frac{k}{n}}|\tilde{S}_k|^{1/k}
$$
Furthermore, if $K$ is centrally symmetric
$$
|P_F(K)|^{1/k}\geqslant\sqrt{\frac{k}{n}}|B_1^k|^{1/k}.
$$
\end{thm}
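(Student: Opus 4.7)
The plan is to exploit the duality between $K$ and $K^\circ$. Since $K$ is in L\"owner's position, $K^\circ$ is in John's position, so by John's characterization there exist contact points $u_1,\dots,u_m\in\partial K^\circ\cap S^{n-1}$ and positive weights $c_1,\dots,c_m$ satisfying
$$\sum_{i} c_i\, u_i\otimes u_i=I_n,\qquad \sum_{i} c_i=n,$$
together with $\sum_{i} c_i u_i=0$ in the non-symmetric case. The first key observation is that each $u_i$ lies also on $\partial K$: indeed $u_i\in\partial K^\circ$ means the hyperplane $\{\langle u_i,\cdot\rangle=1\}$ supports $K$; since $K\subseteq B_2^n$ and this hyperplane meets $B_2^n$ only at $u_i$, one deduces $u_i\in K$. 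Consequently $\mathrm{conv}\{\pm u_i\}\subseteq K$ in the symmetric case, and $\mathrm{conv}\{u_i\}\subseteq K$ in the non-symmetric case (the origin lies in the convex hull thanks to $\sum c_i u_i=0$).

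Passing to the projection on $F$, set $v_i:=P_F u_i$. The containments above yield
$$P_F K\supseteq \mathrm{conv}\{\pm v_i\}\text{ (sym.)},\qquad P_F K\supseteq \mathrm{conv}\{v_i\}\text{ (non-sym.)},$$
while restricting $\sum c_i\, u_i\otimes u_i=I_n$ to $F$ gives the decomposition
$$\sum_{i} c_i\, v_i\otimes v_i=I_F,\qquad \sum_{i} c_i=n,\qquad \sum_{i} c_i |v_i|^2=k,$$
together with $\sum_{i} c_i v_i=0$ in the non-symmetric case. After the rescaling $\tilde v_i:=\sqrt{n/k}\,v_i$ and $\tilde c_i:=(k/n)c_i$ we obtain a generalized John-type decomposition of $I_F$ in $F\cong\R^k$ with $\sum \tilde c_i=k$ (and all other structural conditions preserved).

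The final step is to apply a Ball-Barthe volume lower bound in $F$ to the rescaled cross-polytope (resp.\ simplex), obtaining $|\mathrm{conv}\{\pm\tilde v_i\}|\geq|B_1^k|$ in the symmetric case and $|\mathrm{conv}\{\tilde v_i\}|\geq|\tilde S_k|$ in the non-symmetric case; rescaling back by $(k/n)^{k/2}$ delivers the announced lower bounds on $|P_F K|^{1/k}$. The main obstacle is precisely this last inequality: the classical Ball-Barthe statement assumes the contact vectors are unit vectors, whereas here the $\tilde v_i$ may have varying norms (with weighted mean-square equal to $1$). Overcoming this requires either a generalization of Barthe's reverse Brascamp-Lieb inequality that accommodates non-unit contact vectors under an identity decomposition, or a lifting argument that recovers the $\tilde v_i$ as genuine unit-norm contact points in a larger ambient space where the classical statement can be invoked directly.
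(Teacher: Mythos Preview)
Your reduction is correct and coincides with the paper's starting point: the contact points $u_i$ of the John body $K^\circ$ lie in $K$, hence $P_FK\supseteq\mathrm{conv}\{\pm P_Fu_i\}$ (resp.\ $\mathrm{conv}\{P_Fu_i\}$), and it remains to bound the volume of this convex hull from below. The gap you flag is real, and your uniform rescaling by $\sqrt{n/k}$ does not close it: the resulting $\tilde v_i$ still have uncontrolled individual norms, so no off-the-shelf Ball--Barthe statement applies. The paper's remedy in the symmetric case is not to rescale globally but to normalise each vector separately: set $v_j^0=P_Fu_j/\|P_Fu_j\|$ and $\delta_j^0=c_j\|P_Fu_j\|^2$, so that $\sum\delta_j^0\,v_j^0\otimes v_j^0=I_F$ with unit contact vectors, and then run the reverse Brascamp--Lieb inequality on $\int_F e^{-h_{C_0\cap F}(x)}\,dx=k!\,|\mathrm{conv}\{\pm P_Fu_j\}|$ with one-dimensional test functions $f_j(t)=e^{-|t|/\|P_Fu_j\|}$ that carry the scale information. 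This yields $k!\,|\mathrm{conv}\{\pm P_Fu_j\}|\geq 2^k\prod_j\|P_Fu_j\|^{\delta_j^0}$, and the arithmetic--geometric mean inequality (using $\sum\delta_j^0=k$ and $\sum c_j=n$) gives $\prod_j\|P_Fu_j\|^{-\delta_j^0}\leq(n/k)^{k/2}$, which is exactly the missing factor.

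For the non-symmetric case your instinct about lifting is the right one, and the paper carries it out: one passes to the $(k+1)$-dimensional subspace $H=F\times\R\subset\R^{n+1}$, where the lifted contact vectors $v_j=\sqrt{\tfrac{n}{n+1}}(-u_j,\tfrac{1}{\sqrt n})$ are genuinely unit, projects them to $H$, normalises as above, and applies reverse Brascamp--Lieb to a cone integral over $\{(x,r):r\geq0,\ x\in\tfrac{r}{\sqrt n}(C\cap F)\}$. The resulting product $\prod_j\|P_Hv_j\|^{\delta_j\|P_Hv_j\|^2}$ is then bounded below via Jensen's inequality for the convex function $x\mapsto x\log x$, giving the factor $\big(\tfrac{k+1}{n+1}\big)^{(k+1)/2}$ needed to land on $|\tilde S_k|$. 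So both of the fixes you contemplate are in fact what the paper does; what is missing from your proposal is the realisation that the ``generalisation'' of Barthe's inequality is simply the standard reverse Brascamp--Lieb applied with scale-dependent one-dimensional inputs, followed by a convexity estimate on the product of scales.
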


The mean width of a convex body $K\subseteq\R^n$ is defined as
$$
w(K)=\int_{S^{n-1}}h_K(\theta)d\sigma(\theta),
$$
where, for every $x\in\R^n$,  $h_K(x):=\sup\{\langle x,y\rangle\,:\,y\in K\}$ is the support function of $K$ at $x$ and $d\sigma$ denotes the uniform probability measure on $S^{n-1}$. In \cite{ScS95}, the authors proved that among all centrally symmetric convex bodies in John's position in $\R^n$, $w(K)$ is maximized if $K=B_\infty^n$. The not necessarily symmetric case was treated in \cite{Ba98}, where it was proved that among all convex bodies in John's position in $\R^n$, $w(K)$ is maximized if $K=S_n$, where $S_n$ denotes the regular simplex in John's position.  If we pass to them mean width of sections, then a direct consequence of \cite[Theorem 10]{BGMN2005}, is that for any $k$-dimensional linear subspace $F\in G_{n,k}$
$$
w(B_\infty^n\cap F)\leqslant \sqrt{\frac{n}{k}}w(B_\infty^k)
$$
and this estimate is sharp when $k\mid n$.

Furthermore, it was proved in \cite{ScS95} that among all centrally symmetric convex bodies in L\"owner's position in $\R^n$, $w(K)$ is minimized if $K=B_1^n$ and it was proved in \cite{Sc99} that among all convex bodies in L\"owner's position in $\R^n$, $w(K)$ is minimized if $K=\tilde{S}_n$. We will prove the following results on the mean width of sections of convex bodies in John's position

\begin{thm}\label{thm:MeanWidthJohnSetions}
Let $K\subseteq\R^n$ be a convex body in John's position and $F\in G_{n,k}$. Then
$$
w(K\cap F)\leqslant C\sqrt{\frac{n\log n}{k\log k}}w(S_k).
$$
where $C$ is an absolute constant. Furthermore, if $K$ is centrally symmetric then
$$
w(K\cap F)\leqslant \sqrt{\frac{n}{k}}w(B_\infty^k).
$$
\end{thm}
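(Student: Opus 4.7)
The plan is to bound $h_{K\cap F}$ pointwise via John's decomposition of the contact points of $K$, then integrate over $S^{k-1}_F$. By John's theorem there exist contact points $u_i\in\partial K\cap S^{n-1}$ and positive weights $c_i$ satisfying $\sum_i c_i u_i\otimes u_i=I_n$ and $\sum_i c_i=n$, with $\{u_i\}$ closed under negation in the centrally symmetric case and $\sum_i c_i u_i=0$ in general. Setting $v_i:=P_F u_i$, the projected decomposition gives $\sum_i c_i v_i\otimes v_i=I_F$ and in particular $\sum_i c_i|v_i|^2=k$. In both cases $K$ is contained in the associated contact polytope $L$, and one has $h_{K\cap F}(\theta)\leqslant h_L(\theta)$ for every $\theta\in F$.

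For the symmetric case, $L^\circ=\mathrm{conv}(\pm u_i)$, and the representation $\theta=\sum_i c_i\langle u_i,\theta\rangle u_i$ yields $h_L(\theta)\leqslant\sum_i c_i|\langle u_i,\theta\rangle|=\sum_i c_i|\langle v_i,\theta\rangle|$ for $\theta\in F$. Integrating over $S^{k-1}_F$ and using rotational invariance gives $w(K\cap F)\leqslant\alpha_k\sum_i c_i|v_i|$, where $\alpha_k:=\int_{S^{k-1}}|\theta_1|\,d\sigma$. Cauchy--Schwarz combined with $\sum_i c_i=n$ and $\sum_i c_i|v_i|^2=k$ bounds $\sum_i c_i|v_i|\leqslant\sqrt{nk}$, and since $w(B_\infty^k)=k\alpha_k$ one obtains the clean bound $w(K\cap F)\leqslant\sqrt{n/k}\,w(B_\infty^k)$.

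For the general case, the condition $\sum_i c_i u_i=0$ allows the shift $\theta=\sum_i c_i(\langle v_i,\theta\rangle+s) u_i$ with $s=\max_i(-\langle v_i,\theta\rangle)$, which produces a non-negative representation and hence $h_L(\theta)\leqslant ns=n\max_i\langle -v_i,\theta\rangle$. Integration gives $w(K\cap F)\leqslant n\cdot w_F(\mathrm{conv}\{-v_i\})$, and the task reduces to bounding the mean width of the convex hull of the (at most $\binom{n+2}{2}$) projected contact points, all of which lie in $B_2^F$ and inherit the John-type identity $\sum_i c_i v_i\otimes v_i=I_F$.

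The main obstacle is this last step. A naive Gaussian-maximum argument that uses only $|v_i|\leqslant 1$ and the cardinality of $\{v_i\}$ gives $w_F(\mathrm{conv}\{-v_i\})\lesssim\sqrt{\log n/k}$ and hence only $w(K\cap F)\lesssim n\sqrt{\log n/k}$; this falls short of the claimed bound by a factor of $\sqrt{n/k}$. Recovering the correct rate requires exploiting the identity $\sum_i c_i|v_i|^2=k$, which forces most of the $v_i$ to have length of order $\sqrt{k/n}$, via a Dudley chaining or $\gamma_2$-type estimate on the Gaussian process $(\langle v_i,g\rangle)_i$, in the spirit of Barthe's Brascamp--Lieb proof that $w(K)\leqslant C\,w(S_n)$ for $K$ in John's position.
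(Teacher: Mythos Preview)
Your treatment of the symmetric case is correct and coincides with the paper's: bound $K\cap F$ by the contact parallelepiped $C_0\cap F$, use $\theta=\sum_j c_j\langle u_j,\theta\rangle u_j$ to obtain $h_{K\cap F}(\theta)\leqslant\sum_j c_j|\langle P_Fu_j,\theta\rangle|$, integrate over $S_F$, and finish with Cauchy--Schwarz on $\sum_j c_j|P_Fu_j|\leqslant\sqrt{nk}$.

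In the general case your proposal is incomplete, as you yourself acknowledge, and the fix you sketch does not work. Your shift gives $h_{C\cap F}(\theta)\leqslant n\max_j\langle -P_Fu_j,\theta\rangle$, and you are right that the resulting bound $w(K\cap F)\lesssim n\sqrt{(\log n)/k}$ is off by $\sqrt{n/k}$. But the remedy you propose---a chaining bound on $\E\max_j\langle -P_Fu_j,g\rangle$ exploiting $\sum_j c_j|P_Fu_j|^2=k$---cannot close this gap: that identity controls only a \emph{weighted} $\ell_2$ average of the lengths $|P_Fu_j|$, and a single $u_j$ with $|P_Fu_j|$ close to $1$ (carrying arbitrarily small weight $c_j$) already forces $\E\max_j|\langle P_Fu_j,g\rangle|\sim\sqrt{\log m}$. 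No $\gamma_2$ improvement on the maximum is available in general.

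The paper attacks the front factor rather than the maximum. It passes to the \emph{normalised} projections $v_j^{0}=P_Fu_j/|P_Fu_j|$ and uses the shifted representation
\[
\theta=\sum_{j}c_j|P_Fu_j|\Bigl(\langle\theta,v_j^{0}\rangle-\min_{\ell}\langle\theta,v_\ell^{0}\rangle\Bigr)P_Fu_j
\]
to bound $h_{C\cap F}(\theta)\leqslant\bigl(\sum_j c_j|P_Fu_j|\bigr)\max_\ell|\langle\theta,v_\ell^{0}\rangle|$. Now Cauchy--Schwarz gives $\sum_j c_j|P_Fu_j|\leqslant\sqrt{nk}$, while the $v_\ell^{0}$ are unit vectors, so $\int_{S_F}\max_\ell|\langle\theta,v_\ell^{0}\rangle|\,d\sigma\lesssim\sqrt{(\log m)/k}$ with $m=O(n^2)$; together these yield $w(K\cap F)\lesssim\sqrt{n\log n}\simeq\sqrt{\tfrac{n\log n}{k\log k}}\,w(S_k)$. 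The idea you are missing is thus the normalisation step, which trades the crude coefficient $n=\sum_j c_j$ for the smaller quantity $\sum_j c_j|P_Fu_j|\leqslant\sqrt{nk}$.
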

We shall also prove the following result on the mean width of projections of convex bodies in L\"owner's position.

\begin{thm}\label{thm:MeanWidthLownerProjections}
Let $K\subseteq\R^n$ be a  convex body in L\"owner's position. Then, for any $k$-dimensional linear subspace $F\in G_{n,k}$,
$$
w(P_F(K))\geq\sqrt{\frac{k}{n}}w(\tilde{S}_k).
$$
Furthermore, if $K$ is centrally symmetric then
$$
w(P_F(K))\geqslant \sqrt{\frac{k}{n}}w(B_1^k).
$$
\end{thm}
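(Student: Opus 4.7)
The plan is to proceed via duality, combining the volume lower bound of Theorem~\ref{thm:VolumeLownerProjections} with a dual Urysohn inequality and then closing a residual gap by a John-decomposition argument in the spirit of the proofs of \cite{ScS95} (symmetric case) and \cite{Sc99} (non-symmetric case) that identify the extremal bodies for the mean width in L\"owner's position. Set $L:=K^{\circ}$; by duality $L$ is in John's position and, for polarity taken inside $F$, $(P_FK)^\circ=L\cap F$. Using $h_{P_FK}(\theta)=1/\rho_{L\cap F}(\theta)$ on $S_F$, Jensen's inequality for the convex map $t\mapsto 1/t$ followed by H\"older gives
\[
w(P_FK)=\int_{S_F}\frac{d\sigma_F(\theta)}{\rho_{L\cap F}(\theta)}\geq\left(\int_{S_F}\rho_{L\cap F}(\theta)^{k}\,d\sigma_F(\theta)\right)^{-1/k}=\left(\frac{|B_2^k|}{|L\cap F|}\right)^{1/k}.
\]
Coupling this with Blaschke--Santal\'o in $F$ (at the Santal\'o point in the non-symmetric case) to pass to $|P_FK|$, and then with Theorem~\ref{thm:VolumeLownerProjections}, yields the preliminary estimate $w(P_FK)\geq \sqrt{k/n}\,(|\tilde S_k|/|B_2^k|)^{1/k}$ (respectively $\sqrt{k/n}\,(|B_1^k|/|B_2^k|)^{1/k}$ in the symmetric case): the correct $\sqrt{k/n}$ scaling appears, but with the volume radius of $\tilde S_k$ in place of its mean width.

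To recover the sharp factor $w(\tilde S_k)$ (resp.\ $w(B_1^k)$), I would apply John's theorem to $K$ itself: there exist contact points $u_1,\dots,u_m\in\partial K\cap S^{n-1}$ and positive weights $c_i$ with $\sum_i c_i u_i\otimes u_i=I_n$, and $\sum_i c_i u_i=0$ in the non-symmetric case. The projections $v_i:=P_Fu_i\in P_FK$ inherit $\sum_i c_i v_i\otimes v_i=I_F$ together with the trace identity $\sum_i c_i|v_i|^{2}=k$, so the $v_i$'s have weighted average squared norm $k/n$---this is precisely the source of the factor $\sqrt{k/n}$. Writing $w_i:=v_i/|v_i|\in S_F$ with rescaled weights $c_i|v_i|^{2}$ (summing to $k$), the polytope $\operatorname{conv}(\pm w_i)$ is in L\"owner's position in $F$, so by \cite{ScS95,Sc99} its mean width is at least $w(B_1^k)$ (resp.\ $w(\tilde S_k)$); transferring this back to $\operatorname{conv}(\pm v_i)\subseteq P_FK$ through the average scaling $\sqrt{k/n}$ would give the stated inequality.

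The main obstacle is this last transfer. Since the scalings $|v_i|\in(0,1]$ are non-uniform and satisfy only the weighted mean $\sum_i c_i|v_i|^2=k$, the body $\operatorname{conv}(\pm v_i)$ is not a uniform dilation of $\operatorname{conv}(\pm w_i)$, so the extremal L\"owner bound cannot be pulled back by pure scaling. Extracting the exact factor $\sqrt{k/n}$ requires a Gaussian comparison (Slepian or Gordon--Sudakov--Fernique) applied to the centred Gaussians $\langle v_i,g\rangle\sim N(0,|v_i|^2)$ weighted by the $c_i$, benchmarked against the reference process where each variance is replaced by the weighted average $k/n$; the non-symmetric case additionally exploits the centroid condition $\sum_i c_iv_i=0$ to align with the simplex geometry of $\tilde S_k$.
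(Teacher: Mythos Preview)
Your proposal does not complete the proof: you correctly identify the obstacle at the end, and the suggested fix via Slepian/Sudakov--Fernique does not go through. For a Slepian-type comparison between the processes $X_i=\langle v_i,g\rangle$ and $Y_i=\sqrt{k/n}\,\langle w_i,g\rangle$ you would need, for every pair $i,j$, a one-sided inequality between $\mathbb{E}(X_i-X_j)^2=|v_i|^2-2\langle v_i,v_j\rangle+|v_j|^2$ and $\mathbb{E}(Y_i-Y_j)^2=\tfrac{k}{n}\bigl(2-2\langle w_i,w_j\rangle\bigr)$. Nothing in the constraints $\sum c_i|v_i|^2=k$, $\sum c_i=n$ forces such a pairwise comparison; individual $|v_i|$ can be arbitrarily small (contact points nearly orthogonal to $F$), which already kills the variance inequality for the corresponding indices. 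The ``weighted average variance equals $k/n$'' heuristic is exactly that---a heuristic---and Gaussian comparison theorems do not convert averaged variance conditions into pointwise max comparisons. Your first route (Jensen plus Theorem~\ref{thm:VolumeLownerProjections}) is rigorous but, as you note, gives only the volume radius of $\tilde S_k$ (resp.\ $B_1^k$) instead of its mean width, losing a $\sqrt{\log k}$ factor.

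The paper bypasses the scaling obstacle entirely and never appeals to the extremal results of \cite{ScS95,Sc99} in $F$. In the symmetric case it proves the much stronger \emph{pointwise} Gaussian-measure inequality $\gamma_k\bigl(t(L\cap F)\bigr)\leqslant\gamma_k\bigl(t\sqrt{n/k}\,B_\infty^k\bigr)$ for every $t\geq 0$, obtained by writing $\gamma_k(t(C_0\cap F))$ as a Brascamp--Lieb product, then using log-concavity of $\gamma_1$ together with Cauchy--Schwarz on $\sum_j\delta_j^0 t_j$; integrating $1-\gamma_k(\cdot)$ over $t$ yields the mean-width bound directly. In the non-symmetric case the argument lifts to the cone in $\R^{n+1}$ (the $(v_j,\delta_j)$-decomposition), applies Brascamp--Lieb to a Gaussian-weighted integral over $L\cap H$, and then uses Pr\'ekopa--Leindler to collapse the resulting product to the simplex comparison (this is the content of Lemma~\ref{lem:GaussianMeasure}). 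The sharp constant comes out of the structure of the Brascamp--Lieb/Pr\'ekopa--Leindler step, not from a separate Gaussian-process comparison.
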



For any compact convex set $K\subseteq\R^n$, by Steiner's formula (see \cite[Equation (4.1)]{Sch}) the volume of $K+tB_2^n$ can be expressed as a polynomial in the variable $t$
$$
|K+tB_2^n|=\sum_{i=0}^n{n\choose i}W_i(K)t^i,\quad\forall t\geqslant0,
$$
where the numbers $W_i(K)$ are the so-called querma\ss integrals of $K$. We have that $W_0(K)=|K|$ is the volume of $K$, $nW_1(K)=|\partial K|$ is the surface area of $K$, and $W_{n-1}=|B_2^n|w(K)$, where $w(K)$ is the mean width of $K$. If $K$ is contained in a $k$-dimensional subspace $F\in G_{n,k}$, we can compute its querma\ss integrals in $\R^n$, but also its querma\ss integrals with respect to the subspace $F$, which we identify with $\R^k$. If we denote these querma\ss integrals by $W_i^{(k)}(K)$, for $i=0,\dots,k$, we have that (see e.g.\ \cite[Property~3.1]{SY93})
$$
W_{i}^{(k)}(K)=\frac{{{n}\choose{n-k+i}}}{{{k}\choose{i}}}\frac{|B_2^i|}{|B_{2}^{n-k+i}|}W_{n-k+i}(K),\quad
\forall0\leqslant i\leqslant k,
$$
while $W_i(K)=0$ for all $0\leqslant i<n-k$. In order to avoid the
issue that querma\ss integrals depend on the space where the convex
body is embedded, McMullen \cite{Mc75} defined the  intrinsic volumes
of a compact convex set $K\subseteq\R^n$ as
$$
V_i(K)=\frac{{{n}\choose{i}}}{|B_2^{n-i}|}W_{n-i}(K),\quad
\forall 0\leqslant i\leqslant n.
$$
In \cite{W73} Wills introduced and studied the functional
\begin{equation}\label{e:FWills}
\mathcal{W}(K)=\sum_{i=0}^n V_i(K)
\end{equation}
because of its possible relation with the so-called
lattice-point enumerator $G(K)=\#(K\cap\Z^n)$. In \cite{AHY20}, it was proved that among symmetric convex bodies in John's position, $\mathcal{W}(K)$ is maximized if $K=B_\infty^n$. Here, we prove the following:

\begin{thm}\label{thm:WillsJohnSymmetric}
Let $K\subseteq\R^n$ be a centrally symmetric convex body in John's position. Then, for any $F\in G_{n,k}$ and every $\lambda\geqslant0$,
$$
\mathcal{W}(\lambda(K\cap F))\leqslant \mathcal{W}\left(\lambda\sqrt{\frac{n}{k}}B_\infty^k\right).
$$
\end{thm}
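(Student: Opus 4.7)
The plan is to combine Hadwiger's integral representation $\mathcal{W}(M) = \int_F e^{-\pi d(\cdot,M)^2}\,dx$ (valid for any convex body $M \subset F$) with the geometric Brascamp--Lieb inequality applied to a decomposition of identity on $F$ obtained by projecting John's decomposition on $\R^n$. This extends the strategy used in \cite{AHY20} for the full-dimensional bound $\mathcal{W}(K) \leq \mathcal{W}(B_\infty^n)$. By John's theorem for the centrally symmetric $K$, there exist contact points $u_j \in S^{n-1} \cap \partial K$ and positive weights $c_j$ with $\sum_j c_j u_j \otimes u_j = I_n$ (hence $\sum_j c_j = n$); symmetry forces $K \subset \{x : |\langle x,u_j\rangle| \leq 1\}$. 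Projecting onto $F$, set $v_j = P_F u_j$, $\tilde v_j = v_j/|v_j|$, $\tilde c_j = c_j|v_j|^2$; then $(\tilde v_j,\tilde c_j)$ is a John-type decomposition of $I_F$ on $F$ with $\sum_j \tilde c_j = k$.

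The \emph{key geometric step} is the pointwise inequality
$$
d\bigl(x, \lambda(K \cap F)\bigr)^2 \;\geq\; \sum_j \tilde c_j\, d\!\bigl(\langle x, \tilde v_j\rangle,\, \tfrac{\lambda}{|v_j|}[-1,1]\bigr)^2, \qquad x \in F.
$$
To prove it, let $p$ be the closest point of $\lambda(K \cap F)$ to $x$ and write $y = x-p \in F$; the decomposition identity on $F$ gives $|y|^2 = \sum_j c_j \langle y, v_j\rangle^2$. Because $p \in \lambda K$ and $K$ is symmetric, $|\langle p, u_j\rangle| \leq \lambda$; and since $p \in F$ we may replace $u_j$ by $v_j$ here, so $\langle y, v_j\rangle^2 \geq (|\langle x, v_j\rangle| - \lambda)_+^2$. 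Since $\tilde c_j\, d(\langle x, \tilde v_j\rangle, \tfrac{\lambda}{|v_j|}[-1,1])^2 = c_j(|\langle x, v_j\rangle| - \lambda)_+^2$, summing over $j$ yields the inequality. Exponentiating gives $e^{-\pi d(x, \lambda(K \cap F))^2} \leq \prod_j f_j(\langle x, \tilde v_j\rangle)^{\tilde c_j}$ with $f_j(t) = e^{-\pi d(t, \frac{\lambda}{|v_j|}[-1,1])^2}$, and the geometric Brascamp--Lieb inequality for the $F$-decomposition $(\tilde v_j,\tilde c_j)$ then delivers
$$
\mathcal{W}\bigl(\lambda(K \cap F)\bigr) \;\leq\; \prod_j \Bigl(\int_\R f_j\Bigr)^{\tilde c_j} = \prod_j \Bigl(1 + \tfrac{2\lambda}{|v_j|}\Bigr)^{\tilde c_j}.
$$

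It remains to dominate the right-hand side by $(1 + 2\lambda\sqrt{n/k})^k = \mathcal{W}(\lambda\sqrt{n/k}\,B_\infty^k)$. Weighted AM--GM with weights $\tilde c_j/k$ (recall $\sum_j \tilde c_j = k$) gives $\prod_j (1 + 2\lambda/|v_j|)^{\tilde c_j/k} \leq 1 + (2\lambda/k)\sum_j c_j|v_j|$, while Cauchy--Schwarz together with $\sum_j c_j = n$ and $\sum_j c_j|v_j|^2 = k$ yields $\sum_j c_j|v_j| \leq \sqrt{nk}$; raising to the $k$-th power closes the argument. The main subtlety lies in the geometric inequality above: it relies simultaneously on the two-sided slab containment (hence symmetry of $K$) and on the ability to replace $u_j$ by $v_j$ inside $\langle p,\cdot\rangle$ because $p \in F$. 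These two ingredients are precisely what allow the full-dimensional Brascamp--Lieb proof of \cite{AHY20} to descend to $F$ via the projected John decomposition.
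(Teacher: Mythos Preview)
Your proof is correct and follows essentially the same approach as the paper: project John's decomposition to $F$, establish the pointwise lower bound on $d(x,\lambda(K\cap F))^2$ from the slab constraints $|\langle p,u_j\rangle|\leqslant\lambda$, apply Brascamp--Lieb, then AM--GM together with Cauchy--Schwarz. The only cosmetic difference is that the paper works with the enclosing slab body $C_0\cap F$ and invokes monotonicity of $\mathcal{W}$ at the end, whereas you argue directly with $K\cap F$; you should also note (as the paper does) that one restricts to those $j$ with $P_Fu_j\neq 0$ so that the normalized vectors $\tilde v_j$ are well defined.
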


This theorem will give, as direct consequences, the symmetric cases of Theorem \ref{thm:VolumeJohnSections} and Theorem \ref{thm:MeanWidthJohnSetions}. We also prove the following estimate for the Wills functional of projections of convex bodies in L\"owner's position.
\begin{thm}\label{thm:WillsLownerSymmetric}
Let $K\subseteq\R^n$ be a centrally symmetric convex body in L\"owner's position. Then, for any $F\in G_{n,k}$,
$$
\mathcal{W}(P_F(K))\geqslant \frac{1}{k^{k/2}}.
$$
\end{thm}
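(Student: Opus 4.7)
The plan is to combine the symmetric case of John's theorem, which furnishes a Euclidean ball inside $K$, with the monotonicity of the intrinsic volumes (and hence of the Wills functional) under set inclusion.

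First, since $K$ is centrally symmetric and in L\"owner's position, its polar $K^\circ$ is centrally symmetric and in John's position, so John's theorem in the symmetric case gives $K^\circ\subseteq\sqrt{n}\,B_2^n$. Passing to polars yields the dual containment $K\supseteq(1/\sqrt{n})\,B_2^n$. Since orthogonal projection preserves set inclusions, identifying $F\cong\R^k$ we obtain
$$
P_F(K)\supseteq P_F\big((1/\sqrt{n})\,B_2^n\big)=(1/\sqrt{n})\,B_2^k.
$$

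Next, since every intrinsic volume $V_i$ is a monotone valuation on convex bodies, $V_i(P_F(K))\geqslant V_i\big((1/\sqrt{n})B_2^k\big)=n^{-i/2}V_i(B_2^k)$ for $0\leqslant i\leqslant k$. Summing over $i$ gives
$$
\mathcal{W}(P_F(K))\geqslant\mathcal{W}\big((1/\sqrt{n})B_2^k\big)=\sum_{i=0}^{k}n^{-i/2}V_i(B_2^k).
$$

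Finally, it remains to verify that this sum is at least $k^{-k/2}$. The cleanest route is to retain only the $i=0$ term: since the Euler characteristic gives $V_0(B_2^k)=1$, the sum is at least $1\geqslant k^{-k/2}$ for every $k\geqslant 1$. The main obstacle in this proof is essentially cosmetic, because the target bound $k^{-k/2}\leqslant 1$ is already absorbed by the constant term of the Wills expansion; the substantive geometric input is the containment $P_F(K)\supseteq(1/\sqrt{n})B_2^k$ produced by John's theorem. One can sharpen the estimate by keeping the additional positive terms $n^{-i/2}V_i(B_2^k)$ for $i\geqslant 1$, but the stated form $k^{-k/2}$ is the cleanest companion to Theorem \ref{thm:WillsJohnSymmetric} through the polarity $(P_F(K))^\circ=K^\circ\cap F$.
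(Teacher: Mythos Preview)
Your proof is correct, and in fact your own final observation exposes that the stated bound is trivial: for any non-empty compact convex set $L$ one has $\mathcal{W}(L)\geqslant V_0(L)=1\geqslant k^{-k/2}$, so neither the L\"owner hypothesis nor the containment $P_F(K)\supseteq(1/\sqrt{n})B_2^k$ from John's theorem is actually needed to reach the inequality as written. Your route through John's theorem does yield the genuinely geometric (and stronger) statement $\mathcal{W}(P_F(K))\geqslant\mathcal{W}\big((1/\sqrt{n})B_2^k\big)$, which you might prefer to record instead.

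The paper takes an entirely different path: it works with $K$ in John's position and bounds $\mathcal{W}((K\cap F)^\circ)$ from below via the reverse Brascamp--Lieb inequality applied to the decomposition $I_F=\sum_{j\in J_0}\delta_j^0\,v_j^0\otimes v_j^0$, using convexity of $d(\cdot,(C_0\cap F)^\circ)^2$ to compare with one-dimensional distances. This produces the intermediate estimate
\[
\mathcal{W}((K\cap F)^\circ)\;\geqslant\;\frac{1}{k^{k/2}}\prod_{j\in J_0}\Bigl(1+\tfrac{2\Vert P_F u_j\Vert_2}{\sqrt{k}}\Bigr)^{\delta_j^0},
\]
and only then drops the product to obtain $k^{-k/2}$. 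So the paper's machinery is substantially heavier and encodes finer information about the John decomposition, even though the final stated bound happens to be weaker than the trivial $\mathcal{W}\geqslant 1$; your argument is more elementary and, for the theorem as stated, entirely adequate.
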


The main tool used to obtain most of the estimates above is the fact that a decomposition of the identity operator is associated to any convex body in John's position,
and that this decomposition allows the use of Brascamp-Lieb inequality (see \ref{thm:BLIdentity} below). When $K$ is a polytope in minimal surface area, then there is again a decomposition of the identity associated to $K$ (see Section \ref{subsec:MinimalSurfaceArea}. A similar use of Brascamp-Lieb inequality, together with an approximation by polytopes, will lead to similar estimates for sections of convex bodies in minimal surface area position. Namely, we can prove the following

\begin{thm}\label{thm:SurfaceArea}
Let $K\subseteq\R^n$ be a convex body in minimal surface area position. Then, for any $k$-dimensional linear subspace $F\in G_{n,k}$ we have
\begin{enumerate}
\item[(a)]$|\Pi^{*}K\cap F|\leqslant\frac{4^kn^k}{k!}\frac{1}{\partial(K)^k}$,
\item[(b)]$|P_F(\Pi K)|\geqslant \left(\frac{\partial(K)}{n}\right)^k$.
\end{enumerate}
Furthermore, if $K$ is centrally symmetric, then for any $k$-dimensional linear subspace $F\in G_{n,k}$ we have
\begin{enumerate}
\item[(i)]$\mathcal{W}(K\cap F)\leqslant\mathcal{W}\left(\frac{n^2}{k}\frac{|K|}{\partial(K)}B_\infty^k\right)$,
\item[(ii)]$|K\cap F|^{1/k}\leqslant\frac{n^2}{k}\frac{|K|}{\partial(K)}|B_\infty^k|^{1/k}$,
\item[(iii)]$w(K\cap F)\leqslant \frac{n^2}{k}\frac{|K|}{\partial(K)}w(B_\infty^k)$,
\item[(iv)]$|(K\cap F)^\circ|^{1/k}\geqslant \frac{k}{n^2}\frac{\partial(K)}{|K|}|(B_\infty^k)^\circ|^{1/k}$,
\item[(v)]$w((K\cap F)^\circ)\geqslant \frac{k}{n^2}\frac{\partial(K)}{|K|}w((B_\infty^k)^\circ)$.
\end{enumerate}
\end{thm}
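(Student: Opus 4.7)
The plan is to prove the theorem first for polytopes $P$ in minimal surface area position and extend to general $K$ by an approximation argument on the surface area measure (using Minkowski's existence theorem to realize each approximation as a polytope in the same position). For such a polytope with facets of areas $|F_i|$ and outer unit normals $u_i$, Petty's characterization gives the decomposition of the identity $\sum_i c_i\,u_i\otimes u_i=I_n$ with $c_i=n|F_i|/\partial(P)$. Projecting orthogonally to $F$ yields $\sum_i c_i(P_Fu_i)\otimes(P_Fu_i)=I_F$, and setting $v_i=P_Fu_i/|P_Fu_i|$ and $d_i=c_i|P_Fu_i|^2$ (so that $\sum_i d_i=k$) puts this in the form required by the geometric Brascamp--Lieb inequality on $F\cong\R^k$.

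For part (a), start from the identity $|\Pi^*P\cap F|=\frac{1}{k!}\int_F e^{-h_{\Pi P}(x)}dx$ together with $h_{\Pi P}(x)=\tfrac{1}{2}\sum_i|F_i||\langle x,u_i\rangle|$; this writes the integrand as a product of the form $\prod_i g_i(\langle x,v_i\rangle)^{d_i}$. Brascamp--Lieb factors the integral, and the bounds $|P_Fu_i|\leqslant 1$ and $\sum_i d_i=k$ deliver $(4n/\partial(P))^k/k!$. For part (b), $\Pi P=\tfrac{1}{2}\sum_i|F_i|[-u_i,u_i]$ is a zonotope, so by Cauchy--Binet
$$|P_F(\Pi P)|=\Bigl(\frac{\partial(P)}{n}\Bigr)^{\!k}\sum_{|J|=k}\prod_{j\in J}c_j\,\bigl|\det[P_Fu_j]_{j\in J}\bigr|.$$
The dual Cauchy--Binet identity $\sum_{|J|=k}\prod_{j\in J}c_j\,(\det[P_Fu_j])^2=1$, combined with Hadamard's bound $|\det[P_Fu_j]|\leqslant\prod_{j\in J}|P_Fu_j|\leqslant 1$ (which implies $|\det|\geqslant\det^2$), gives the lower bound $(\partial(P)/n)^k$.

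For the symmetric estimates (i)--(v), pair facets as $\pm u_i$ with distances $r_i=h_P(u_i)$, so that the cone decomposition gives $\sum_i|F_i|r_i=n|P|$. Since $P\cap F+tB_2^k\subseteq\bigcap_i\{x\in F:|\langle x,u_i\rangle|\leqslant r_i+t\}$, Brascamp--Lieb bounds its volume by $\prod_i(2(r_i+t)/|P_Fu_i|)^{d_i}$, and weighted AM--GM with weights $d_i/k$ together with $\sum_i c_i|P_Fu_i|r_i\leqslant\sum_i c_ir_i=n^2|P|/\partial(P)$ produces a Steiner-type domination by $MB_\infty^k+tB_2^k$, with $M=n^2|P|/(k\partial(P))$. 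Integrating against $2\pi te^{-\pi t^2}$ and using Hadwiger's formula $\mathcal{W}(L)=\int_0^\infty 2\pi te^{-\pi t^2}|L+tB_2^k|\,dt$ yields (i); (ii) and (iii) follow by extracting the leading and sub-leading coefficients of the resulting polynomial in $t$. Items (iv) and (v) come from the duality $(K\cap F)^\circ=P_F(K^\circ)$ together with a parallel reverse Brascamp--Lieb (Barthe) argument applied to the polar body. The main obstacle will be distributing the Brascamp--Lieb exponents in the weighted AM--GM so that, after integrating against the Gaussian weight, the one-dimensional Hadwiger integrals $\int_\R e^{-\pi(|t|-r)_+^2}dt=2r+1$ reassemble into exactly $(2M+1)^k=\mathcal{W}(MB_\infty^k)$, rather than a slightly inflated expression; this is precisely the stage at which the cone identity $\sum_i|F_i|r_i=n|P|$ must be invoked, and an analogous care is needed to make the Barthe step in (iv)--(v) produce the matching comparison body $(B_\infty^k)^\circ=B_1^k$.
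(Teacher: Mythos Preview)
Your treatment of (a) and (iv) matches the paper, and your Cauchy--Binet argument for (b) is a nice alternative to the paper's reverse Brascamp--Lieb proof: writing $P_F(\Pi P)$ as a zonotope and combining the identity $\sum_{|J|=k}\prod_{j\in J}c_j(\det[P_Fu_j])^2=\det I_F=1$ with Hadamard's bound $|\det|\leqslant 1$ (so $|\det|\geqslant\det^2$) is both correct and more elementary.

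There is, however, a genuine gap in your route to (i). After Brascamp--Lieb and AM--GM your Steiner-type bound reads
\[
|K\cap F+tB_2^k|\;\leqslant\;(2M+2t)^k\;=\;|(M+t)B_\infty^k|,
\]
with $M=\dfrac{n^2|K|}{k\,\partial(K)}$. But $(2M+2t)^k$ is the volume of $MB_\infty^k+tB_\infty^k$, not of $MB_\infty^k+tB_2^k$. Plugging this into Hadwiger's formula $\mathcal{W}(L)=\int_0^\infty 2\pi t e^{-\pi t^2}|L+tB_2^k|\,dt$ gives
\[
\int_0^\infty 2\pi t e^{-\pi t^2}(2M+2t)^k\,dt
=\sum_{i=0}^k\binom{k}{i}(2M)^{k-i}\,\frac{2^i}{|B_2^i|},
\]
and since $2^i/|B_2^i|>1$ for $i\geqslant 2$, this is strictly larger than $(2M+1)^k=\mathcal{W}(MB_\infty^k)$. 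So the ``slightly inflated expression'' you anticipate is not removable by a different weighted AM--GM: the loss is structural, coming from the mismatch between the $B_\infty^k$ shape in the bound and the $B_2^k$ in Hadwiger's formula. For the same reason, you cannot recover (iii) by comparing the $t^{k-1}$ coefficients of $|K\cap F+tB_2^k|\leqslant(2M+2t)^k$, since a polynomial inequality on $[0,\infty)$ does not force the individual coefficients to compare (only the constant term does, which is why (ii) survives).

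The paper avoids this by applying Brascamp--Lieb \emph{directly} to the Wills integrand. The key extra ingredient is the pointwise distance inequality
\[
\sum_{j\in J_0}\delta_j^0\,d\bigl(\langle x,v_j^0\rangle v_j^0,\;P_{\langle v_j^0\rangle}(K\cap F)\bigr)^2\;\leqslant\;d(x,K\cap F)^2,
\]
obtained by testing against an arbitrary $x_0\in K\cap F$ and using $\sum_j\delta_j^0\langle x-x_0,v_j^0\rangle^2=|x-x_0|^2$. This lets Brascamp--Lieb act on $e^{-\pi d(x,K\cap F)^2}$ and produces the one-dimensional Hadwiger integrals $1+2t_jh_K(u_j)$ \emph{before} the AM--GM step, so that the product reassembles exactly into $(1+2M)^k$. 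The paper then gets (ii) and (iii) by noting that $\lambda K$ is also in minimal surface area position, so (i) holds for every $\lambda>0$, and comparing the extreme coefficients of $\mathcal W(\lambda(K\cap F))\leqslant\mathcal W(\lambda MB_\infty^k)$ as polynomials in $\lambda$.

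Finally, your plan for (v) (``reverse Brascamp--Lieb applied to the polar body'') is too vague and does not obviously produce a lower bound on $w((K\cap F)^\circ)$. The paper's argument goes through the \emph{forward} Brascamp--Lieb inequality to bound $\gamma_k(t(K\cap F))$ from above, then uses log-concavity of the one-dimensional Gaussian to collapse $\prod_j\gamma_1([-tt_jh_K(u_j),tt_jh_K(u_j)])^{\delta_j^0}$ into $\gamma_k(tMB_\infty^k)$; integrating $1-\gamma_k(\cdot)$ over $t$ gives the mean-width inequality.
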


\begin{rmk}
Notice that if $k=n$ then (a) recovers the right-hand side of \eqref{eq:PolarProjectionBody}, (b) recovers the left-hand side of \eqref{eq:ProjectionBody}, (ii) recovers the estimate given by Ball's reverse isoperimetric inequality in \cite{B91} and (iii) recovers the estimate given in \cite[Theorem 7.1]{MPS12}.
\end{rmk}

\section{Preliminaries}\label{sec:Preliminaries}

\subsection{John's position}
As mentioned in the introduction, a convex body is said to be in John's position if the maximal volume ellipsoid contained in it is the Euclidean unit ball. A classical theorem of John \cite{J48} (see also \cite{B92}) states that $K$ is in John's position if and only $B_2^n\subseteq K$ and there exist $m=O(n^2)$ contact points $\{u_j\}_{j=1}^m\subseteq\partial K\cap S^{n-1}$ (the intersection of the boundary of $K$ and the Euclidean unit sphere) and $\{c_j\}_{j=1}^m$ with $c_j>0$ for every $1\leqslant j\leqslant m$ such that
\begin{equation}\label{eq:DecompositionIdentity}
I_n=\sum_{j=1}^mc_ju_j\otimes u_j\hspace{1cm}\textrm{and}\hspace{1cm}\sum_{j=1}^mc_ju_j=0,
\end{equation}
where $I_n$ denotes the identity operator in $\R^n$ and $u_j\otimes u_j(x)=\langle x,u_j\rangle u_j$ for every $x\in\R^n$.

Notice that for any such decomposition of the identity, we have that for every $1\leqslant k\leqslant m$
$$
1=|u_k|^2=\sum_{j=1}^mc_j\langle u_k,u_j\rangle^2\geqslant c_k\langle u_k,u_k\rangle^2=c_k,
$$
and so all the numbers $(c_j)_{j=1}^m$ are in the interval $(0,1]$.

\subsection{Brascamp-Lieb inequality}
We will make use of the Brascamp-Lieb inequality and the reverse Brascamp-Lieb inequality in the following form.

\begin{thm}\label{thm:BLIdentity}
Let $m\geq n$, $\{u_j\}_{j=1}^m\subseteq S^{n-1}$, and $\{c_j\}_{j=1}^m\subseteq(0,\infty)$ be such that $\displaystyle{I_n=\sum_{j=1}^mc_j u_j\otimes u_j.}$
Then, for any  integrable functions $\{f_j\}_{j=1}^m:\R\to[0,\infty)$ we have that
$$
\int_{\R^n}\prod_{j=1}^m f_j^{c_j}(\langle x, u_j\rangle)dx\leqslant\prod_{j=1}^m\left(\int_\R f_j(t)dt\right)^{c_j}.
$$
Besides, for any integrable functions $\{h_j\}_{j=1}^m, h:\R\to[0,\infty)$ verifying that\\ $\displaystyle{h\Big(\sum_{j=1}^m\theta_jc_ju_j\Big)\geqslant\prod_{j=1}^mh_j^{c_j}(\theta_j)}$ for every $\{\theta_j\}_{j=1}^m\subseteq\R$, we have that
$$
\int_{\R^n}h(x)dx\geqslant\prod_{j=1}^m\left(\int_\R h_j(t)dt\right)^{c_j}.
$$
\end{thm}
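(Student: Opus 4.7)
The plan is to reduce to polytopes in minimal surface area position by approximation and then exploit the associated decomposition of the identity $I_n=\sum_{j}c_ju_j\otimes u_j$ with $c_j=na_j/\partial(K)$, where the $u_j$ and $a_j$ are the outer unit normals and areas of the facets. Projecting to a $k$-dimensional subspace $F$ via $v_j=P_Fu_j/|P_Fu_j|$ and $\tilde c_j=c_j|P_Fu_j|^2$ produces the analogous decomposition $I_F=\sum_j\tilde c_jv_j\otimes v_j$ with $\sum_j\tilde c_j=k$, which feeds into Theorem~\ref{thm:BLIdentity}. Setting $b_j=h_K(u_j)/|P_Fu_j|$, the identity $\sum_ja_jh_K(u_j)=n|K|$ together with $|P_Fu_j|\leqslant 1$ yields the central estimate
\[
\sum_j\tilde c_jb_j=\sum_jc_jh_K(u_j)|P_Fu_j|\leqslant\frac{n^2|K|}{\partial(K)},
\]
which will drive every symmetric bound.

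For (a), I apply $|L|=\frac{1}{k!}\int_F e^{-\|x\|_L}dx$ with $L=\Pi^{*}K\cap F$, writing $\|x\|_L=h_{\Pi K}(x)=\tfrac12\sum_ja_j|P_Fu_j|\,|\langle x,v_j\rangle|$. The forward Brascamp-Lieb inequality with $f_j(t)=\exp(-\tfrac{\partial(K)}{2n|P_Fu_j|}|t|)$ (so that $f_j^{\tilde c_j}(\langle x,v_j\rangle)$ reproduces the integrand), combined with $\int f_j=4n|P_Fu_j|/\partial(K)$ and $|P_Fu_j|\leqslant 1$, yields $|L|\leqslant 4^kn^k/(k!\,\partial(K)^k)$. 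For (b), the zonotope $\Pi K=\sum_j\tfrac{a_j}{2}[-u_j,u_j]$ projects to $P_F(\Pi K)=\sum_j\tfrac{a_j|P_Fu_j|}{2}[-v_j,v_j]$; the reverse Brascamp-Lieb with $h=\mathbf{1}_{P_F(\Pi K)}$ and $h_j=\mathbf{1}_{[-r_j,r_j]}$ for $r_j=\partial(K)/(2n|P_Fu_j|)$ (chosen so that $\sum_j\theta_j\tilde c_jv_j\in P_F(\Pi K)$ whenever $|\theta_j|\leqslant r_j$) gives $|P_F(\Pi K)|\geqslant(\partial(K)/n)^k$; alternatively (b) drops out of (a) via Saint Raymond's inequality $|L||L^\circ|\geqslant 4^k/k!$, since $P_F(\Pi K)$ is polar to $\Pi^{*}K\cap F$ in $F$.

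In the symmetric case $K\cap F=\{y\in F:|\langle y,v_j\rangle|\leqslant b_j\,\forall j\}$. Part (ii) follows from forward Brascamp-Lieb with $f_j=\mathbf{1}_{[-b_j,b_j]}$ and weighted AM-GM with weights $\tilde c_j/k$. Part (iii) uses the estimate $h_{K\cap F}(\theta)\leqslant\sum_j\tilde c_jb_j|\langle\theta,v_j\rangle|$ (from $y=\sum_j\tilde c_j\langle y,v_j\rangle v_j$) together with $\int_{S^{k-1}}|\langle\theta,v\rangle|d\sigma=w(B_\infty^k)/k$. Part (iv) follows from (ii) via Saint Raymond, and part (v) is produced analogously to (iii) by lower-bounding $h_{(K\cap F)^\circ}$ through a reverse Brascamp-Lieb applied to the symmetric convex hull $(K\cap F)^\circ=\operatorname{conv}\{\pm v_j/b_j\}$. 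For (i), Hadwiger's formula $\mathcal{W}(L)=\int e^{-\pi d(x,L)^2}dx$ combines with the key pointwise lower bound
\[
d_F(x,K\cap F)^2\geqslant\sum_j\tilde c_j(|\langle x,v_j\rangle|-b_j)_+^2,
\]
forward Brascamp-Lieb with $f_j(t)=e^{-\pi(|t|-b_j)_+^2}$ (one-dimensional integral $1+2b_j$), and Jensen's inequality to give $\mathcal{W}(K\cap F)\leqslant(1+2\lambda)^k=\mathcal{W}(\lambda B_\infty^k)$ with $\lambda=n^2|K|/(k\partial(K))$.

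The hardest step is (i): the Euclidean distance $d_F(x,K\cap F)$ does not split a priori across the directions $v_j$, and the constraint $y\in K\cap F$ is genuinely joint. The decoupling is achieved by expanding $|x-y|^2=\sum_j\tilde c_j\langle x-y,v_j\rangle^2$ through the identity decomposition and relaxing $y\in K\cap F$ to the independent coordinatewise inequalities $|\langle y,v_j\rangle|\leqslant b_j$, whereupon each coordinate minimizes independently. Once this is in place, the Brascamp-Lieb template handles every remaining integral and every symmetric bound funnels through the single arithmetic identity $\sum_jc_jh_K(u_j)=n^2|K|/\partial(K)$.
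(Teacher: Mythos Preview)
Your proposal does not address the stated theorem at all. Theorem~\ref{thm:BLIdentity} is the Brascamp--Lieb inequality (and its reverse) for a decomposition of the identity; it is a foundational inequality which the paper states in the preliminaries \emph{without proof} and then invokes as a tool throughout. A proof of it would require an argument of a completely different nature---optimal transport, heat-flow monotonicity, or the tensor-power/induction strategy---none of which appears in your write-up.

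What you have written instead is a proof sketch for Theorem~\ref{thm:SurfaceArea}, the result on sections of convex bodies in minimal surface area position. Every line of your proposal---the approximation by polytopes, the facet decomposition $c_j=n|F_j|/\partial(K)$, the projected frame $(v_j,\tilde c_j)$, the parts labelled (a), (b), (i)--(v), the identity $\sum_j c_jh_K(u_j)=n^2|K|/\partial(K)$---matches that theorem, and in fact you \emph{use} Theorem~\ref{thm:BLIdentity} as a black box inside the argument. So the proposal is circular with respect to the statement you were asked to prove: it assumes the very inequality it is meant to establish.

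If the intended target was indeed Theorem~\ref{thm:SurfaceArea}, your outline is broadly in line with the paper's own proof, with a couple of deviations: you derive (iv) from (ii) via the Saint-Raymond volume-product inequality rather than by a direct reverse Brascamp--Lieb computation, and your treatment of (v) is too vague (the paper argues via the pointwise Gaussian-measure bound $\gamma_k(t(K\cap F))\leqslant\gamma_k(t\lambda B_\infty^k)$, not by estimating $h_{(K\cap F)^\circ}$). But as a proof of Theorem~\ref{thm:BLIdentity} the proposal is simply off-target.
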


\subsection{The regular simplex}
Let $\Delta_k$ denote the $k$-dimensional regular simplex
$$
\Delta_k=\textrm{conv}\{e_1,\dots,e_{k+1}\}\subseteq H_0,
$$
where $\displaystyle{H_0=\Big\{x\in\R^{k+1}\,:\,\sum_{i=1}^{k+1}x_i=1\Big\}}$ is identified with $\R^k$ and $\left(\frac{1}{k+1},\dots,\frac{1}{k+1}\right)$ is identified with the origin. It is well known that
\begin{itemize}
\item $|\Delta_k|=\frac{\sqrt{k+1}}{k!}$,
\item $r(\Delta_k)=\frac{1}{\sqrt{k(k+1)}}$,
\item $R(\Delta_k)=\sqrt{\frac{k}{k+1}}$,
\item $\Delta_k^\circ=-(k+1)\Delta_k$.
\item $w(\Delta_k)\simeq\sqrt{\frac{\log k}{k}}$,
\end{itemize}
where $a\simeq b$ denotes the fact that there exist two positive absolute constants $c_1,c_2$ such that $c_1a\leqslant b\leqslant c_2a$. Thus, $\frac{1}{r(\Delta_k)}\Delta_k$ is in John's position and $\frac{1}{R(\Delta_k)}\Delta_k$ is in L\"owner's position. Then, if $S_k$ denotes the $k$-dimensional simplex in John's position and $\tilde{S}_k$ denotes the $k$-dimensional simplex in L\"owner's position, we have that
$$
S_k=\sqrt{k(k+1)}\Delta_k\quad\textrm{and}\quad\tilde{S}_k=\sqrt{\frac{k+1}{k}}\Delta_k.
$$
Therefore,
$$
|S_k|^{1/k}=\frac{\sqrt{k(k+1)^{1+\frac{1}{k}}}}{(k!)^{1/k}}\quad\textrm{and}\quad|\tilde{S}_k|^{1/k}=\frac{1}{(k!)^{1/k}}\sqrt{\frac{(k+1)^{1+\frac{1}{k}}}{k}}.
$$
and also
$$
w(S_k)\simeq \sqrt{k\log k}\quad\textrm{and}\quad w(\tilde{S}_k)\simeq \sqrt{\frac{\log k}{k}}.
$$
\subsection{Mean width}

Let $K\subseteq\R^n$ be a convex body. The mean width of $K$ is defined as
$$
w(K)=\int_{S^{n-1}}h_K(\theta)d\sigma(\theta),
$$
where, for every $\theta\in S^{n-1}$,  $h_K(\theta)$ is the support function of $K$ at $\theta$ and $d\sigma$ denotes the uniform probability measure on $S^{n-1}$. If we also assume that $K$ contains the origin in its interior, then $h_K$ is homogeneous of degree $1$ and, integrating in polar coordinates, we have that if $G$ is a standard Gaussian random vector in $\R^n$ then
\begin{eqnarray*}
\E h_K(G)&=&\int_{\R^n}h_K(x) \frac{e^{-\frac{\Vert x\Vert_2^2}{2}}}{(2\pi)^{n/2}}dx=n|B_2^n|\int_0^\infty r^{n}\frac{e^{-\frac{r^2}{2}}}{(2\pi)^{n/2}}\int_{S^{n-1}}h_K(\theta)d\sigma(\theta)\cr
&=&c_n\int_{S^{n-1}}h_K(\theta)d\sigma(\theta)=c_nw(K),
\end{eqnarray*}
where $c_n=\frac{n|B_2^n|\Gamma\left(\frac{n+1}{2}\right)}{\sqrt{2}\pi^{n/2}}=\frac{\sqrt{2}\Gamma\left(\frac{n+1}{2}\right)}{\Gamma\left(\frac{n}{2}\right)}$.
Likewise, since for any convex body containing the origin in its interior the support function of $K^\circ$
is $h_{K^\circ}=\Vert \cdot\Vert_K$, where $\Vert\cdot\Vert_K$ is the Minkowski gauge function of $K$, given by
$$
\Vert x\Vert_K:=\inf\{\lambda>0\,:\,x\in\lambda K\}
$$
for all $x\in\R^n$, we have that if $G$ is a standard Gaussian random vector in $\R^n$
$$
\E \Vert G\Vert_K=c_nw(K^\circ).
$$

\subsection{Log-concave functions}

A function $f\colon \R^n\to[0,\infty)$ is called log-concave if $f(x)=e^{-v(x)}$ where $v:\R^n\to(-\infty,\infty]$ is a convex function. It is well-known that any integrable log-concave function $f\colon \R^n\to[0,\infty)$ is bounded and has moments of all orders. If $K\subseteq\R^n$ is a convex body then its indicator function $\chi_K$ is integrable and log-concave, with integral $|K|$ and if $K$ is a convex body containing the origin, then $e^{-\Vert \cdot\Vert_K}$ is integrable and log-concave, with integral $n!|K|$.

Given a log-concave function $f=e^{-v}$, where $v:\R^n\to(-\infty,\infty]$ is a convex function, its polar function is the function $f^\circ:\R^n\to[0,\infty)$ given by
$$
f^{\circ}(x)=e^{-\mathcal{L}(v)(x)},
$$
where $\mathcal{L}(v)$ denotes the Legendre transform
$$
\mathcal{L}(v)(x)=\sup_{y\in\R^n}(\langle x,y\rangle-v(y)),\quad x\in\R^n.
$$
For more information on log-concave functions we refer the reader to \cite[Chapter 2]{BGVV14}.
\subsection{The Wills functional}\label{subsec:Wills}
Let us recall that for any $n$-dimensional convex body $K$, its Wills functional is defined as
$$
\mathcal{W}(K)=\sum_{i=0}^n V_i(K),
$$
where $V_i(K)$ denotes the $i$-th intrinsic volume of $K$. Many properties of the Wills functional can be found in \cite{W75}, \cite{Ha75}, \cite{Mc91}, or \cite{AHY20}. For our purposes we emphasize the following two:
\begin{enumerate}
\item[(1)](Hadwiger, see \cite[(1.3)]{Ha75}) For any convex body $K\subseteq\R^n$
$$
\mathcal{W}(K)=\int_{\R^n} e^{-\pi d(x,K)^2}dx,
$$
where $d(x, K)$ denotes the Euclidean distance from $x$ to $K$.
\item[(2)](Hadwiger, see \cite[(2.3)]{Ha75}) If $E$ is a linear subspace of $\R^n$, $K_1\subseteq E$ and $K_2\subseteq E^\perp$, then
$$
\mathcal{W}(K_1\times K_2)=\mathcal{W}(K_1)\mathcal{W}(K_2).
$$
\end{enumerate}
In particular, if $K=[-a,a]\subseteq\R$ we have that
$$
\mathcal{W}([-a,a])=2a+2\int_{a}^\infty e^{-\pi (x-a)^2}dx=2a+1
$$
and if $K=aB_\infty^n\subseteq\R^n$ then $\mathcal{W}(aB_\infty^n)=(1+2a)^n$.

Let us point out that for any $\lambda>0$
$$
\mathcal{W}(\lambda K)=\sum_{i=0}^n V_i(\lambda K)=1+\sum_{i=1}^n \lambda^i V_i(K).
$$
Therefore, if two convex bodies $K,L\subseteq\R^n$ verify that $\mathcal{W}(\lambda K)\leqslant\mathcal{W}(\lambda L)$ for every $\lambda\geqslant0$, then one immediately obtains that $V_n(K)\leqslant V_n(L)$ and $V_1(K)\leqslant V_1(L)$ or, equivalently, $|K|\leqslant |L|$ and $w(K)\leqslant w(L)$.

The first property above shows that for any convex body, its Wills functional is the integral of the log-concave function $f_K:\R^n\to[0,\infty)$ given by
$$
f_K(x)=e^{-\pi d(x,K)^2}.
$$

Using a double polarity (both in the convex body and in the family of log-concave functions) we define for any convex body $K\subseteq\R^n$ containing the origin in its interior, the log-concave function $f_{K^\circ}^\circ$. It was proved in \cite[Lemma 3.1]{AHY20} that for every $x\in\R^n$
\begin{equation}\label{eq: PolarFunctionWills}
f_{K^\circ}^\circ(x)=e^{-\frac{\Vert x\Vert_2^2}{4\pi}-\Vert x\Vert_K}.
\end{equation}
The following lemma shows that if, for every $\lambda\geqslant0$, the integral of $f_{(\lambda K)^\circ}^\circ(x)$ is bounded by the integral of $f_{(\lambda L)^\circ}^\circ(x)$, then $|K|\leqslant|L|$.

\begin{lemma}\label{lem:WillsDoublePolarityImpliesVolume}
Let $K,L\subseteq\R^n$ be two convex bodies containing the origin in their interiors. Assume that there exist two numbers $A$  and $\lambda_0>0$ such that,  for any $\lambda\in(0,\lambda_0)$,
$$
\int_{\R^n}f_{(\lambda K)^\circ}^\circ(x)dx\leqslant A\int_{\R^n}f_{(\lambda L)^\circ}^\circ(x)dx.
$$
Then $|K|\leqslant A|L|$.
\end{lemma}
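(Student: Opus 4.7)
The plan is to substitute the explicit formula \eqref{eq: PolarFunctionWills} for $f^\circ_{K^\circ}$ into the hypothesis, rescale so that the parameter $\lambda$ appears only in the Gaussian factor, and then pass to the limit $\lambda\to 0^+$, at which point the integrals reduce to $n!|K|$ and $n!|L|$.

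First I would use the positive homogeneity of the Minkowski gauge to write
$$f^\circ_{(\lambda K)^\circ}(x)=e^{-\frac{\Vert x\Vert_2^2}{4\pi}-\Vert x\Vert_{\lambda K}}=e^{-\frac{\Vert x\Vert_2^2}{4\pi}-\frac{1}{\lambda}\Vert x\Vert_{K}},$$
and similarly for $L$. Performing the linear change of variables $x=\lambda y$ then gives
$$\int_{\R^n}f^\circ_{(\lambda K)^\circ}(x)\,dx=\lambda^n\int_{\R^n}e^{-\frac{\lambda^2\Vert y\Vert_2^2}{4\pi}-\Vert y\Vert_{K}}\,dy,$$
and the analogous identity for $L$. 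Cancelling the common factor $\lambda^n$, the hypothesis becomes
$$\int_{\R^n}e^{-\frac{\lambda^2\Vert y\Vert_2^2}{4\pi}-\Vert y\Vert_K}\,dy\leqslant A\int_{\R^n}e^{-\frac{\lambda^2\Vert y\Vert_2^2}{4\pi}-\Vert y\Vert_L}\,dy,\qquad\forall\,\lambda\in(0,\lambda_0).$$

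Next I would let $\lambda\to 0^+$. For each fixed $y\in\R^n$, the Gaussian prefactor $e^{-\lambda^2\Vert y\Vert_2^2/(4\pi)}$ increases monotonically to $1$ as $\lambda\downarrow 0$, so the integrands on both sides increase monotonically to $e^{-\Vert y\Vert_K}$ and $e^{-\Vert y\Vert_L}$, respectively. Since $K$ and $L$ contain the origin in their interiors, the preliminaries recall that these limits are integrable with
$$\int_{\R^n}e^{-\Vert y\Vert_K}\,dy=n!|K|,\qquad \int_{\R^n}e^{-\Vert y\Vert_L}\,dy=n!|L|.$$
By the monotone convergence theorem, passing to the limit on both sides of the displayed inequality yields $n!|K|\leqslant A\cdot n!|L|$, which is the desired conclusion. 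There is no real obstacle here: the whole content lies in spotting that rescaling moves all $\lambda$-dependence into a Gaussian weight, and that the hypothesized inequality is stable under the elementary monotone limit.
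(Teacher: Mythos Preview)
Your proof is correct and follows essentially the same route as the paper's: substitute \eqref{eq: PolarFunctionWills}, rescale by $x=\lambda y$ to move all $\lambda$-dependence into the Gaussian factor, cancel $\lambda^n$, and let $\lambda\to 0^+$. Your explicit invocation of monotone convergence is a minor refinement over the paper, which simply ``takes the limit'' without naming the justification.
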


\begin{proof}
Notice that for any convex body $K\subseteq\R^n$ containing the origin in its interior and any $\lambda>0$
\begin{eqnarray*}
\int_{\R^n}f_{(\lambda K)^\circ}^\circ(x)dx&=&\int_{\R^n}e^{-\frac{\Vert x\Vert_2^2}{4\pi}}e^{-\Vert x\Vert_{\lambda K}}dx=\int_{\R^n}e^{-\frac{\Vert x\Vert_2^2}{4\pi}}e^{-\frac{\Vert x\Vert_{K}}{\lambda}}dx\cr
&=&\lambda^n\int_{\R^n}e^{-\frac{\lambda^2\Vert x\Vert_2^2}{4\pi}}e^{-\Vert x\Vert_{K}}dx.
\end{eqnarray*}
Therefore, we have that for every $\lambda\in (0,\lambda_0)$
$$
\int_{\R^n}e^{-\frac{\lambda^2\Vert x\Vert_2^2}{4\pi}}e^{-\Vert x\Vert_{K}}dx\leqslant A\int_{\R^n}e^{-\frac{\lambda^2\Vert x\Vert_2^2}{4\pi}}e^{-\Vert x\Vert_{L}}dx
$$
and, taking the limit as $\lambda$ tends to $0$ we obtain that
$$
n!|K|=\int_{\R^n}e^{-\Vert x\Vert_{K}}dx\leqslant A\int_{\R^n}e^{-\Vert x\Vert_{L}}dx=n!A|L|.
$$
\end{proof}
\subsection{Convex bodies in minimal surface area position}\label{subsec:MinimalSurfaceArea}

A convex body $K\subseteq\R^n$ is said to be in minimal surface area position if it has minimal surface area among all of its volume preserving affine images. That is, if
$$
\partial(K)=\min\left\{\partial (TK)\,:\,T\in SL(n)\right\},
$$
where $SL(n)$ denotes the set of non-degenerate linear maps $T\in GL(n)$ with $|\textrm{det}T|=1$. The surface area measure of a convex body $K$ is the measure on the sphere defined by
$$
\sigma_K(A):=\nu\left(\{x\in\partial K\,:\,\nu_K(x)\in A\}\right)\quad\forall A\textrm{ Borel set in }S^{n-1},
$$
where $\nu$ denotes the Hausdorff measure on $\partial K$ and $\nu_K(x)$ is the outer normal vector to $K$ at $x$, which is defined except for a set of measure $\nu$ equal to $0$.

The projection body $\Pi K$ and its polar, the polar projection body $\Pi^*K$, of a convex body $K$ are the centrally symmetric convex bodies defined by
$$
h_{\Pi K}(x)=\Vert x\Vert_{\Pi^*(K)}=|x||P_{x^\perp}K|=\frac{1}{2}\int_{S^{n-1}}|\langle x,\theta\rangle|d\sigma_K(\theta),
$$
where for any $x\neq 0$, $|P_{x^\perp}K|$ denotes the $(n-1)$-dimensional volume of the projection of $K$ onto the hyperplane orthogonal to $x$ and the last equality is the well-known Cauchy's formula (see, for instance, \cite[Equation (5.80)]{Sch}).

It was proved by Petty \cite{P61} (see also \cite{GP99}) that $K$ is in minimal surface area position if and only if $\sigma_K$ is isotropic, i.e., if
$$
I_n=\frac{n}{\partial K}\int_{S^{n-1}}u\otimes ud\sigma_K(u),
$$
and it was observed in \cite{GMR00} that the latter happens if and only if $\Pi K$ is in minimal mean width position, i.e.,
$$
w(\Pi K)=\min\{w(T\Pi K)\,:\,T\in SL(n)\}.
$$
Notice that if $K$ is a polytope with facets $\{F_j\}_{j=1}^m$ with outer normal vectors $\{u_j\}_{j=1}^m$ then the surface area measure of $K$ is
$$
\sigma_K=\sum_{j=1}^m|F_j|\delta_{u_j},
$$
where $\delta_j$ denotes the Dirac delta measure on $u_j$ and $K$ is in minimal surface area position if and only if
$$
I_n=\sum_{j=1}^m\frac{n|F_j|}{\partial(K)}u_j\otimes u_j.
$$
 In particular, if $K$ is a polytope with facets $\{F_j\}_{j=1}^m$ and outer normal vectors $\{u_j\}_{j=1}^m$ then for every $x\in\R^n$
$$
h_{\Pi K}(x)= \Vert x\Vert_{\Pi^*(K)}=\frac{1}{2}\sum_{j=1}^m|F_j||\langle x,u_j\rangle|.
$$
It was proved in \cite{GP99} that, as a consequence of a lemma obtained from the Brascamp-Lieb inequality (see \cite{B91}), if $K$ is a convex body in minimal surface area position then
\begin{equation}\label{eq:PolarProjectionBody}
|B_2^n|\left(\frac{n|B_2^n|}{|B_2^{n-1}|}\right)^n\frac{1}{\partial(K)^n}\leqslant|\Pi^*K|\leqslant\frac{4^nn^n}{n!}\frac{1}{\partial(K)^n}
\end{equation}
and, as a consequence of the Blaschke-Santal\'o inequality and its exact reverse for zonoids (see \cite{GMR88} and \cite{R86}), or as a direct consequence of the reverse form of Brascamp-lieb inequality (see \cite{GMR00}),
\begin{equation}\label{eq:ProjectionBody}
\left(\frac{\partial(K)}{n}\right)^n\leqslant|\Pi K|\leqslant|B_2^n|\left(\frac{|B_2^{n-1}|}{n|B_2^n|}\right)^n\partial(K)^n.
\end{equation}

\section{General setting}\label{sec:GeneralSetting}

In this section we introduce the notation for a setting that will be used in several of our proofs. Let $K\subseteq\R^n$ be a (not necessarily symmetric) convex body. Then there exist $\{u_j\}_{j=1}^m\subseteq \partial K\cap S^{n-1}$ and $\{c_j\}_{j=1}^m\subseteq(0,\infty)$ such that
$$
I_n=\sum_{j=1}^mc_j u_j\otimes u_j,\quad\quad\sum_{j=1}^m c_ju_j=0\quad\textrm{and}\quad\sum_{j=1}^m c_j=n,
$$
where the third equality is obtained from the first one by taking traces. We will denote by $C\subseteq\R^n$ the convex body
$$
C=\{x\in\R^n\,:\, \langle x, u_j\rangle\leqslant 1,\,\forall 1\leqslant j\leqslant m\}.
$$
It is easily verified that $K\subseteq C$. We will denote, for every $1\leqslant j\leqslant m$,
\begin{itemize}
\item $v_j=\sqrt{\frac{n}{n+1}}(-u_j,\frac{1}{\sqrt{n}})\in S^{n}$, and
\item $\delta_j=\frac{n+1}{n}c_j\in(0,1]$.
\end{itemize}
These vectors satisfy
$$
I_{n+1}=\sum_{j=1}^m\delta_j v_j\otimes v_j,\quad\quad\sum_{j=1}^m \delta_jv_j=\left(0,\sqrt{n+1}\right)\quad\textrm{and}\quad\sum_{j=1}^m \delta_j=n+1.
$$
We will denote by $L$ the cone
$$
L=\{y=(x,r)\in\R^{n+1}\,:\,\langle y, v_j\rangle\geq0,\,\forall\, 1\leqslant j\leqslant m\}.
$$
The next lemma relates $L$ and $C$.
\begin{lemma}\label{lem:ConeL}
Let $K\subseteq\R^n$ be a convex body in John's position and let $L$ be defined as above. Then
$$
L=\left\{(x,r)\in \R^{n+1}\,:\,r\geqslant 0, x\in \frac{r}{\sqrt{n}}C\right\}.
$$
\end{lemma}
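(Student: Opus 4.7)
The strategy is a direct unpacking of the definitions, using the balance condition $\sum_j c_j u_j = 0$ (equivalently, the last-coordinate identity $\sum_j \delta_j v_j = (0,\sqrt{n+1})$) only to pin down the sign of the last coordinate of a point in $L$.

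First I would compute the inner product defining $L$ explicitly. Since $v_j = \sqrt{n/(n+1)}(-u_j, 1/\sqrt{n})$, for any $(x,r) \in \R^{n+1}$ one has
\[
\langle (x,r), v_j\rangle \;=\; \sqrt{\tfrac{n}{n+1}}\Bigl(-\langle x,u_j\rangle + \tfrac{r}{\sqrt{n}}\Bigr),
\]
so $(x,r) \in L$ is equivalent to the system of inequalities $\langle x, u_j\rangle \le r/\sqrt{n}$ for every $1\le j\le m$.

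Next I would handle the sign of $r$. If $r < 0$, then every $\langle x, u_j\rangle < 0$, so $\sum_j c_j \langle x, u_j\rangle < 0$; but this sum equals $\langle x, \sum_j c_j u_j\rangle = 0$ by the balance condition, a contradiction. Hence any $(x,r) \in L$ satisfies $r \ge 0$. When $r > 0$, dividing through by $r/\sqrt{n}$ shows that the system of inequalities is exactly $\frac{\sqrt{n}}{r} x \in C$, i.e.\ $x \in \frac{r}{\sqrt{n}} C$. For the boundary case $r=0$, the inequalities read $\langle x, u_j\rangle \le 0$ for all $j$; summing with the positive weights $c_j$ and using $\sum_j c_j u_j = 0$ forces each $\langle x,u_j\rangle = 0$, and then the identity decomposition $I_n = \sum_j c_j u_j\otimes u_j$ yields $|x|^2 = \sum_j c_j \langle x,u_j\rangle^2 = 0$, so $x=0 \in \frac{0}{\sqrt{n}} C$.

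The reverse inclusion is immediate: if $r \ge 0$ and $x \in (r/\sqrt{n}) C$, then $\langle x, u_j\rangle \le r/\sqrt{n}$ for every $j$, hence $\langle (x,r), v_j\rangle \ge 0$. The only subtlety is the $r=0$ case, where the balance condition together with the identity decomposition is what rules out nontrivial points in $L$ at height zero; this is the one place the specific structure of John's decomposition is used, and I expect this to be the main (minor) obstacle — the rest is pure bookkeeping.
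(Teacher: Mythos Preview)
Your proof is correct and follows essentially the same route as the paper: compute $\langle (x,r),v_j\rangle$ explicitly, rule out $r<0$ via the balance condition $\sum_j c_j u_j=0$, and identify the remaining inequalities with the definition of $\tfrac{r}{\sqrt{n}}C$. The only difference is that you treat the boundary case $r=0$ explicitly (using the identity decomposition to force $x=0$), whereas the paper absorbs this into the statement ``$\langle x,u_j\rangle\le r/\sqrt{n}$ for all $j$ iff $x\in\tfrac{r}{\sqrt{n}}C$'' without comment; your extra care here is harmless and arguably cleaner.
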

\begin{proof}
Let $y=(x,r)\in L$. By the definition of $v_j$ we have that for each $1\leqslant j\leqslant m$
$$
\langle y,v_j\rangle =-\sqrt{\frac{n}{n+1}}\langle x,u_j\rangle+\frac{r}{\sqrt{n+1}}.
$$
Assume that $r<0$. Then, since  $\langle y,v_j\rangle \geqslant 0$ for every $1\leqslant j\leqslant m$ we have that
$$
-\sqrt{\frac{n}{n+1}}\langle x,u_j\rangle+\frac{r}{\sqrt{n+1}}\geqslant 0\quad\forall 1\leqslant j\leqslant m
$$
and then $\langle x,u_j\rangle<0$ for every $1\leqslant j\leqslant m$. As a consequence, since $\{c_j\}_{j=1}^m\subseteq(0,\infty)$,
$$
\sum_{j\in J} c_j\langle x,u_j\rangle <0,
$$
which contradicts the fact that $\displaystyle{\sum_{j=1}^mc_ju_j=0}$. Therefore, if $y=(x,r)\in L$ then $r\geqslant 0$ and for any $r\geqslant 0$ we have $\langle y,v_j\rangle \geqslant 0$ for every $1\leqslant j\leqslant m$ if and only if
$\langle x,u_j\rangle\leqslant\frac{r}{\sqrt{n}}$ for every $1\leqslant j\leqslant m$.
The latter condition is true if and only if $x\in \frac{r}{\sqrt{n}}C$.

Conversely, if $y=(x,r)$ verifies that $r\geqslant0$ and $x\in\frac{r}{\sqrt{n}}C$, which happens if and only if $\langle x,u_j\rangle\leqslant\frac{r}{\sqrt{n}}$ for every $1\leqslant j\leqslant m$, then for every $1\leqslant j\leqslant m$
$$
\langle y, v_j\rangle=-\sqrt{\frac{n}{n+1}}\langle x,u_j\rangle+\frac{r}{\sqrt{n+1}}\geqslant 0
$$
and then $y\in L$.
\end{proof}
Given any $k$-dimensional affine subspace $F$ in $\R^n$, we will consider the linear $(k+1)$-dimensional subspace in $\R^{n+1}$
$$
H=\textrm{span}\{\left(x,\sqrt{n}\right)\,:\,x\in F\}.
$$
Notice that if $F\in G_{n,k}$ is a linear subspace then $H=F\times\R$. We set $J=\{1\leqslant j\leqslant m\,:\,P_H v_j\neq0\}$ and, for every $j\in J$, we define
\begin{itemize}
\item $w_j=\frac{P_H v_j}{\Vert P_Hv_j\Vert_2}$,
\item $\kappa_j=\delta_j\Vert P_Hv_j\Vert_2^2=\frac{n+1}{n}c_j\Vert P_H v_j\Vert_2^2$.
\end{itemize}
Then, we have that
$$
I_H=\sum_{j\in J}\kappa_jw_j\otimes w_j\quad\textrm{and}\quad\sum_{j\in J}\kappa_j=k+1,
$$
where $I_H$ denotes the identity in the linear subspace $H$. Furthermore, if for every $j\in J$ we denote by $s_j=\frac{1}{\Vert P_H v_j\Vert_2}$ we have that for every $y=(x,r)\in H\subseteq\R^{n+1}$
\begin{align}\label{eq:SumInH}
\sum_{j\in J}\kappa_j s_j\langle y, w_j\rangle&=\sum_{j\in J}\delta_j\langle y,P_H v_j\rangle=\sum_{j=1}^m\delta_j\langle y,P_H v_j\rangle=\sum_{j=1}^m\delta_j\langle y,v_j\rangle\\
\nonumber &=r\sqrt{n+1}.
\end{align}
The following lemma shows that if $F\in G_{n,k}$ is a linear subspace, and if $H=F\times\R$, then we have that $J=\{1,\dots, m\}$ and there is a lower bound for the Euclidean norm of $P_Hv_j$ for every $1\leqslant j\leqslant m$.

\begin{lemma}\label{lem:LowerBoundLinear}
Let $\{u_j\}_{j=1}^m\subseteq S^{n-1}$, $\{c_j\}_{j=1}^m$ be such that \eqref{eq:DecompositionIdentity} holds, $F\in G_{n,k}$, $H=F\times\R\in G_{n+1,k+1}$, and $\{v_j\}_{j=1}^m\in S^{n}$ be defined as above. Then, for every $1\leqslant j\leqslant m$ we have
$$
\frac{1}{n+1}\leqslant \Vert P_H v_j\Vert_2^2\leqslant 1.
$$
\end{lemma}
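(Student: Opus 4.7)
The plan is to prove both bounds by direct computation, using the specific form of $v_j$ and the product structure $H=F\times\mathbb{R}$.

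First I would observe that since $H=F\times\mathbb{R}$ with $F\subseteq\mathbb{R}^n$, the projection $P_H$ acts coordinatewise: it preserves the last coordinate entirely and projects the first $n$ coordinates via $P_F$. Since $v_j=\sqrt{\tfrac{n}{n+1}}\bigl(-u_j,\tfrac{1}{\sqrt{n}}\bigr)$, its first $n$ coordinates are $-\sqrt{\tfrac{n}{n+1}}\,u_j$ and its last coordinate is $\sqrt{\tfrac{n}{n+1}}\cdot\tfrac{1}{\sqrt{n}}=\tfrac{1}{\sqrt{n+1}}$. Therefore
$$
P_H v_j=\Bigl(-\sqrt{\tfrac{n}{n+1}}\,P_F u_j,\ \tfrac{1}{\sqrt{n+1}}\Bigr),
$$
and taking squared Euclidean norms gives the key identity
$$
\Vert P_H v_j\Vert_2^2=\frac{n}{n+1}\Vert P_F u_j\Vert_2^2+\frac{1}{n+1}.
$$

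From this identity both bounds are immediate. The lower bound $\Vert P_H v_j\Vert_2^2\geqslant\tfrac{1}{n+1}$ follows because $\Vert P_F u_j\Vert_2^2\geqslant 0$, with equality exactly when $u_j\perp F$. The upper bound $\Vert P_H v_j\Vert_2^2\leqslant 1$ follows because $\Vert P_F u_j\Vert_2^2\leqslant\Vert u_j\Vert_2^2=1$ (as $P_F$ is an orthogonal projection and $u_j\in S^{n-1}$), which gives $\tfrac{n}{n+1}+\tfrac{1}{n+1}=1$; alternatively the upper bound is automatic from $v_j\in S^n$ and $P_H$ being an orthogonal projection.

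There is no real obstacle here; the statement is essentially a bookkeeping lemma that isolates the quantitative feature of the $v_j$ that is needed downstream, namely that $P_Hv_j$ has a uniformly positive last coordinate $\tfrac{1}{\sqrt{n+1}}$. It is worth flagging explicitly that the lower bound uses nothing about John's decomposition beyond $u_j\in S^{n-1}$, and in particular already implies $J=\{1,\dots,m\}$, as stated in the surrounding text.
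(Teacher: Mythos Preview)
Your proof is correct and follows essentially the same idea as the paper's: both exploit that the last coordinate $\tfrac{1}{\sqrt{n+1}}$ of $v_j$ lies entirely in $H$ and therefore survives the projection. The paper phrases this via an orthogonal decomposition $P_H v_j = c + P_H(v_j-c)$ with $c=(0,\tfrac{1}{\sqrt{n+1}})\in H$, while you compute coordinates directly and obtain the explicit identity $\Vert P_H v_j\Vert_2^2=\tfrac{n}{n+1}\Vert P_F u_j\Vert_2^2+\tfrac{1}{n+1}$; the two arguments are equivalent.
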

\begin{proof}
Let $c=\left(0,\frac{1}{\sqrt{n+1}}\right)\in H$ and notice that for every $1\leqslant j\leqslant m$
$$
\langle P_H(v_j-c),c\rangle=\langle v_j-c,c\rangle=\frac{1}{n+1}-\frac{1}{n+1}=0
$$
and then, since $c\in H$,
$$
\Vert P_H v_j\Vert_2^2=\Vert c+(P_H(v_j-c))\Vert_2^2=\Vert c\Vert_2^2+\Vert P_H(v_j-c)\Vert_2^2\geq\Vert c\Vert_2^2=\frac{1}{n+1}.
$$
Thus, for every $1\leqslant j\leqslant m$ we have that
$$
\frac{1}{n+1}\leqslant \Vert P_H v_j\Vert_2^2\leqslant 1.
$$
\end{proof}

If $K\subseteq\R^n$ is a centrally symmetric convex body in John's position then we will also denote by $C_0$ the symmetric convex body
$$
C_0=\{x\in\R^n\,:\, |\langle x, u_j\rangle|\leqslant 1,\,\forall \,1\leqslant j\leqslant m\},
$$
which verifies that $K\subseteq C_0$. If $F\in G_{n,k}$ is a linear subspace, we set $J_0=\{1\leqslant j\leqslant m\,:\,P_F u_j\neq0\}$ and for every $j\in J_0$, we define
\begin{itemize}
\item $v_j^0=\frac{P_F u_j}{\Vert P_F u_j\Vert_2}\in S_F$,
\item $\delta_j^0= c_j\Vert P_F u_j\Vert_2^2$.
\end{itemize}
Then, we have that
$$
I_F=\sum_{j\in J_0}^m\delta_j^0 v_j^0\otimes v_j^0\hspace{1cm}\textrm{and}\hspace{1cm}\sum_{j\in J_0}\delta_j^0=k,
$$
where $I_F$ denotes the identity operator in $F$, and also
\begin{eqnarray*}
K\cap F\subseteq C_0\cap F&=&\{x\in F\,:\, |\langle x, u_j\rangle|\leqslant 1,\,\forall 1\leqslant j\leqslant m\}\cr
&=&\{x\in F\,:\, |\langle x, P_Fu_j\rangle|\leqslant 1,\,\forall 1\leqslant j\leqslant m\}\cr
&=&\{x\in F\,:\, |\langle x, P_Fu_j\rangle|\leqslant 1,\,\forall j\in J_0\}\cr
&=&\left\{x\in F\,:\, |\langle x, v_j^0\rangle|\leqslant t_j,\,\forall j\in J_0\right\},\cr
\end{eqnarray*}
where for every $j\in J_0$ we have set $t_j=\frac{1}{\Vert P_F u_j\Vert_2}=\left(\frac{c_j}{\delta_j^0}\right)^{1/2}$. Furthermore,
\begin{eqnarray*}
(K\cap F)^\circ&\supseteq&(C_0\cap F)^\circ=P_F(C_0^\circ)=P_F\left(\textrm{conv}\{\pm u_j\,:\,1\leqslant j\leqslant m\}\right)\cr
&=&\textrm{conv}\{\pm P_F u_j \,:\,1\leqslant j\leqslant m\}\cr
&=&\textrm{conv}\{\pm P_F u_j \,:\,j\in J_0\}.
\end{eqnarray*}

Let $K$ be a (not necessarily centrally symmetric) polytope in minimal surface area position with facets $\{F_j\}_{j=1}^m$ and outer normal vectors $\{u_j\}_{j=1}^m$, and let $F\in G_{n,k}$ be a $k$-dimensional linear subspace. Then,
$$
K=\{x\in\R^n\,:\,\langle x, u_j\rangle\leqslant h_K(u_j),\,\forall\,1\leqslant j\leqslant m\}
$$
and
$$
I_n=\sum_{j=1}^m\frac{n|F_j|}{\partial(K)}u_j\otimes u_j=\sum_{j=1}^mc_ju_j\otimes u_j,
$$
where $c_j=\frac{n|F_j|}{\partial(K)}$ for every $1\leqslant j\leqslant m$. Besides (see, for instance, \cite[Theorem 18.2]{Gr07})
$$
\sum_{j=1}^mc_ju_j=\frac{n}{\partial(K)}\sum_{j=1}^m|F_j|u_j=0.
$$
and
\begin{equation}\label{eq:VolumeMinimalSurfaceArea}
\sum_{j=1}c_jh_K(u_j)=\sum_{j=1}^m\frac{n|F_j|}{\partial(K)}h_K(u_j)=\frac{n^2|K|}{\partial(K)}.
\end{equation}
Note also that if $K$ is a centrally symmetric polytope in minimal surface area position, with facets $\{F_j\}_{j=1}^m$ and outer normal vectors $\{u_j\}_{j=1}^m$, and if $F\in G_{n,k}$ is a $k$-dimensional linear subspace, then
$$
K=\{x\in\R^n\,:\,|\langle x, u_j\rangle|\leqslant h_K(u_j),\,\forall\,1\leqslant j\leqslant m\}.
$$
As in the case where the decomposition of the identity comes from a centrally symmetric convex body in John's position, we set $J_0=\{1\leqslant j\leqslant m\,:\,P_Fu_j\neq0\}$ and, for every $j\in J_0$, we define
\begin{itemize}
\item  $v_j^0=\frac{P_Fu_j}{\Vert P_F u_j\Vert_2}$,
\item $\delta_j^0=c_j\Vert P_F u_j\Vert_2^2=\frac{n|F_j|\|P_Fu_j\|_2^2}{\partial(K)}$.
\end{itemize}
We have that
$$I_F=\sum_{j=1}^mc_jP_Fu_j\otimes P_Fu_j=\sum_{j\in J_0}\delta_j^0v_j^0\otimes v_j^0.$$
Besides, if we denote $t_j=\frac{1}{\Vert P_Fu_j\Vert_2}=\left(\frac{c_j}{\delta_j^0}\right)^{1/2}$ for every $j\in J_0$, then
$$
K\cap F=\{x\in F\,:\,|\langle x,v_j^0\rangle|\leqslant t_jh_K(u_j),\forall\, j\in J_0\}
$$
and
\begin{eqnarray*}
(K\cap F)^\circ&=&\textrm{conv}\left\{\pm\frac{P_F(u_j)}{h_K(u_j)}\,:\,1\leqslant j\leqslant m\right\}\cr
&=&\textrm{conv}\left\{\pm\frac{P_F(u_j)}{h_K(u_j)}\,:\,j\in J_0\right\}.\cr
\end{eqnarray*}
\section{Volume of sections of convex bodies in John's position}\label{sec:VolumeSections}

In this section we will give the proof of Theorem \ref{thm:VolumeJohnSections}.

\begin{proof}[Proof of Theorem \ref{thm:VolumeJohnSections}.] Let us first start with the symmetric case. Assume that $K$ is a centrally symmetric convex body in John's position and $F\in G_{n,k}$ is a linear $k$-dimensional subspace. Following the notation in Section \ref{sec:GeneralSetting}, we have that

$$
K\cap F\subseteq C_0\cap F=\left\{x\in F\,:\, |\langle x, v_j^0\rangle|\leqslant t_j,\,\forall j\in J_0\right\},
$$
where for every $j\in J_0$ we have denoted $t_j=\frac{1}{\Vert P_F u_j\Vert_2}=\left(\frac{c_j}{\delta_j^0}\right)^{1/2}$. Therefore, by
the Brascamp-Lieb inequality,
\begin{eqnarray*}
|K\cap F|&\leqslant&|C_0\cap F|=\int_F\prod_{j\in J_0}\chi_{[-t_j,t_j]}(\langle x,v_j^0\rangle)dx=\int_F\prod_{j\in J_0}\chi_{[-t_j,t_j]}^{\delta_j^0}(\langle x,v_j^0\rangle)dx\cr
&\leqslant&\prod_{j\in J_0}\left(\int_{\R}\chi_{[-t_j,t_j]}(t)\right)^{\delta_j^0}=\prod_{j\in J_0}(2t_j)^{\delta_j^0}=2^k\prod_{j\in J_0}\left(\frac{c_j}{\delta_j^0}\right)^{\frac{\delta_j^0}{2}}.
\end{eqnarray*}
By the arithmetic-geometric mean inequality we get
$$
\prod_{j\in J_0}\left(\frac{c_j}{\delta_j^0}\right)^{\frac{\delta_j^0}{k}}\leqslant\sum_{j\in J_0}\frac{\delta_j^0}{k}\frac{c_j}{\delta_j^0}=\frac{1}{k}\sum_{j\in J_0}c_j\leqslant\frac{1}{k}\sum_{j=1}^mc_j=\frac{n}{k},
$$
and hence,
$$
|K\cap F|^{1/k}\leqslant 2\sqrt{\frac{n}{k}}=\sqrt{\frac{n}{k}}|B_\infty^k|^{1/k}.
$$
Assume now that $K\subseteq\R^n$ is a (not necessarily symmetric) convex body in John's position and $F\in G_{n,k}$ a linear $k$-dimensional subspace. Following the notation introduced in Section \ref{sec:GeneralSetting} we have that if we take $H=F\times\R\in G_{n+1,k+1}$ then
$$
L\cap H=\left\{(x,r)\in F\times\R\,:\,r\geqslant 0, x\in \frac{r}{\sqrt{n}}(C\cap F)\right\}.
$$
Denoting $s_j=\frac{1}{\Vert P_Hv_j\Vert_2}$  we have, by the Brascamp-Lieb inequality, that
\begin{eqnarray*}
\int_{L\cap H} e^{-\sum_{j=1}^m\kappa_js_j\langle y,w_j\rangle}dy&=&\int_H\prod_{j=1}^m\left(\chi_{[0,\infty)}(\langle y,v_j\rangle)e^{-s_j\langle y,w_j\rangle}\right)^{\kappa_j}dy\cr
&=&\int_H\prod_{j=1}^m\left(\chi_{[0,\infty)}(\langle y,P_Hv_j\rangle)e^{-s_j\langle y,w_j\rangle}\right)^{\kappa_j}dy\cr
&=&\int_H\prod_{j=1}^m\left(\chi_{[0,\infty)}(\langle y,w_j\rangle)e^{-s_j\langle y,w_j\rangle}\right)^{\kappa_j}dy\cr
&\leqslant&\prod_{j=1}^m\left(\int_0^\infty e^{-s_jt}dt\right)^{\kappa_j}\cr
&=&\prod_{j=1}^m\Vert P_Hv_j\Vert_2^{\delta_j\Vert P_H v_j\Vert_2^2}.
\end{eqnarray*}
On the other hand, taking into account \eqref{eq:SumInH} we see that
\begin{eqnarray*}
\int_{L\cap H} e^{-\sum_{j=1}^m\kappa_js_j\langle y,w_j\rangle}dy&=&\int_0^\infty\int_{\frac{r}{\sqrt{n}}(C\cap F)}e^{-r\sqrt{n+1}}dxdr\cr
&=&\int_0^\infty \frac{r^k}{n^{\frac{k}{2}}}|C\cap F|e^{-\sqrt{n+1}r}dr\cr
&=&\frac{k!}{n^\frac{k}{2}(n+1)^\frac{k+1}{2}}|C\cap F|\cr
&=&\frac{k^\frac{k}{2}(k+1)^\frac{k+1}{2}}{n^\frac{k}{2}(n+1)^\frac{k+1}{2}}\frac{|C\cap F|}{|S_k|}.
\end{eqnarray*}
Let us maximize $\prod_{j=1}^m\Vert P_Hv_j\Vert_2^{\delta_j\Vert P_H v_j\Vert_2^2}$ under the constraints
\begin{itemize}
\item $ \frac{1}{n+1}\leqslant \Vert P_H v_j\Vert_2^2\leqslant 1$ $\forall 1\leqslant j\leqslant m$,
\item $\sum_{j=1}^m \delta_j\Vert P_H v_j\Vert_2^2=k+1$,
\item $\sum_{j=1}^m\delta_j=n+1$,
\item $0\leqslant\delta_j\leqslant1$.
\end{itemize}
Equivalently, let us maximize $F(x,\delta)=\frac{1}{2}\sum_{j=1}^m\delta_jx_j\log x_j$ under the constraints
\begin{itemize}
\item $ \frac{1}{n+1}\leqslant x_j\leqslant 1$ $\forall 1\leqslant j\leqslant m$,
\item $\sum_{j=1}^m \delta_jx_j=k+1$,
\item $\sum_{j=1}^m\delta_j=n+1$,
\item $0\leqslant\delta_j\leqslant1$.
\end{itemize}
First notice that the function $F(x,\delta)$ is continuous on the compact domain in $M\subseteq\R^{2m}$ given by the constraints and, therefore, it  attains its maximum. For every $x=(x_1,\dots, x_m)$ with $\frac{1}{n+1}\leqslant x_j\leqslant 1$ with $1\leqslant j\leqslant m$, for all $1\leqslant j\leqslant m$, let $F_x(\delta)$ be the function
$$
F_x(\delta)=\frac{1}{2}\sum_{j=1}^m\delta_jx_j\log x_j.
$$
Notice that $F_x$ is a convex function and then, since the set
$$
A=\Big\{\delta\in\R^m\,:\,\sum_{j=1}^m \delta_jx_j=k+1,\,\sum_{j=1}^m\delta_j=n+1,\,0\leqslant\delta_j\leqslant1\,\forall 1\leqslant j\leqslant m\Big\}
$$
is a compact convex set, $F_x$ attains its maximum on some extreme point of $A$. These are the points of intersection of the $2$-dimensional faces of the cube $\{\delta\in\R^m\,:\,0\leqslant \delta_j\leqslant 1\,\;\forall 1\leqslant j\leqslant m\}$ with the $(m-2)$-dimensional affine subspace
$$
\Big\{\delta\in\R^m\,:\,\sum_{j=1}^m \delta_jx_j=k+1,\,\sum_{j=1}^m\delta_j=n+1\Big\}.
$$
Therefore, a maximizer of the function $F_x$ has to be a point of the form
$$\delta_\lambda=(\underbrace{1,1,\ldots,1}_{n },\lambda, 1-\lambda,\underbrace{0,\dots,0}_{m-n-2})$$
for some $\frac{1}{2}\leqslant\lambda\leqslant1$ (or a permutation of it) such that $\sum_{j=1}^m \delta_jx_j=k+1$ is satisfied. Let us find, for every  $\delta_\lambda$ with $\frac{1}{2}\leqslant\lambda\leqslant1$, the maximizer of the function
$$
F_{\delta_\lambda}(x)=\frac{1}{2}\sum_{j=1}^m\delta_jx_j\log x_j
$$
on the compact convex set
$$
B_\lambda=\Big\{x\in\R^m\,:\,\sum_{j=1}^m \delta_{\lambda,j}x_j=k+1,\,\frac{1}{n+1}\leqslant x_j\leqslant 1\;\;\forall 1\leqslant j\leqslant m\Big\}.
$$
We can assume, without loss of generality, that $\delta_\lambda=\delta_\lambda^*$, where $\delta_\lambda^*$ is the decreasing rearrangement of $\delta_\lambda$. Let $$C=\frac{k+1}{n+1}$$ and
$$
\tilde{x}=\left(\underbrace{1,1,\ldots,1}_{k },C,\underbrace{\frac{1}{n+1},\ldots,\frac{1}{n+1}}_{m-k-1}\right).
$$
We check that $\frac{1}{n+1}\leqslant C\leqslant1$. Moreover,
$$
\sum_{j=1}^m\delta_{\lambda, j}\tilde{x}_j=k+C+\frac{n-k}{n+1}=k+1
$$
and for every $x=(x_1,\dots,x_m)\in B_\lambda$, since the first $k+1$ coordinates of $\tilde{x}$ are as large as they can,
we have $\tilde{x}\succ (x_1,\ldots, x_m)$, i.e.,
\begin{itemize}
\item $\displaystyle{\sum_{j=1}^m\delta_{\lambda,j}\tilde{x}_j=\sum_{j=1}^m\delta_{\lambda, j}x_j=k+1}$,
\item $\displaystyle{\sum_{j=1}^l\delta_{\lambda,j}\tilde{x}_j\geqslant\sum_{j=1}^l\delta_{\lambda, j}x_j\,\;\forall 1\leqslant l\leqslant m}$.
\end{itemize}
Therefore, by the weighted Karamata's inequality we have that, for every $x\in B_\lambda$,
$$F_{\delta_\lambda}(x)\leqslant F_{\delta_\lambda}(\tilde{x})\leqslant\max_{(\delta,x)\in M}F(\delta,x)$$
and then, since
$$
\max_{(\delta,x)\in M}F(\delta,x)\leqslant\max_{\lambda\in[\frac{1}{2},1],x\in B_\lambda}F_{\delta_\lambda}(x)
$$
we see that
\begin{eqnarray*}
\max_{(\delta,x)\in M}F(\delta,x)&=&\max_{\lambda\in[\frac{1}{2},1]} F_{\delta_\lambda}(\tilde{x})=\max_{\lambda\in[\frac{1}{2},1]}\left\{\frac{1}{2}C\log C+\frac{n-k}{2(n+1)}\log\left(\frac{1}{n+1}\right)\right\}\cr
&=&\frac{1}{2}C\log C-\frac{n-k}{n+1}\log(n+1).
\end{eqnarray*}
Thus,
$$
\prod_{j=1}^m\Vert P_Hv_j\Vert_2^{\delta_j\Vert P_H v_j\Vert_2^2}\leqslant e^{\frac{1}{2}C\log C-\frac{n-k}{2(n+1)}\log(n+1)}=\frac{\left(k+1\right)^\frac{k+1}{2(n+1)}}{(n+1)^\frac{n+1}{2(n+1)}}.
$$
Therefore, since $|K\cap F|\leq|C\cap F|$ we have that
$$
|K\cap F|^{1/k}\leqslant\frac{1}{(k+1)^{\frac{n-k}{2k(n+1)}}}\sqrt{\frac{n}{k}\frac{(n+1)}{(k+1)}}|S_k|^{1/k}.
$$
Finally, assume now that $K\subseteq\R^n$ is a (not necessarily symmetric) convex body in John's position and $F$ is a $k$-dimensional affine subspace at distance $d$ from $0$. Following the notation introduced in Section \ref{sec:GeneralSetting}, given the $k$-dimensional affine subspace $F$, if we take the linear subspace $H=\textrm{span}\{(x,\sqrt{n})\,:\,x\in F_1\}\in G_{n+1,k+1}$ we have that
$$
L\cap H=\left\{(x,r)\in \R^{n+1}\,:\,r\geqslant 0, x\in \frac{r}{\sqrt{n}}(C\cap F)\right\}.
$$
Setting $J=\{1\leqslant j\leqslant m\,:\,P_H v_j\neq0\}$ and $s_j=\frac{1}{\Vert P_Hv_j\Vert_2}$ we have, by the Brascamp-Lieb inequality, that
\begin{eqnarray*}
\int_{L\cap H} e^{-\sum_{j\in J}\kappa_js_j\langle y,w_j\rangle}dy&=&\int_H\prod_{j\in J}\left(\chi_{[0,\infty)}(\langle y,v_j\rangle)e^{-s_j\langle y,w_j\rangle}\right)^{\kappa_j}dy\cr
&=&\int_H\prod_{j\in J}\left(\chi_{[0,\infty)}(\langle y,P_Hv_j\rangle)e^{-s_j\langle y,w_j\rangle}\right)^{\kappa_j}dy\cr
&=&\int_H\prod_{j\in J}\left(\chi_{[0,\infty)}(\langle y,w_j\rangle)e^{-s_j\langle y,w_j\rangle}\right)^{\kappa_j}dy\cr
&\leqslant&\prod_{j\in J}\left(\int_0^\infty e^{-s_jt}dt\right)^{\kappa_j}=\prod_{j\in J}\Vert P_Hv_j\Vert_2^{\delta_j\Vert P_H v_j\Vert_2^2}\cr
&\leqslant&1.
\end{eqnarray*}
Taking into account \eqref{eq:SumInH} we have that
\begin{eqnarray*}
\int_{L\cap H} e^{-\sum_{j\in J}\kappa_js_j\langle y,w_j\rangle}dy&=&\int_0^\infty\int_{\frac{r}{\sqrt{n}}(C\cap F)}e^{-\sqrt{n+1}r}dx\sqrt{\frac{n+d^2}{n}}dr\cr
&=&\int_0^\infty \frac{r^k(n+d^2)^\frac{1}{2}}{n^{\frac{k+1}{2}}}|C\cap F|e^{-\sqrt{n+1}r}dr\cr
&=&\frac{(n+d^2)^{\frac{1}{2}}k!}{n^\frac{k+1}{2}(n+1)^\frac{k+1}{2}}|C\cap F|.\cr
\end{eqnarray*}
Since $|K\cap F|\leq|C\cap F|$, we get
$$
|K\cap F|\leqslant\frac{n^\frac{k}{2}(n+1)^\frac{k+1}{2}}{k!}\sqrt{\frac{n}{n+d^2}}
$$
or, equivalently,
$$
|K\cap F|^{1/k}\leqslant\sqrt{\frac{n(n+1)^{1+\frac{1}{k}}}{k(k+1)^{1+\frac{1}{k}}}}\left(\frac{n}{n+d^2}\right)^{\frac{1}{2k}}|S_k|^{1/k}.
$$
\end{proof}

\section{Volume of projections of convex bodies in L\"owner's position}\label{sec:VolumeProjections}
In this section we will give the proof of Theorem \ref{thm:VolumeLownerProjections}.

\begin{proof}[Proof of Theorem \ref{thm:VolumeLownerProjections}.] Let us start with the symmetric case. Assume that $K$ is a centrally symmetric convex body in John's position. Following the notation in Section \ref{sec:GeneralSetting} we have that $K\subseteq C_0$, therefore $K\cap F\subseteq C_0\cap F$. This gives that
$$
(K\cap F)^\circ\supseteq (C_0\cap F)^\circ=\textrm{conv}\{\pm P_F u_j\,:\, j\in J_0\}.
$$
Therefore, for every $x\in F$ we have that
\begin{eqnarray*}
h_{K\cap F}(x)&\leqslant& h_{C_0\cap F}(x)=\Vert x\Vert_{(C_0\cap F)^\circ}=\inf\left\{\sum_{j\in J_0}|\alpha_j|\,:\,x=\sum_{j\in J_0}\alpha_jP_Fu_j\right\}\cr
&=&\inf\left\{\sum_{j\in J_0}|\alpha_j|\,:\,x=\sum_{j\in J_0}\alpha_j\Vert P_Fu_j\Vert_2v_j^0\right\}\cr
&=&\inf\left\{\sum_{j\in J_0}\frac{|\beta_j|}{\Vert P_F u_j\Vert_2}\,:\,x=\sum_{j\in J_0}\beta_jv_j^0\right\}\cr
&=&\inf\left\{\sum_{j\in J_0}\delta_j^0|\theta_j|t_j\,:\,x=\sum_{j\in J_0}\delta_j^0\theta_jv_j^0\right\},
\end{eqnarray*}
where we have set, for every $j\in J_0$, $t_j=\frac{1}{\Vert P_H u_j\Vert_2}=\left(\frac{c_j}{\delta_j^0}\right)^\frac{1}{2}$. For every $j\in J_0$, we set
$$
f_j(t):=e^{-|t|t_j},\ \ t\in\R.
$$
Then, if $\displaystyle{x=\sum_{j\in J_0}\delta_j^0\theta_jv_j^0}$ for some $\{\theta_j\}_{j\in J_0}\subseteq \R$, we have
$$
\prod_{j\in J_0} f_j^{\delta_j^0}(\theta_j)=e^{-\sum_{j\in J_0}\delta_j|\theta_j|t_j}\leqslant e^{-h_{K\cap F}(x)}.
$$
Therefore, by the reverse Brascamp-Lieb inequality we obtain
\begin{eqnarray*}
k!|(K\cap F)^\circ|&=&\int_{F}e^{-h_{K\cap F}(x)}dx\geqslant\prod_{j\in J}\left(\int_{\R}e^{-|t|t_j}dt\right)^{\delta_j}\cr
&=&\frac{2^k}{\prod_{j\in J_0}t_j^{\delta_j}}=\frac{2^k}{\prod_{j\in J_0}\left(\frac{c_j}{\delta_j}\right)^\frac{\delta_j}{2}}.
\end{eqnarray*}
As we have seen in the proof of Theorem \ref{thm:VolumeJohnSections}
$$
\prod_{j\in J_0}\left(\frac{c_j}{\delta_j^0}\right)^{\frac{\delta_j}{k}}\leqslant\sum_{j\in J_0}\frac{\delta_j^0}{k}\frac{c_j}{\delta_j^0}=\frac{1}{k}\sum_{j\in J_0}c_j\leqslant\frac{1}{k}\sum_{j=1}^mc_j=\frac{n}{k}.
$$
Taking into account that $|(B_\infty^k)^\circ|=|B_1^k|=\frac{2^k}{k!}$, we obtain
$$
|(K\cap F)^\circ|^{1/k}\geqslant\sqrt{\frac{k}{n}}|(B_\infty^k)^\circ|^{1/k}.
$$

Assume now that $K\subseteq\R^n$ is a (not necessarily symmetric) convex body in John's position and $F\in G_{n,k}$ is a $k$-dimensional linear subspace. Following the notation in Section \ref{sec:GeneralSetting} we have that
\begin{align*}
(C\cap F)^\circ=P_F(C^\circ)&=P_F\left(\textrm{conv}\{u_j\,:\,1\leqslant j\leqslant m\}\right)\\
&=\textrm{conv}\{P_F u_j \,:\,1\leqslant j\leqslant m\}.
\end{align*}
Let us denote, for any $y=(x,r)\in H=F\times \R$
$$
N(y)=\inf\left\{\sum_{j=1}^m\frac{\kappa_j\theta_j}{\sqrt{n\Vert P_Fu_j\Vert_2^2+1}}\,:\,\theta_j\geqslant 0,\,y=\sum_{j=1}^m\kappa_j\theta_jw_j\right\},
$$
where the latter infimum is understood as $\infty$ if there do not exist $\{\theta_j\}_{j=1}^m$ with $\theta_j\geqslant 0$ such that $y=\sum_{j=1}^m\kappa_j\theta_jw_j$. Notice that, for any $\{\theta_j\}_{j=1}^m\subseteq\R$,
\begin{eqnarray*}
y=\sum_{j\in J}\kappa_j\theta_jw_j&\Leftrightarrow&(x,r)=\left(-\sum_{j=1}^m\frac{\kappa_j\theta_jP_F(u_j)}{\sqrt{\Vert P_Fu_j\Vert_2^2+\frac{1}{n}}},\sum_{j=1}^m\frac{\kappa_j\theta_j }{\sqrt{n\Vert P_Fu_j\Vert_2^2+1}}\right)\cr
&\Leftrightarrow&(x,r)=\left(-r\sqrt{n}\sum_{j=1}^m\frac{\kappa_j\theta_jP_F(u_j)}{r\sqrt{n\Vert P_Fu_j\Vert_2^2+1}},\sum_{j=1}^m\frac{\kappa_j\theta_j }{\sqrt{n\Vert P_Fu_j\Vert_2^2+1}}\right)\cr
\end{eqnarray*}
and then there exist $\{\theta_j\}_{j=1}^m\subseteq\R$ with $\theta_j\geq0$ for every $1\leqslant j\leqslant m$ such that the latter equality holds if and only if
$$
(x,r)\in L_1:=\left\{(x,r)\in F\times\R\,:\,r\geq0\,:\,x\in -r\sqrt{n}(C\cap F)^\circ\right\},
$$
and for all such $y=(x,r)\in L_1$  we have that $N(y)=r$. Therefore, for every $y\in H$
$$
\sup_{y=\sum_{j=1}^m\kappa_j\theta_j w_j}\prod_{j=1}^m\left(\chi_{[0,\infty)}(\theta_j)e^{-\frac{\theta_j}{\sqrt{n\Vert P_F u_j\Vert_2^2+1}}}\right)^{\kappa_j}=e^{-N(y)}.
$$
Thus, by the reverse Brascamp-Lieb inequality,
$$
\int_{H}e^{-N(y)}dy\geqslant\prod_{j=1}^m\left(\int_0^\infty e^{-\frac{t}{\sqrt{n\Vert P_F u_j\Vert_2^2+1}}}dt\right)^{\kappa_j}.
$$
On the one hand,
$$
\int_{H}e^{-N(y)}dy=\int_0^\infty e^{-r}\left|-r\sqrt{n}(C\cap F)^\circ\right|dr=k!n^{k/2}|(C\cap F)^\circ|.
$$
On the other hand, for every $1\leqslant j\leqslant m$
$$
\int_0^\infty e^{-\frac{t}{\sqrt{n\Vert P_F u_j\Vert_2^2+1}}}dt=\sqrt{n\Vert P_F u_j\Vert_2^2+1}=\sqrt{n+1}\Vert P_Hv_j\Vert_2.
$$
Therefore, since $(K\cap F)^\circ\supseteq (C\cap F)^\circ$, we obtain
\begin{eqnarray*}
|(K\cap F)^\circ|&\geqslant&\frac{(n+1)^\frac{k+1}{2}\prod_{j=1}^m\Vert P_Hv_j\Vert_2^{\delta_j\Vert P_H v_j\Vert_2^2}}{k!n^{k/2}}\cr
&=&\left(\frac{n+1}{k+1}\right)^\frac{k+1}{2}\left(\frac{k}{n}\right)^{k/2}\prod_{j=1}^m\Vert P_Hv_j\Vert_2^{\delta_j\Vert P_H v_j\Vert_2^2}|S_k^\circ|.
\end{eqnarray*}
Since the function $f(x)=x\log x$ is convex in $(0,\infty)$ we have, by Jensen's inequality, that
$$
\sum_{j=1}^m\frac{\delta_j}{n+1}\Vert P_H v_j\Vert^2\log\Vert P_H v_j\Vert_2^2\geq f\left(\sum_{j=1}^m\frac{\delta_j\Vert P_H v_j\Vert_2^2}{n+1}\right)=f\left(\frac{k+1}{n+1}\right)
$$
and then
$$
\prod_{j=1}^m\Vert P_Hv_j\Vert_2^{\delta_j\Vert P_H v_j\Vert_2^2}=e^{\frac{n+1}{2}\sum_{j=1}^m\frac{\delta_j}{n+1}\Vert P_H v_j\Vert^2\log\Vert P_H v_j\Vert_2^2}\geq\left(\frac{k+1}{n+1}\right)^\frac{k+1}{2}.
$$
Therefore,
$$
|(K\cap F)^\circ|^\frac{1}{k}\geqslant \sqrt{\frac{k}{n}}|S_k^\circ|^\frac{1}{k}.
$$
\end{proof}

\section{Mean width of sections of convex bodies in John's position}\label{sec:MeanWidthSections}

In this section we will prove Theorem \ref{thm:MeanWidthJohnSetions}.

\begin{proof}[Proof of Theorem \ref{thm:MeanWidthJohnSetions}.] Let us start with the symmetric case. Assume that $K$ is a centrally symmetric convex body in John's position and $F\in G_{n,k}$ is a $k$-dimensional linear subspace. Following the notation in Section \ref{sec:GeneralSetting} we have that $K\subseteq C_0$, $K\cap F\subseteq C_0\cap F$, and
$$
(K\cap F)^\circ\supseteq(C_0\cap F)^\circ=\textrm{conv}\{\pm P_F u_j \,:\,j\in J_0\}.
$$
Thus, for every $x\in F$,
\begin{eqnarray*}
h_{K\cap F}(x)&\leqslant& h_{C_0\cap F}(x)=\inf\left\{\sum_{j\in J_0}|\alpha_j|\,:\,x=\sum_{j\in J}\alpha_jP_Fu_j\right\}\cr
&=&\inf\left\{\sum_{j\in J_0}|\alpha_j|\,:\,x=\sum_{j\in J_0}\alpha_j\Vert P_Fu_j\Vert_2v_j^0\right\}\cr
&=&\inf\left\{\sum_{j\in J_0}|\beta_j|t_j\,:\,x=\sum_{j\in J_0}\beta_jv_j^0\right\},\cr
\end{eqnarray*}
where for every $j\in J_0$ we have set $t_j=\frac{1}{\Vert P_F u_j\Vert_2}$. Since, for every $x\in F$, we have that
$$
x=\sum_{j\in J_0}\delta_j^0\langle x, v_j^0\rangle v_j^0,
$$
then, for every $\theta\in S_F$,
$$
h_{K\cap F}(x)\leqslant\sum_{j\in J_0}\delta_j^0 t_j|\langle x,v_j^0\rangle|.
$$
Therefore, if $G_1$ is a standard Gaussian random vector in $F$ and $G_2$ is a standard Gaussian random vector on $\R^k$,
\begin{eqnarray*}
\E h_{K\cap F}(G_1)&\leqslant& \sum_{j\in J_0}\delta_j^0 t_j\E|\langle G_1,v_j\rangle|= \E|\langle G_2,e_1\rangle|\sum_{j\in J_0}\delta_j^0 t_j\cr
&=&\frac{1}{k}\E\Vert G_2\Vert_1\sum_{j\in J_0}\delta_j^0 t_j=\frac{1}{k}\sum_{j\in J_0}\delta_j^0 t_j\E h_{B_\infty^k}(G_2).
\end{eqnarray*}
Since, by H\"older's inequality,
\begin{eqnarray*}
\frac{1}{k}\sum_{j\in J_0}\delta_j^0 t_j&=&\frac{1}{k}\sum_{j\in J_0}c_j\Vert P_Fu_j\Vert_2\leqslant\frac{1}{k}\left(\sum_{j\in J_0}c_j\right)^\frac{1}{2}\left(\sum_{j\in J_0}c_j\Vert P_F u_j\Vert_2^2\right)^\frac{1}{2}\cr
&\leqslant& \frac{1}{k}\left(\sum_{j=1}^mc_j\right)^\frac{1}{2}\left(\sum_{j=1}^mc_j\Vert P_F u_j\Vert_2^2\right)^\frac{1}{2}\leqslant\frac{\sqrt{nk}}{k}=\sqrt{\frac{n}{k}},
\end{eqnarray*}
we obtain
$$
\E h_{K\cap F}(G_1)\leqslant\sqrt{\frac{n}{k}}\E h_{B_\infty^k}(G_2).
$$
Equivalently
$$
w(K\cap F)\leqslant \sqrt{\frac{n}{k}}w(B_\infty^k).
$$
Let us now assume that $K$ is a not necessarily symmetric convex body  in John's position and let $F\in G_{n,k}$. Following the notation in Section \ref{sec:GeneralSetting}, we have that for every $x\in F$
$$
h_{K\cap F}(x)\leqslant h_{C\cap F}(x)=\inf\left\{\sum_{j=1}^m a_j:x=\sum_{j=1}^m a_j P_Fu_j, a_j\geqslant 0\right\}.
$$
Let $\theta\in S_F$. Since $\displaystyle{\sum_{j=1}^mc_jP_Fu_j=0}$ we may write
$$\theta=\sum_{j=1}^mc_j\Vert P_F u_j\Vert_2\left (\left\langle
\theta,\frac{P_Fu_j}{\Vert P_F u_j\Vert_2}\right\rangle-\min_{1\leqslant k\leqslant m}\left\langle \theta,\frac{P_Fu_k}{\Vert P_F u_k\Vert_2}\right\rangle\right )P_Fu_j
$$
and then, setting (like in the symmetric case before) $J_0=\{ 1\leqslant j\leqslant m:P_Fu_j\neq 0\}$ and $v_j^0=\frac{P_Fu_j}{\Vert P_Fu_j\Vert_2}$ for $j\in J_0$, we get
\begin{align*}w(K\cap F)\leqslant w(C\cap F) &=\int_{S_F}h_{C\cap
	F}(\theta)d\sigma (\theta)\\
&\leqslant \int_{S_F}\sum_{j=1}^m c_j\Vert P_Fu_j\Vert_2\left (\langle
\theta,v_j^0\rangle-\min_{1\leqslant k\leqslant m}\langle \theta,v_k^0\rangle\right )d\sigma (\theta)\\
&= \int_{S_F}\sum_{j=1}^m c_j\Vert P_Fu_j\Vert_2\max_{1\leqslant k\leqslant m}\left|\langle
\theta,v_k^0\rangle\right|d\sigma (\theta)\\
&=\sum_{j=1}^m c_j\Vert P_Fu_j\Vert_2\int_{S_F}\max_{1\leqslant k\leqslant m}\left|\langle
\theta,v_k^0\rangle\right|d\sigma (\theta).\\
\end{align*}
Since by Cauchy-Schwarz inequality
$$
\sum_{j=1}^m c_j\Vert P_Fu_j\Vert_2\leqslant\left(\sum_{j=1}^m c_j\right)^\frac{1}{2}\left(\sum_{j=1}^mc_j\Vert P_Fu_j\Vert_2^2\right)^\frac{1}{2}=\sqrt{nk}
$$
and since there exists an absolute constant $c>0$ such that
$$
\int_{S_F}\max_{1\leqslant l\leqslant m}\left|\langle \theta,v_l^0\rangle\right|d\sigma (\theta)\leqslant c\sqrt{\frac{\log m}{k}},
$$
taking into account that $m=O(n^2)$ and that $w(S_k)\simeq \sqrt{k\log k}$ we obtain that there exists an absolute constant $C>0$ such that
$$
w(K\cap F)\leqslant C\sqrt{\frac{n\log n}{k\log k}}w(S_k).
$$
\end{proof}


\section{Mean width of projections of convex bodies in L\"owner's position}\label{sec:MeanWidthProjections}

In this section we will prove Theorem \ref{thm:MeanWidthLownerProjections}. We will make use of the following lemma.

\begin{lemma}\label{lem:GaussianMeasure}
Let $K\subseteq\R^n$ be a (not necessarily symmetric) convex body, let $F$ be a $k$-dimensional affine subspace at distance $d$ from the origin and let $\alpha\in\R$ and $\beta\leqslant 0$. Let us identify $F$ with $\R^k$ with the origin at the closest point in $F$ to $0$ and let $\gamma_k$ be the $k$-dimensional Gaussian measure on $F$. Then,
\begin{align*}
\sqrt{\frac{n+d^2}{n}}&\int_0^\infty\frac{e^{-\frac{\left(r-\alpha\sqrt{n+1}\right)^2}{2}}}{\sqrt{2\pi}}e^{\beta r\sqrt{n+1}}\gamma_k\left(\frac{r}{\sqrt{n}}(C\cap F)\right)dr\leqslant\cr
\leqslant&\int_{0}^\infty \frac{e^{-\frac{(r-\alpha d_1)^2}{2}}}{\sqrt{2\pi}}e^{\beta\sqrt{k+1}r}\gamma_k\left(r\sqrt{k+1}\Delta_k\right)dr\cr
\end{align*}
where $d_1=\frac{1}{\sqrt{k+1}}\sum_{j\in J}\delta_j\Vert P_Hv_j\Vert_2$, and $H, J, v_j$, and $\delta_j$ are defined as in Section~\ref{sec:GeneralSetting}.
\end{lemma}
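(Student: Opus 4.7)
The plan is to apply the Brascamp--Lieb inequality (Theorem~\ref{thm:BLIdentity}) on the $(k+1)$-dimensional subspace $H$ using the identity decomposition $I_H=\sum_{j\in J}\kappa_j w_j\otimes w_j$ from Section~\ref{sec:GeneralSetting}, and then to compare the resulting product bound with the exact simplex-side value.

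First, I would parametrize $H$ by $(r,z)\in\R\times F_0$ via $y=(\tfrac{r}{\sqrt n}p_0+z,r)$, where $p_0\in F$ is the closest point to the origin and $F_0=F-p_0$, so that $L\cap H$ corresponds to $\{r\geq 0,\ z\in\tfrac{r}{\sqrt n}((C\cap F)-p_0)\}$ and the Lebesgue measure on $H$ satisfies $dy_H=\sqrt{(n+d^2)/n}\,dr\,dz$. Thus the prefactor $\sqrt{(n+d^2)/n}$ is exactly what converts the LHS into an integral over $L\cap H$ against $dy_H$. Using $\|y\|_H^2=r^2(n+d^2)/n+\|z\|^2=\sum_j\kappa_j\langle y,w_j\rangle^2$ together with the identity \eqref{eq:SumInH}, $\sqrt{n+1}\,r=\sum_j\kappa_j s_j\langle y,w_j\rangle$ with $s_j=\|P_Hv_j\|_2^{-1}$, the combined exponent $-(r-\alpha\sqrt{n+1})^2/2-\|z\|^2/2+\beta r\sqrt{n+1}$ rewrites as $-\|y\|_H^2/2+(\alpha+\beta)\sqrt{n+1}\,r-\alpha^2(n+1)/2$, and the integrand on $L\cap H$ factors as $\prod_{j\in J}\bigl[\chi_{[0,\infty)}(\langle y,w_j\rangle)\,e^{-\langle y,w_j\rangle^2/2+(\alpha+\beta)s_j\langle y,w_j\rangle}\bigr]^{\kappa_j}$. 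Brascamp--Lieb then gives
$$\mathrm{LHS}\leq\frac{e^{-\alpha^2(n+1)/2}}{(2\pi)^{(k+1)/2}}\prod_{j\in J}\|f_j\|_1^{\kappa_j},\qquad f_j(t)=\chi_{[0,\infty)}(t)\,e^{-t^2/2+(\alpha+\beta)s_j t},$$
with $\|f_j\|_1=\sqrt{2\pi}\,e^{(\alpha+\beta)^2s_j^2/2}\Phi((\alpha+\beta)s_j)$ by completing the square on the half-line.

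For the RHS, the orthogonal decomposition $\R^{k+1}=\R\,\mathbf 1/\sqrt{k+1}\oplus H_0'$ identifies the cone $\{r\geq 0,\ z\in r\sqrt{k+1}\Delta_k\}$ with the positive orthant and the simplex's facet-normal decomposition with the trivial $\sum_i e_i\otimes e_i=I_{\R^{k+1}}$ ($\kappa_i=1$), so Fubini produces $\mathrm{RHS}=e^{-\alpha^2d_1^2/2+(k+1)\nu^2/2}\Phi(\nu)^{k+1}$ with $\nu=\alpha d_1/\sqrt{k+1}+\beta$. The remaining inequality LHS$\leq$RHS then reduces to
$$\prod_{j\in J}\Phi((\alpha+\beta)s_j)^{\kappa_j}\,e^{(\alpha+\beta)^2\sum_j\kappa_j s_j^2/2-\alpha^2(n+1)/2}\ \leq\ \Phi(\nu)^{k+1}\,e^{(k+1)\nu^2/2-\alpha^2d_1^2/2},$$
which I expect to follow from Jensen's inequality applied to the concave function $\log\Phi$ (yielding $\prod_j\Phi((\alpha+\beta)s_j)^{\kappa_j}\leq\Phi((\alpha+\beta)\bar s)^{k+1}$ with $\bar s=\sum_j\kappa_j s_j/(k+1)=d_1/\sqrt{k+1}$), combined with the identities $\sum_j\kappa_j=k+1$, $\sum_j\kappa_j s_j=\sqrt{k+1}\,d_1$, the constraint $\sum_{j\in J}\kappa_j s_j^2=\sum_{j\in J}\delta_j\leq n+1$, the Cauchy--Schwarz inequality $(\sum_j\kappa_j s_j)^2\leq(\sum_j\kappa_j)(\sum_j\kappa_j s_j^2)$, and $s_j\geq 1$ coming from $\|P_Hv_j\|_2\leq 1$. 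The main technical obstacle is reconciling the exponential factors: the LHS Brascamp--Lieb bound carries the weight $n+1$ while the simplex RHS carries only $k+1$, and the concavity of $\log\Phi$ together with $(\alpha+\beta)^2s_j^2\geq\nu^2$ on average must compensate for precisely this discrepancy, using the sign hypothesis $\beta\leq 0$ crucially.
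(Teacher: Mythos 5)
Your overall strategy is the paper's strategy: parametrize $H$, convert the left side into an integral of a tilted Gaussian over $L\cap H$, apply Brascamp--Lieb using $I_H=\sum_{j\in J}\kappa_j w_j\otimes w_j$, and evaluate the simplex side exactly as a Gaussian integral over the positive orthant in $\R^{k+1}$ sliced along $v_0$. The reduction to the scalar inequality
$$
\prod_{j\in J}\Phi\bigl((\alpha+\beta)s_j\bigr)^{\kappa_j}\,e^{\tfrac{(\alpha+\beta)^2\sigma}{2}-\tfrac{\alpha^2(n+1)}{2}}\ \leq\ \Phi(\nu)^{k+1}\,e^{\tfrac{(k+1)\nu^2}{2}-\tfrac{\alpha^2 d_1^2}{2}},\qquad \sigma=\sum_{j\in J}\kappa_j s_j^2,
$$
is also correct. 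The gap is in the proposed way of closing it. You want to bound the $\Phi$-product via Jensen on $\log\Phi$ and treat the exponential prefactor separately; but the prefactors alone go the \emph{wrong} way. At $\alpha=0$ they read $e^{\beta^2\sigma/2}$ vs.\ $e^{(k+1)\beta^2/2}$, and since $\kappa_j=\delta_j\Vert P_Hv_j\Vert_2^2\leq\delta_j$ one has $\sigma=\sum_{j\in J}\delta_j\geq\sum_{j\in J}\kappa_j=k+1$, so $e^{\beta^2\sigma/2}\geq e^{(k+1)\beta^2/2}$. Thus the needed slack must come from the $\Phi$-factors, but the Jensen step $\prod_j\Phi((\alpha+\beta)s_j)^{\kappa_j}\leq\Phi((\alpha+\beta)\bar s)^{k+1}$ produces no leftover to compensate the prefactor; the two pieces cannot be closed independently, so as written the argument does not complete.

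The fix (and the route the paper actually takes) is to not separate the shift from the tilt. Write the $j$-th Brascamp--Lieb factor as $e^{\alpha^2 s_j^2/2}\int_0^\infty\frac{e^{-(t-\alpha s_j)^2/2}}{\sqrt{2\pi}}\,e^{\beta s_j t}\,dt$ rather than $e^{(\alpha+\beta)^2 s_j^2/2}\Phi((\alpha+\beta)s_j)$. First use $\beta\leqslant 0$, $s_j\geqslant 1$, $t\geqslant 0$ to replace $e^{\beta s_j t}$ by the larger $e^{\beta t}$, eliminating the $s_j$ from the tilt. Then apply Jensen with weights $\kappa_j/(k+1)$ to the function $a\mapsto\log\int_0^\infty\frac{e^{-(t-a)^2/2}}{\sqrt{2\pi}}e^{\beta t}\,dt=\beta a+\frac{\beta^2}{2}+\log\Phi(a+\beta)$, which is concave in $a$ (linear part is exact under Jensen; the $\log\Phi$ part gives the inequality). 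This collapses the product to the $(k+1)$-st power centred at $\bar s=d_1/\sqrt{k+1}$, producing exactly the right-hand side, while the remaining prefactor $e^{\alpha^2\sigma/2}\leqslant e^{\alpha^2(n+1)/2}$ is absorbed by the $e^{-\alpha^2(n+1)/2}$. Your listed ingredients ($\beta\leqslant 0$, $s_j\geqslant 1$, concavity, Cauchy--Schwarz) are the right ones, but they must be applied in this order and to the shift-plus-tilt combination, not to $\log\Phi$ and the exponential prefactor in isolation.
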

\begin{proof}
Following the notation introduced in Section \ref{sec:GeneralSetting} we have that if $H\in G_{n+1,k+1}$ is the $(k+1)$-dimensional linear subspace $H=\textrm{span}\{(x,\sqrt{n})\,:\,x\in F\}$ then
$$
L\cap H=\Big\{(x,r)\,:\,r\geqslant0,\,x\in\frac{r}{\sqrt{n}}(C\cap F)\Big\}.
$$
For any $\alpha,\beta\in\R$, let $\mu_{\alpha,\beta}$ be the measure on $H$ whose density with respect to the Lebesgue measure at a point $y=(x,r)$ is
$$
d\mu_{\alpha,\beta}(r)=\frac{e^{-\frac{\Vert y\Vert_2^2}{2}}}{(2\pi)^{\frac{k+1}{2}}}e^{(\alpha+\beta)\sqrt{n+1}r}dy.
$$
Denoting by $\gamma_k$ the $k$-dimensional Gaussian measure on $F$, we have that, on the one-hand, for any $\alpha,\beta\in\R$
\begin{eqnarray*}
\mu_{\alpha,\beta}(L\cap H)&=&\int_0^\infty\frac{e^{-\frac{r^2}{2}}}{\sqrt{2\pi}}e^{\alpha r\sqrt{n+1}}e^{\beta\sqrt{n+1} r}\gamma_k\left(\frac{r}{\sqrt{n}}(C\cap F)\right)\sqrt{\frac{n+d^2}{n}}dr\cr
&=&e^{\frac{\alpha^2(n+1)}{2}}\int_0^\infty\frac{e^{-\frac{\left(r-\alpha\sqrt{n+1}\right)^2}{2}}}{\sqrt{2\pi}}e^{\beta\sqrt{n+1}r}\gamma_k\left(\frac{r}{\sqrt{n}}(C\cap F)\right)\sqrt{\frac{n+d^2}{n}}dr.
\end{eqnarray*}
On the other hand, setting $J=\{1\leqslant j\leqslant m\,:\, P_Hv_j\neq 0\}$ and  $s_j=\frac{1}{\Vert P_Hv_j\Vert_2}$ for every $j\in J$, and denoting $y=(x,r)\in H$ we have, by the Brascamp-Lieb inequality,
\begin{eqnarray*}
&&\mu_{\alpha,\beta}(L\cap H)=\int_{H}\frac{e^{-\frac{\Vert y\Vert_2^2}{2}}}{(2\pi)^{(k+1)/2}}e^{\alpha\sqrt{n+1}r}e^{\beta\sqrt{n+1} r}\prod_{j\in J}\chi_{[0,\infty)}(\langle y,v_j\rangle)dy\cr
&=&\int_{H}\frac{e^{-\frac{\Vert y\Vert_2^2}{2}}}{(2\pi)^{(k+1)/2}}e^{\alpha \sqrt{n+1}r}e^{\beta \sqrt{n+1}r}\prod_{j\in J}\chi_{[0,\infty)}(\langle y,w_j\rangle)dy\cr
&=&\int_{H}\frac{e^{-\frac{\sum_{j\in J}\kappa_j\langle y,w_j\rangle^2}{2}}}{(2\pi)^{(k+1)/2}}e^{\sum_{j\in J}\kappa_j (\alpha+\beta) s_j\langle y,w_j\rangle}\prod_{j\in J}\chi_{[0,\infty)}(\langle y,w_j\rangle)dy\cr
&=&\int_{H}\prod_{j\in J}\left(\frac{e^{-\frac{\langle y,w_j\rangle^2}{2}}}{\sqrt{2\pi}}e^{(\alpha+\beta)s_j\langle y,w_j\rangle}\chi_{[0,\infty)}(\langle y,w_j\rangle)\right)^{\kappa_j}dy\cr
&\leqslant&\prod_{j\in J}\left(\int_0^\infty\frac{e^{-\frac{t^2}{2}}}{\sqrt{2\pi}}e^{\alpha s_jt}e^{\beta s_jt}dt\right)^{\kappa_j}=e^{\frac{\alpha^2\sum_{j\in J}\kappa_js_j^2}{2}}\prod_{j=1}^m\left(\int_0^\infty\frac{e^{-\frac{\left(t-\alpha s_j\right)^2}{2}}}{\sqrt{2\pi}}e^{\beta s_j t}dt\right)^{\kappa_j}\cr
&=&e^{\frac{\alpha^2(n+1)}{2}}\prod_{j\in J}\left(\int_0^\infty\frac{e^{-\frac{\left(t-\alpha s_j\right)^2}{2}}}{\sqrt{2\pi}}e^{\beta s_j t}dt\right)^{\kappa_j}\cr
\end{eqnarray*}
Therefore, for any $\alpha,\beta\in\R$,
\begin{align*}
&\sqrt{\frac{n+d^2}{n}}\int_0^\infty\frac{e^{-\frac{\left(r-\alpha\sqrt{n+1}\right)^2}{2}}}{\sqrt{2\pi}}e^{\beta r\sqrt{n+1}}\gamma_k\left(\frac{r}{\sqrt{n}}(C\cap F)\right)dr\\
&\hspace*{2cm}\leqslant\prod_{j\in J}\left(\int_0^\infty\frac{e^{-\frac{\left(t-\alpha s_j\right)^2}{2}}}{\sqrt{2\pi}}e^{\beta s_jt}dt\right)^{\kappa_j},
\end{align*}
and, by the Pr\'ekopa-Leindler inequality, setting $d_1=\frac{1}{\sqrt{k+1}}\sum_{j=1}^m\kappa_j s_j$ we have that if $\beta\leqslant0$ then (taking into account that $s_j=\frac{1}{\Vert P_H v_j\Vert_2}\geqslant 1$)
\begin{eqnarray*}
&&\sqrt{\frac{n+d^2}{n}}\int_0^\infty\frac{e^{-\frac{\left(r-\alpha\sqrt{n+1}\right)^2}{2}}}{\sqrt{2\pi}}e^{\beta r\sqrt{n+1}}\gamma_k\left(\frac{r}{\sqrt{n}}(C\cap F)\right)dr\cr
&\leqslant&\prod_{j\in J}\left(\int_0^\infty\frac{e^{-\frac{\left(t-\alpha s_j\right)^2}{2}}}{\sqrt{2\pi}}e^{\beta s_jt}dt\right)^{\kappa_j}\cr
&\leqslant&\prod_{j\in J}\left(\int_0^\infty\frac{e^{-\frac{\left(t-\alpha s_j\right)^2}{2}}}{\sqrt{2\pi}}e^{\beta t}dt\right)^{\kappa_j}\leqslant\left(\int_0^\infty \frac{e^{-\frac{\left(t-\frac{\alpha}{k+1}\sum_{j\in J}\kappa_j s_j\right)^2}{2}}}{\sqrt{2\pi}}e^{\beta t}dt\right)^{k+1}\cr
&=&\int_{[0,\infty)^{k+1}} \prod_{i=1}^{k+1}\frac{e^{-\frac{\left(t_i-\frac{\alpha d_1}{\sqrt{k+1}}\right)^2}{2}}}{\sqrt{2\pi}}e^{\beta t_i}dt\cr
&=&\int_{\left[-\frac{\alpha d_1}{\sqrt{k+1}},\infty\right)^{k+1}} \prod_{i=1}^{k+1}\frac{e^{-\frac{\Vert t\Vert_2^2}{2}}}{\sqrt{2\pi}}e^{\beta\sqrt{k+1}\langle t,v_0\rangle}e^{\beta \alpha d_1\sqrt{k+1}}dt,\cr
\end{eqnarray*}
where $v_0=\left(\frac{1}{\sqrt{k+1}},\dots,\frac{1}{\sqrt{k+1}}\right)$.
Therefore, for any $\alpha\in\R$ and any $\beta\leqslant0$,
\begin{eqnarray*}
&&\sqrt{\frac{n+d^2}{n}}\int_0^\infty\frac{e^{-\frac{\left(r-\alpha\sqrt{n+1}\right)^2}{2}}}{\sqrt{2\pi}}e^{\beta r\sqrt{n+1}}\gamma_k\left(\frac{r}{\sqrt{n}}(C\cap F)\right)dr\cr
&\leqslant&\int_{\left[-\frac{\alpha d_1}{\sqrt{k+1}},\infty\right)^{k+1}} \prod_{i=1}^{k+1}\frac{e^{-\frac{\Vert t\Vert_2^2}{2}}}{\sqrt{2\pi}}e^{\beta\sqrt{k+1}\langle t,v_0\rangle}e^{\beta \alpha d_1\sqrt{k+1}}dt\cr
&=&\int_{-\alpha d_1}^\infty \frac{e^{-\frac{t^2}{2}}}{\sqrt{2\pi}}e^{\beta\sqrt{k+1}t}e^{\beta \alpha d_1\sqrt{k+1}}\gamma_k\left((t+\alpha d_1)\sqrt{k+1}\Delta_k\right)dt\cr
&=&\int_{0}^\infty \frac{e^{-\frac{(r-\alpha d_1)^2}{2}}}{\sqrt{2\pi}}e^{\beta\sqrt{k+1}r}\gamma_k\left(r\sqrt{k+1}\Delta_k\right)dr.
\end{eqnarray*}
\end{proof}

\begin{proof}[Proof of Theorem \ref{thm:MeanWidthLownerProjections}] Let us start with the symmetric case. Assume that $K$ is a centrally symmetric convex body in John's position and $F\in G_{n,k}$ is a $k$-dimensional linear subspace.
We want to prove that
$$
w((K\cap F)^\circ)\geqslant w\left(\left(\sqrt{\frac{n}{k}}B_\infty^k\right)^{\circ}\right),
$$
which is equivalent to
$$
\E\Vert G_1\Vert_{K\cap F}\geqslant\E\Vert G_2\Vert_{\sqrt{\frac{n}{k}}B_\infty^k},
$$
where $G_1$ is a standard Gaussian random vector on $F$ and $G_2$ is a standard Gaussian random vector on $\R^k$.
Since for any convex body $L\subseteq\R^k$ containing the origin in its interior we have, by Fubini's theorem, that if $G$ is a standard Gaussian random vector then
$$
\E\Vert G\Vert_L=\int_0^\infty\Pro (\Vert G\Vert_L\geqslant t)dt=\int_0^\infty\gamma_k(\R^n\setminus tL)dt
$$
where $\gamma_k(A)$ denotes the Gaussian measure of the $k$-dimensional set $A$, we obtain that the statement we want to prove is equivalent to
$$
\int_0^\infty\gamma_k(F\setminus t(K\cap F))dt\geqslant \int_0^\infty\gamma_k\left(\R^n\setminus t\sqrt{\frac{n}{k}}B_\infty^k\right)dt
$$
or, equivalently,
$$
\int_0^\infty (1-\gamma_k(t(K\cap F))dt\geqslant \int_0^\infty\left(1-\gamma_k\left(t\sqrt{\frac{n}{k}}B_\infty^k\right)\right)dt.
$$
We are going to prove that for any $t\geqslant 0$
$$
\gamma_k(t(K\cap F))\leqslant\gamma_k\left(t\sqrt{\frac{n}{k}} B_\infty^k\right),
$$
which implies the latter inequality.

Following the notation in Section \ref{sec:GeneralSetting}, since $K$ is a centrally symmetric convex body in John's position we have that
$$
K\cap F\subseteq C_0\cap F=\left\{x\in F\,:\, |\langle x, v_j^0\rangle|\leqslant t_j,\,\forall j\in J_0\right\},
$$
where, for every $j\in J_0$ we have set $t_j=\frac{1}{\Vert P_F u_j\Vert_2}=\left(\frac{c_j}{\delta_j^0}\right)^{1/2}$. Therefore, for every $t\geqslant 0$,
$$
t(K\cap F)\subseteq t(C_0\cap F)=\left\{x\in F\,:\, |\langle x, v_j^0\rangle|\leqslant t t_j,\,\forall j\in J\right\},
$$
and then, by the Brascamp-Lieb inequality
\begin{eqnarray*}
\gamma_k(t(K\cap F))&\leqslant&\gamma_k(t(C_0\cap F))=\int_{F}\left(\prod_{j\in J}\chi_{[-tt_j,tt_j]}(\langle x, v_j^0\rangle) \right)\frac{e^{-\frac{\Vert x\Vert_2^2}{2}}}{(2\pi)^{k/2}}dx\cr
&=&\int_{F}\left(\prod_{j\in J_0}\chi_{[-tt_j,tt_j]}(\langle x, v_j^0\rangle) \right)\frac{e^{-\frac{\sum_{j\in J_0}\delta_j^0\langle x,v_j^0\rangle^2}{2}}}{(2\pi)^{k/2}}dx\cr
&=&\int_{F}\prod_{j\in J_0}\left(\chi_{[-tt_j,tt_j]}(\langle x, v_j^0\rangle) \frac{e^{-\frac{\langle x,v_j^0\rangle^2}{2}}}{\sqrt{2\pi}}\right)^{\delta_j^0}dx\cr
&\leqslant&\prod_{j\in J_0}\left(\int_{-tt_j}^{tt_j}\frac{e^{-\frac{t^2}{2}}}{\sqrt{2\pi}}dt\right)^{\delta_j^0}=\left(\prod_{j\in J_0}\gamma_1(tt_j[-e_1,e_1])^\frac{\delta_j^0}{k}\right)^k.
\end{eqnarray*}
Since $\gamma_1$ is log-concave, we have that
$$
\gamma_k(t(K\cap F))\leqslant\gamma_1\left(\left(t\sum_{j\in J_0}\frac{t_j\delta_j^0}{k}\right)[-e_1,e_1]\right)^k=\gamma_k\left(\left(t\sum_{j\in J_0}\frac{t_j\delta_j^0}{k}\right) B_\infty^k\right).
$$
Since, by H\"older's inequality
\begin{eqnarray*}
\sum_{j\in J_0}\frac{t_j\delta_j^0}{k}&=&\sum_{j\in J_0}\frac{\sqrt{c_j\delta_j^0}}{k}\leqslant\frac{1}{k}\left(\sum_{j\in J_0} c_j\right)^{1/2}\left(\sum_{j\in J_0} \delta_j^0\right)^{1/2}\cr
&\leqslant&\frac{1}{k}\left(\sum_{j=1}^m c_j\right)^{1/2}\left(\sum_{j=1}^m \delta_j^0\right)^{1/2}=\sqrt{\frac{n}{k}},
\end{eqnarray*}
we obtain that for every $t\geqslant0$
$$
\gamma_k(t(K\cap F))\leqslant\gamma_k\left(t\sqrt{\frac{n}{k}}B_\infty^k\right).
$$

Assume now that $K\subseteq\R^n$ is a (not necessarily symmetric) convex body in John's position and $F\in G_{n,k}$ is a linear subspace. Following the notation introduced in Section \ref{sec:GeneralSetting} we have that if $H=F\times\R$ then
$$
L\cap H=\left\{(x,r)\in F\times\R\,:\,r\geqslant 0,\, x\in \frac{r}{\sqrt{n}}(C\cap F)\right\}.
$$

By Lemma \ref{lem:GaussianMeasure} with $\beta=0$ and taking into account that $F\in G_{n,k}$ is a linear subspace, for any $\alpha\in\R$
we have that
$$
\int_0^\infty\frac{e^{-\frac{\left(r-\alpha\sqrt{n+1}\right)^2}{2}}}{\sqrt{2\pi}}\gamma_k\left(\frac{r}{\sqrt{n}}(C\cap F)\right)dr\leqslant \int_0^\infty\frac{e^{-\frac{\left(r-\alpha d_1\right)^2}{2}}}{\sqrt{2\pi}}\gamma_k\left(r\sqrt{k+1}\Delta_{k}\right)dr,
$$
where $d_1=\frac{1}{\sqrt{k+1}}\sum_{j\in J}\delta_j\Vert P_Hv_j\Vert_2$ . Applying the latter inequality to $-\alpha$, we also get
$$
\int_0^\infty\frac{e^{-\frac{\left(r+\alpha\sqrt{n+1}\right)^2}{2}}}{\sqrt{2\pi}}\gamma_k\left(\frac{r}{\sqrt{n}}(C\cap F)\right)dr\leqslant \int_0^\infty\frac{e^{-\frac{\left(r+\alpha d_1\right)^2}{2}}}{\sqrt{2\pi}}\gamma_k\left(r\sqrt{k+1}\Delta_{k}\right)dr
$$
or, equivalently,
$$
\int_{-\infty}^0\frac{e^{-\frac{\left(r-\alpha\sqrt{n+1}\right)^2}{2}}}{\sqrt{2\pi}}\gamma_k\left(\frac{|r|}{\sqrt{n}}(C\cap F)\right)dr\leqslant \int_{-\infty}^0\frac{e^{-\frac{\left(r-\alpha d_1\right)^2}{2}}}{\sqrt{2\pi}}\gamma_k\left(|r|\sqrt{k+1}\Delta_{k}\right)dr.
$$
Therefore, for any $\alpha\in\R$,
$$
\int_{-\infty}^\infty\frac{e^{-\frac{\left(r-\alpha\sqrt{n+1}\right)^2}{2}}}{\sqrt{2\pi}}\gamma_k\left(\frac{|r|}{\sqrt{n}}(C\cap F)\right)dr\leqslant \int_{-\infty}^\infty\frac{e^{-\frac{\left(r-\alpha d_1\right)^2}{2}}}{\sqrt{2\pi}}\gamma_k\left(|r|\sqrt{k+1}\Delta_{k}\right)dr
$$
and hence,
$$
\int_{-\infty}^\infty\frac{e^{-\frac{\left(r-\alpha\sqrt{n+1}\right)^2}{2}}}{\sqrt{2\pi}}\gamma_k(F\setminus\left(\left(\frac{|r|}{\sqrt{n}}(C\cap F)\right)\right)dr\geqslant \int_{-\infty}^\infty\frac{e^{-\frac{\left(r-\alpha d_1\right)^2}{2}}}{\sqrt{2\pi}}\gamma_k\left(\R^k\setminus(|r|\sqrt{k+1}\Delta_{k})\right)dr
$$
Integrating in $\alpha\in\R$ we obtain
$$
\frac{1}{\sqrt{n+1}}\int_{-\infty}^\infty\gamma_k\left(F\setminus\left(\frac{|r|}{\sqrt{n}}(C\cap F)\right)\right)dr\geqslant \frac{1}{d_1}\int_{-\infty}^\infty\gamma_k\left(\R^k\setminus(|r|\sqrt{k+1}\Delta_{k})\right)dr.
$$
Equivalently,
$$
\frac{1}{\sqrt{n+1}}\int_{0}^\infty\gamma_k\left(F\setminus\left(\frac{r}{\sqrt{n}}(C\cap F)\right)\right)dr\geqslant \frac{1}{d_1}\int_{0}^\infty\gamma_k\left(\R^k\setminus(r\sqrt{k+1}\Delta_{k})\right)dr,
$$
or
$$
\sqrt{\frac{n}{n+1}}\int_{0}^\infty\gamma_k\left(F\setminus\left(r(C\cap F)\right)\right)dr\geqslant \frac{1}{d_1\sqrt{k+1}}\int_{0}^\infty\gamma_k\left(\R^k\setminus(r\Delta_{k})\right)dr.
$$
Integrating in polar coordinates, and taking into account that $K\subseteq C$ we obtain
$$
w((K\cap F)^\circ)\geqslant\frac{1}{d_1}\sqrt{\frac{n+1}{n(k+1)}}w((\Delta_k)^\circ).
$$
Since
$$
\sqrt{k(k+1)}\Delta_k=S_k,
$$
where $S_k$ denotes the $k$-dimensional regular simplex in John's position, we have that, for any $k$-dimensional subspace $F$,
$$
w((K\cap F)^\circ)\geqslant\frac{1}{d_1}\sqrt{\frac{k(n+1)}{n}}w((S_k)^\circ).
$$
Thus, for any $k$-dimensional subspace $F$,
$$
w((K\cap F)^\circ)\geqslant\sqrt{\frac{k(k+1)(n+1)}{n}}\frac{1}{d_1}w((S_k)^\circ).
$$
Since
$$
d_1=\sum_{j=1}^m\delta_j\Vert P_Hv_j\Vert_2\leqslant\left(\sum_{j=1}^m\delta_j\right)^\frac{1}{2}\left(\sum_{j=1}^m\delta_j\Vert P_Hv_j\Vert_2^2\right)^\frac{1}{2}=\sqrt{(n+1)(k+1)},
$$
we have that
$$
w((K\cap F)^\circ)\geqslant\sqrt{\frac{k}{n}}w(S_k^\circ).
$$
\end{proof}

\section{The Wills functional of sections of convex bodies in John's position}\label{sec:WillsFunctionalSections}

In this section we will give the proof of Theorem \ref{thm:WillsJohnSymmetric}.

\begin{proof}[Proof of Theorem \ref{thm:WillsJohnSymmetric}.]
Let $K$ be a centrally symmetric convex body in John's position and $F\in G_{n,k}$ a $k$-dimensional linear subspace. Following the notation in Section \ref{sec:GeneralSetting} we have that, for every $\lambda\geqslant0$,
$$
\lambda(K\cap F)\subseteq \lambda(C_0\cap F)=\left\{x\in F\,:\, |\langle x, v_j^0\rangle|\leqslant \lambda t_j,\,\forall j\in J_0\right\},
$$
where, for every $j\in J_0$, we denote $t_j=\frac{1}{\Vert P_F u_j\Vert_2}=\left(\frac{c_j}{\delta_j^0}\right)^{1/2}$.
Let, for every $j\in J_0$, $f_j:\R\to[0,\infty)$ be the function
$$
f_j(t)=e^{-\pi d(tv_j^0,P_{\langle v_j^0\rangle}(\lambda(C_0\cap F)))^2}\quad\forall t\in\R,
$$
where $\langle v_j^0\rangle$ denotes the $1$-dimensional subspace spanned by $v_j^0$. Notice that, for every $j\in J_0$,
$$
\int_{\R}f_j(t)dt=\int_{\R}e^{-\pi d(tv_j^0,P_{\langle v_j^0\rangle}(\lambda(C_0\cap F)))^2}dt=\mathcal{W}(P_{\langle v_j^0\rangle}(\lambda(C_0\cap F))),
$$
and, since for every $j\in J_0$ we have that
$$
P_{\langle v_j^0\rangle}(\lambda(C_0\cap F))\subseteq\left[-\lambda t_j,\lambda t_j\right]v_j,
$$
we see that, for every $j\in J_0$,
$$
\int_{\R}f_j(t)dt\leqslant \mathcal{W}\left(\left[-\lambda t_j,\lambda t_j\right]v_j\right)=\left(1+2\lambda t_j\right).
$$
Therefore, by the Brascamp-Lieb inequality,
\begin{align*}
\int_{F}e^{-\pi\sum_{j\in J_0}\delta_j^0d\left(\langle x,v_j^0\rangle  v_j^0,P_{\langle v_j^0\rangle}(\lambda(C_0\cap F)) \right)^2}
&=\int_{F}\prod_{j\in J_0}f_j^{\delta_j^0}(\langle x,v_j^0\rangle) dx\\
&\leqslant\prod_{j\in J_0}\left(\int_{\R}f_j(t)dt\right)^{\delta_j^0}\\
&\leqslant \prod_{j\in J_0}\left(1+2\lambda t_j\right)^{\delta_j^0}.
\end{align*}
By the arithmetic-geometric mean inequality we have
\begin{eqnarray*}
\prod_{j\in J_0}\left(1+2\lambda t_j\right)^\frac{\delta_j^0}{k}&\leqslant&\sum_{j\in J_0}\frac{\delta_j^0}{k}\left(1+2\lambda t_j\right)\leqslant1+\frac{2\lambda}{k}\sum_{j\in J_0}c_j\Vert P_F u_j\Vert_2\cr
&\leqslant&1+\frac{2\lambda}{k}\left(\sum_{j\in J_0}c_j\right)^\frac{1}{2}\left(\sum_{j\in J_0}c_j\Vert P_F u_j\Vert_2^2\right)^\frac{1}{2}\cr
&\leqslant&1+\frac{2\lambda}{k}\left(\sum_{j=1}^mc_j\right)^\frac{1}{2}\left(\sum_{j=1}^mc_j\Vert P_F u_j\Vert_2^2\right)^\frac{1}{2}\cr
&=&1+2\lambda\sqrt\frac{n}{k},
\end{eqnarray*}
and then
$$
\prod_{j\in J_0}\left(1+2\lambda t_j\right)^{\delta_j^0}\leqslant\left(1+2\lambda \sqrt\frac{n}{k}\right)^k=\mathcal{W}\left(\lambda\sqrt\frac{n}{k}B_\infty^k\right).
$$
On the other hand, for every $x_0\in \lambda(C_0\cap F)$ we have that for every $x\in F$ and every $j\in J_0$
$$
d\left(\langle x,v_j^0\rangle  v_j^0,P_{\langle v_j^0\rangle}(\lambda(C_0\cap F)) \right)^2\leqslant d\left(\langle x,v_j^0\rangle  v_j^0,\langle x_0,v_j^0\rangle v_j^0\right)^2=\langle x-x_0,v_j\rangle^2.
$$
Thus, for every $x_0\in \lambda(C_0\cap F)$ and every $x\in F$,
$$
\sum_{j\in J_0}\delta_j^0 d\left(\langle x,v_j^0\rangle  v_j^0,P_{\langle v_j^0\rangle}(\lambda(C_0\cap F)) \right)^2\leqslant\sum_{j\in J_0}\delta_j^0\langle x-x_0,v_j^0\rangle^2=|x-x_0|^2,
$$
and hence, for every $x\in F$,
$$
\sum_{j\in J_0}\delta_j^0 d\left(\langle x,v_j^0\rangle  v_j^0,P_{\langle v_j^0\rangle}(\lambda(C_0\cap F)) \right)^2\leqslant d(x,\lambda(C_0\cap F))^2.
$$
Consequently,
\begin{eqnarray*}
\mathcal{W}(\lambda(C_0\cap F))&=&\int_{F}e^{-\pi d(x,\lambda(C_0\cap F))^2}dx\leqslant\int_{F}e^{-\pi\sum_{j\in J_0}\delta_j^0 d\left(\langle x,v_j^0\rangle  v_j^0,P_{\langle v_j^0\rangle}(\lambda(C_0\cap F)) \right)^2}dx\cr
&\leqslant&\mathcal{W}\left(\lambda\sqrt\frac{n}{k}B_\infty^k\right).
\end{eqnarray*}
Therefore, since $K\cap F\subseteq C_0\cap F$, by the monotonicity of the Wills functional we get
$$
\mathcal{W}(\lambda(K\cap F))\leqslant \mathcal{W}(\lambda(C_0\cap F))\leqslant \mathcal{W}\left(\lambda\sqrt\frac{n}{k}B_\infty^k\right).
$$
\end{proof}

The following result gives a similar upper bound for a quantity defined via a double polarity, both on the convex body and on the log-concave function. We will denote, for any $k$ dimensional linear subspace $F\in G_{n,k}$ and any convex body $K\subseteq F$,
$$
f_K(x)=e^{-\pi d^2(x,K)}\quad\forall x\in F.
$$
\begin{thm}\label{thm:Wills2JohnSections}

Let $K\subseteq\R^n$ be a convex body in John's position and let $F$ be a $k$-dimensional affine subspace at distance $d$ from $0$.
Then, for every $\lambda>0$,
$$
\int_{F}f_{(\lambda(K\cap F))^\circ}^\circ(x)dx\leqslant\frac{n+1}{k+1}\sqrt{\frac{n}{n+d^2}}\int_{\R^k}f_{\left(\lambda\sqrt{\frac{n(n+1)}{k(k+1)}}S_k\right)^\circ}^\circ(x)dx,
$$
where the polarity is taken with respect to the closest point in $F$ to the origin if it belongs to the relative interior to $K\cap F$.

Furthermore, if $K$ is centrally symmetric and $F\in G_{n,k}$ is a $k$-dimensional linear subspace then, for every $\lambda>0$,
$$
\int_{F}f_{(\lambda(K\cap F))^\circ}^\circ(x)dx\leqslant\int_{\R^k}f_{\left(\lambda\sqrt{\frac{n}{k}}B_\infty^k\right)^\circ}^\circ(x)dx.
$$
\end{thm}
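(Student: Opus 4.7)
The strategy rests on the \emph{layer-cake identity}
\[
\int f^\circ_{M^\circ}(x)\,dx \;=\; \int e^{-\Vert x\Vert_2^2/(4\pi)}\,e^{-\Vert x\Vert_M}\,dx \;=\; \int_0^\infty e^{-t}\,\mu(tM)\,dt,
\]
valid for any convex body $M$ containing the origin in a finite-dimensional Euclidean subspace, where $\mu(A):=\int_A e^{-\Vert x\Vert_2^2/(4\pi)}\,dx$ is a constant multiple of the Gaussian measure with covariance $2\pi I$. Thus both sides of the theorem can be rewritten as Laplace-type integrals of $\mu$ evaluated at scaled copies of $K\cap F$ and of the extremal body; it suffices to compare these Gaussian-type measures at every scale (symmetric case) or to establish an integrated analogue of such a comparison (general case).

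For the symmetric case, since $K\cap F\subseteq C_0\cap F=\bigcap_{j\in J_0}\{x\in F:|\langle x,v_j^0\rangle|\leq t_j\}$, we have $\Vert x\Vert_{\lambda(K\cap F)}\geq\Vert x\Vert_{\lambda(C_0\cap F)}$, and the left-hand side is dominated by the corresponding integral for $C_0\cap F$. Using the decomposition $I_F=\sum_{j\in J_0}\delta_j^0 v_j^0\otimes v_j^0$ with $\sum\delta_j^0=k$ (so $\Vert x\Vert_2^2=\sum_{j\in J_0}\delta_j^0\langle x,v_j^0\rangle^2$), Brascamp-Lieb applied to the indicator functions $\chi_{[-st_j,st_j]}$ weighted by the one-dimensional Gaussian density $e^{-s^2/(4\pi)}$, followed by log-concavity of the one-dimensional Gaussian measure of symmetric intervals and the Cauchy-Schwarz bound $\sum_{j\in J_0}t_j\delta_j^0/k\leq\sqrt{n/k}$, yields the pointwise comparison
\[
\mu\bigl(s(C_0\cap F)\bigr)\leq\mu\bigl(s\sqrt{n/k}\,B_\infty^k\bigr)\qquad\forall\,s\geq 0.
\]
This is exactly the argument already carried out for the standard Gaussian in the proof of Theorem \ref{thm:MeanWidthLownerProjections}, and it transfers verbatim to the rescaled measure $\mu$. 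Integrating against $e^{-t}dt$ (with $s=t\lambda$) and reapplying the layer-cake identity on the right yields the symmetric assertion.

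For the general case, the role of the pointwise Gaussian comparison is taken over by Lemma \ref{lem:GaussianMeasure}, whose Brascamp-Lieb argument on the subspace $H\subseteq\R^{n+1}$ associated with the cone $L$ and the decomposition $I_H=\sum_{j\in J}\kappa_j w_j\otimes w_j$ produces an inequality between Gaussian-type integrals on $F$ and on $\R^k$ (the latter in terms of $\Delta_k$). Noting the identity $\lambda\sqrt{n(n+1)/(k(k+1))}\,S_k = \lambda\sqrt{n(n+1)}\,\Delta_k$, the plan is to choose the parameter $\beta\leq 0$ (and either a specific $\alpha$ or an integration in $\alpha$ with a well-chosen weight) so that the Gaussian factor $e^{-(r-\alpha\sqrt{n+1})^2/2}$ and the exponential factor $e^{\beta r\sqrt{n+1}}$ appearing in the lemma combine, after the radial substitution $s=r/\sqrt{n}$, to reproduce the decay $e^{-t}$ of the layer-cake formula applied to $\int_0^\infty e^{-t}\mu(t\lambda(C\cap F-p_F))\,dt$ on one side and $\int_0^\infty e^{-t}\mu(t\lambda\sqrt{n(n+1)}\Delta_k)\,dt$ on the other. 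The constant $\tfrac{n+1}{k+1}\sqrt{n/(n+d^2)}$ then emerges from the Jacobian $\sqrt{(n+d^2)/n}$ that appears in Lemma \ref{lem:GaussianMeasure} together with the bound $d_1\geq\sqrt{(n+1)/(k+1)}$, which follows from Lemma \ref{lem:LowerBoundLinear}.

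The principal difficulty is the bookkeeping in the general case: one must reconcile the standard Gaussian $\gamma_k$ of Lemma \ref{lem:GaussianMeasure} with the $(2\pi)$-Gaussian $\mu$ natural to $f^\circ_{K^\circ}$, keep track of the polarity origin at $p_F$ (the closest point in $F$ to $0$), and pick $\alpha,\beta$ so that the Gaussian-in-$r$ factor in the lemma is converted, after the radial substitution, into the exponential $e^{-t}$ of the layer-cake integral. By contrast, the symmetric case reduces essentially immediately, via the layer-cake identity, to the pointwise Gaussian comparison already proved in the course of Theorem \ref{thm:MeanWidthLownerProjections}.
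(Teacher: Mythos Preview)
Your overall strategy matches the paper's proof closely: the layer-cake identity, the reduction of the symmetric case to the pointwise Gaussian comparison already established in the proof of Theorem~\ref{thm:MeanWidthLownerProjections}, and the use of Lemma~\ref{lem:GaussianMeasure} in the general case followed by integration in $\alpha$ and a specific choice of $\beta$. The symmetric case is essentially complete as stated.

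There is, however, a concrete error in the general case concerning the bound on $d_1$. You claim $d_1\geq\sqrt{(n+1)/(k+1)}$ and attribute it to Lemma~\ref{lem:LowerBoundLinear}. This is problematic on two counts. First, Lemma~\ref{lem:LowerBoundLinear} is stated only for linear subspaces $F$ (where $H=F\times\R$); in the affine case $J$ need not equal $\{1,\dots,m\}$ and the lower bound $\Vert P_Hv_j\Vert_2^2\geq 1/(n+1)$ is not available. Second, and more directly, your bound does not produce the constant you claim: plugging $d_1\geq\sqrt{(n+1)/(k+1)}$ into the factor $\frac{(n+1)\sqrt{n}}{d_1\sqrt{(k+1)(n+d^2)}}$ that emerges from Lemma~\ref{lem:GaussianMeasure} after integrating in $\alpha$ yields $\sqrt{n(n+1)/(n+d^2)}$, not $\frac{n+1}{k+1}\sqrt{n/(n+d^2)}$.

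The correct bound, used in the paper, is $d_1\geq\sqrt{k+1}$, and it follows from a much simpler observation: since $s_j=1/\Vert P_Hv_j\Vert_2\geq 1$ for every $j\in J$, one has
\[
d_1\sqrt{k+1}=\sum_{j\in J}\kappa_j s_j\;\geq\;\sum_{j\in J}\kappa_j\;=\;k+1.
\]
This holds for affine $F$ as well (no appeal to Lemma~\ref{lem:LowerBoundLinear} is needed), and substituting $d_1\geq\sqrt{k+1}$ gives exactly $\frac{n+1}{k+1}\sqrt{n/(n+d^2)}$. With this correction the argument goes through as in the paper.
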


\begin{proof}
Let $K\subseteq\R^n$ be a centrally symmetric convex body in John's position and let $F\in G_{n,k}$ be a $k$-dimensional linear subspace. From the definition of $f_{(\lambda(K\cap F))^\circ}^\circ$ and \eqref{eq: PolarFunctionWills} we have that, for every $\lambda>0$,
\begin{eqnarray*}
\int_Ff_{(\lambda(K\cap F))^\circ}^\circ(x)dx&=&\int_Fe^{-\frac{\Vert x\Vert_2^2}{4\pi}}e^{-\Vert x\Vert_{\lambda(K\cap F)}}dx=\int_Fe^{-\frac{\Vert x\Vert_2^2}{4\pi}}\int_{\Vert x\Vert_{\lambda(K\cap F)}}^\infty e^{-t}dtdx\cr
&=&\int_0^\infty e^{-t}\int_{t\lambda(K\cap F)}e^{-\frac{\Vert x\Vert_2^2}{4\pi}}dtdx\cr
&=&(2\pi)^k\int_0^\infty e^{-t}\int_{\frac{t\lambda}{\sqrt{2\pi}}(K\cap F)}\frac{e^{-\frac{\Vert x\Vert_2^2}{2}}}{(\sqrt{2\pi})^k}dxdt\cr
&=&(2\pi)^k\int_0^\infty e^{-t}\gamma_k\left(\frac{t\lambda}{\sqrt{2\pi}}(K\cap F)\right)dt.
\end{eqnarray*}
Similarly, for every $\lambda>0$,
$$
\int_{\R^k}f_{\left(\lambda\sqrt{\frac{n}{k}}B_\infty^k\right)^\circ}^\circ(x)dx=(2\pi)^k\int_0^\infty e^{-t}\gamma_k\left(\frac{t\lambda}{\sqrt{2\pi}}\sqrt{\frac{n}{k}}B_\infty^k\right)dt.
$$
As we have seen in the proof of Theorem \ref{thm:MeanWidthLownerProjections}, for every $t\geqslant0$ and every $\lambda>0$,
$$
\gamma_k\left(\frac{t\lambda}{\sqrt{2\pi}}(K\cap F)\right)\leqslant\gamma_k\left(\frac{t\lambda}{\sqrt{2\pi}}\sqrt{\frac{n}{k}}B_\infty^k\right).
$$
Therefore, for every $\lambda>0$,
$$
\int_Ff_{(\lambda(K\cap F))^\circ}^\circ(x)dx\leqslant\int_{\R^k}f_{\left(\lambda\sqrt{\frac{n}{k}}B_\infty^k\right)^\circ}^\circ(x)dx.
$$
Assume now that $K\subseteq\R^n$ is a (not necessarily symmetric) convex body in John's position and $F$ is a $k$-dimensional affine subspace at distance $d$ from the origin. As before, we have that, for every $\lambda>0$,
$$
\int_Ff_{(\lambda(K\cap F))^\circ}^\circ(x)dx=(2\pi)^k\int_0^\infty e^{-t}\gamma_k\left(\frac{t\lambda}{\sqrt{2\pi}}(K\cap F)\right)dt,
$$
where the identity holds identifying the point with respect to which the polar body is taken with the origin in the affine subspace. Following the notation in Section \ref{sec:GeneralSetting}, by Lemma \ref{lem:GaussianMeasure} we have that, for any $\alpha\in\R$ and any $\beta\leqslant0$,
\begin{align*}
\sqrt{\frac{n+d^2}{n}}&\int_0^\infty\frac{e^{-\frac{\left(r-\alpha\sqrt{n+1}\right)^2}{2}}}{\sqrt{2\pi}}e^{\beta r\sqrt{n+1}}\gamma_k\left(\frac{r}{\sqrt{n}}(C\cap F)\right)dr\leqslant\cr
&\int_{0}^\infty \frac{e^{-\frac{(r-\alpha d_1)^2}{2}}}{\sqrt{2\pi}}e^{\beta\sqrt{k+1}r}\gamma_k\left(r\sqrt{k+1}\Delta_k\right)dr,
\end{align*}
where $d_1=\frac{1}{\sqrt{k+1}}\sum_{j\in J}\delta_j\Vert P_Hv_j\Vert_2$. Integrating with respect to $\alpha\in\R$ we see that, for any $\beta\leqslant 0$,
$$
\sqrt{\frac{n+d^2}{n(n+1)}}\int_0^\infty e^{\beta r\sqrt{n+1}}\gamma_k\left(\frac{r}{\sqrt{n}}(C\cap F)\right)dr\leqslant\frac{1}{d_1}\int_{0}^\infty e^{\beta\sqrt{k+1}r}\gamma_k\left(r\sqrt{k+1}\Delta_k\right)dr,
$$
or, equivalently, for any $\beta\leqslant0$,
$$
\sqrt{\frac{n+d^2}{n+1}}\int_0^\infty e^{\frac{\beta u\sqrt{n(n+1)}}{\sqrt{2\pi}}}\gamma_k\left(\frac{u}{\sqrt{2\pi}}(C\cap F)\right)du\leqslant\frac{1}{d_1\sqrt{k+1}}\int_{0}^\infty e^{\frac{\beta u}{\sqrt{2\pi}}}\gamma_k\left(\frac{u}{\sqrt{2\pi}}\Delta_k\right)du.
$$
Taking, for any $\lambda>0$, $\beta=-\frac{1}{\lambda}\sqrt{\frac{2\pi}{n(n+1)}}$ we obtain
$$
\sqrt{\frac{n+d^2}{n+1}}\int_0^\infty e^{-\frac{u}{\lambda}}\gamma_k\left(\frac{u}{\sqrt{2\pi}}(C\cap F)\right)du\leqslant\frac{1}{d_1\sqrt{k+1}}\int_{0}^\infty e^{\frac{-u}{\lambda\sqrt{n(n+1)}}}\gamma_k\left(\frac{u}{\sqrt{2\pi}}\Delta_k\right)du,
$$
or, equivalently,
$$
\sqrt{\frac{n+d^2}{n+1}}\int_0^\infty e^{-u}\gamma_k\left(\frac{u\lambda}{\sqrt{2\pi}}(C\cap F)\right)du\leqslant\frac{\sqrt{n(n+1)}}{d_1s\sqrt{k+1}}\int_{0}^\infty e^{-u}\gamma_k\left(\frac{u\lambda\sqrt{n(n+1)}}{\sqrt{2\pi}}\Delta_k\right)du.
$$
Since $\sqrt{k(k+1)}\Delta_k=S_k$ we see that, for every $\lambda>0$,
$$
\int_0^\infty e^{-u}\gamma_k\left(\frac{u\lambda}{\sqrt{2\pi}}(C\cap F)\right)du\leqslant \frac{(n+1)\sqrt{n}}{d_1\sqrt{(k+1)(n+d^2)}}\int_{0}^\infty e^{-u}\gamma_k\left(\frac{u\lambda}{\sqrt{2\pi}}\sqrt{\frac{n(n+1)}{k(k+1)}}S_k\right)du.
$$
Consequently, for any $\lambda>0$, taking polars with respect to the closest point in $F$ to the origin which belongs to the relative interior of $K\cap F$,
\begin{align*}
\int_F f^\circ_{(\lambda(K\cap F))^\circ}(x)dx &\leqslant \int_F f^\circ_{(\lambda(C\cap F))^\circ}(x)dx\\
&\leqslant \frac{\sqrt{n}(n+1)}{d_1\sqrt{(k+1)(n+1)}}\int_{\R^k} f^\circ_{\left(\lambda\sqrt{\frac{n(n+1)}{k(k+1)}}S_k\right)^\circ}(x)dx.
\end{align*}
Since
$$
d_1\sqrt{k+1}=\sum_{j=1}^m\kappa_j s_j\geqslant\sum_{j=1}^m\kappa_j=k+1,
$$
we have that, for every $\lambda>0$,
$$
\int_F f^\circ_{(\lambda(K\cap F))^\circ}(x)dx\leqslant \frac{n+1}{k+1}\sqrt{\frac{n}{n+d^2}}\int_{\R^k} f^\circ_{\left(\lambda\sqrt{\frac{n(n+1)}{k(k+1)}}S_k\right)^\circ}(x)dx.
$$
\end{proof}

\section{The Wills functional of projections of convex bodies in L\"owner's position}\label{sec:WillsFunctionalProjections}

In this section we will give the proof of Theorem \ref{thm:WillsLownerSymmetric}.
\begin{proof}[Proof of Theorem \ref{thm:WillsLownerSymmetric}.]
Let $K$ be a centrally symmetric convex body in John's position and let $F\in G_{n,k}$ be a $k$-dimensional linear subspace. Following the notation in Section \ref{sec:GeneralSetting} we have that $K\subseteq C_0$, $K\cap F\subseteq C_0\cap F$, and
$$
(K\cap F)^\circ\supseteq (C_0\cap F)^\circ=\textrm{conv}\{\pm\Vert P_F u_j\Vert_2 v_j\,j\in J_0\}.
$$
Therefore, since the function $d(\cdot,(C_0\cap F)^\circ)^2$ is convex, for any $x\in F$ and any $\{\theta_j\}_{j\in J_0}\subseteq\R$ such that $\displaystyle{x=\sum_{j\in J_0}\delta_j^0\theta_jv_j^0}$, we have that
\begin{eqnarray*}
d(x,(K\cap F)^\circ)^2&\leqslant&d(x,(C_0\cap F)^\circ)^2=d\left(\sum_{j\in J_0}\frac{\delta_j^0}{k}k\theta_jv_j^0,(C_0\cap F)^\circ\right)^2\cr
&\leqslant&\frac{1}{k}\sum_{j\in J_0}\delta_j^0d\left(k\theta_jv_j^0,(C_0\cap F)^\circ\right)^2\cr
&\leqslant&\frac{1}{k}\sum_{j\in J_0}\delta_j^0d\left(k\theta_jv_j^0,\left[-\Vert P_F u_j\Vert_2 v_j^0,\Vert P_F u_j\Vert_2 v_j^0\right]\right)^2\cr
&=&\sum_{j\in J_0}\delta_j^0d\left(\sqrt{k}\theta_jv_j^0,\left[-\frac{\Vert P_F u_j\Vert_2}{\sqrt{k}} v_j^0,\frac{\Vert P_F u_j\Vert_2}{\sqrt{k}} v_j^0\right]\right)^2,
\end{eqnarray*}
and then setting, for every $j\in J_0$,
$$
f_j(t)=e^{-\pi d\left(\sqrt{k}tv_j^0,\left[-\frac{\Vert P_F u_j\Vert_2}{\sqrt{k}} v_j^0,\frac{\Vert P_F u_j\Vert_2}{\sqrt{k}} v_j^0\right]\right)^2},\quad\forall t\in\R
$$
we have that, for any $x\in F$ and any $\{\theta_j\}_{j\in J_0}\subseteq\R$ such that $\displaystyle{x=\sum_{j\in J_0}\delta_j^0\theta_jv_j^0}$,
$$
\prod_{j\in J_0} f_j^{\delta_j^0}(\theta_j)\leqslant e^{-\pi d(x,(K\cap F)^\circ)^2}.
$$
Therefore, by the reverse Brascamp-Lieb inequality,
\begin{eqnarray*}
\mathcal{W}((K\cap F)^\circ)&=&\int_{F}e^{-\pi d(x,(K\cap F)^\circ)^2}dx\geqslant\prod_{j\in J_0}\left(\int_{\R}f_j(t)dt\right)^{\delta_j^0}\cr
&=&\prod_{j\in J_0}\left(\frac{1}{\sqrt{k}}\int_{\R}e^{-\pi d\left(tv_j^0,\left[-\frac{\Vert P_F u_j\Vert_2}{\sqrt{k}} v_j^0,\frac{\Vert P_F u_j\Vert_2}{\sqrt{k}} v_j^0\right]\right)^2}dt\right)^{\delta_j^0}\cr
&=&\frac{1}{k^{k/2}}\prod_{j\in J_0}\mathcal{W}\left(\left[-\frac{\Vert P_F u_j\Vert_2}{\sqrt{k}} v_j^0,\frac{\Vert P_F u_j\Vert_2}{\sqrt{k}} v_j^0\right]\right)^{\delta_j^0}\cr
&=&\frac{1}{k^{k/2}}\prod_{j\in J_0}\left(1+\frac{2\Vert P_F u_j\Vert_2}{\sqrt{k}}\right)^{\delta_j^0}\geqslant \frac{1}{k^{k/2}}.
\end{eqnarray*}
\end{proof}
\section{Sections of convex bodies in minimal surface area position}

In this section we are going to prove Theorem \ref{thm:SurfaceArea}. Let us start assuming that $K$ is a centrally symmetric polytope in minimal surface area position and $F\in G_{n,k}$. By an approximation argument the inequalities obtained will also be true for any centrally symmetric convex body in minimal surface area position. We will follow the notation introduced in Section \ref{sec:GeneralSetting}.

Let, for every $j\in J_0$, $f_j:\R\to[0,\infty)$ be the function
$$
f_j(t)=e^{-\pi d(tv_j^0,P_{\langle v_j^0\rangle}(K\cap F))^2}\quad\forall t\in\R,
$$
where $\langle v_j^0\rangle$ denotes the $1$-dimensional subspace spanned by $v_j^0$. Notice that, for every $j\in J_0$,
$$
\int_{\R}f_j(t)dt=\int_{\R}e^{-\pi d(tv_j^0,P_{\langle v_j^0\rangle}(K\cap F))^2}dt=\mathcal{W}(P_{\langle v_j^0\rangle}(K\cap F)),
$$
and, since for every $j\in J_0$ we have that
$$
P_{\langle v_j^0\rangle}(K\cap F)\subseteq\left[-t_jh_K(u_j),t_jh_K(u_j)\right]v_j^0,
$$
we have that, for every $j\in J_0$,
$$
\int_{\R}f_j(t)dt\leqslant \mathcal{W}\left(\left[-t_jh_K(u_j),t_jh_K(u_j)\right]v_j\right)=\left(1+2t_jh_K(u_j)\right).
$$
Therefore, by the Brascamp-Lieb inequality
\begin{eqnarray*}
\int_{F}e^{-\pi\sum_{j\in J_0}\delta_j^0d\left(\langle x,v_j^0\rangle  v_j^0,P_{\langle v_j^0\rangle}(C_0\cap F) \right)^2}&=&\int_{F}\prod_{j\in J_0}f_j^{\delta_j^0}(\langle x,v_j^0\rangle) dx\cr
\leqslant\prod_{j\in J_0}\left(\int_{\R}f_j(t)dt\right)^{\delta_j^0}
&\leqslant&\prod_{j\in J_0}\left(1+2t_jh_K(u_j)\right)^{\delta_j^0}.
\end{eqnarray*}
By the arithmetic-geometric mean inequality we have
\begin{eqnarray*}
\prod_{j\in J_0}\left(1+2t_jh_K(u_j)\right)^\frac{\delta_j^0}{k}&\leqslant&\sum_{j\in J_0}\frac{\delta_j^0}{k}\left(1+2t_jh_K(u_j)\right)\cr
&=&1+\frac{2}{k}\sum_{j\in J_0}\frac{n|F_j|\Vert P_F u_j\Vert_2h_K(u_j)}{\partial(K)}\cr
&\leqslant&1+\frac{2n}{k\partial(K)}\sum_{j\in J_0}|F_j|h_K(u_j)\cr
&\leqslant&1+\frac{2n}{k\partial(K)}\sum_{j=1}^m|F_j|h_K(u_j)\cr
&=&1+2\frac{n^2|K|}{k\partial(K)},
\end{eqnarray*}
and then,
$$
\prod_{j\in J_0}\left(1+2t_jh_K(u_j)\right)^{\delta_j^0}\leqslant\left(1+2\frac{n^2|K|}{k\partial(K)}\right)^k=\mathcal{W}\left(\frac{n^2|K|}{k\partial(K)}B_\infty^k\right).
$$
On the other hand, for every $x_0\in K\cap F$ we have that, for every $x\in F$ and every $j\in J_0$,
$$
d\left(\langle x,v_j^0\rangle  v_j^0,P_{\langle v_j^0\rangle}(K\cap F) \right)^2\leqslant d\left(\langle x,v_j^0\rangle  v_j^0,\langle x_0,v_j^0\rangle v_j^0\right)^2=\langle x-x_0,v_j\rangle^2.
$$
Thus, for every $x_0\in C_0\cap F$ and every $x\in F$,
$$
\sum_{j\in J_0}\delta_j^0 d\left(\langle x,v_j^0\rangle  v_j^0,P_{\langle v_j^0\rangle}(K\cap F) \right)^2\leqslant\sum_{j\in J_0}\delta_j^0\langle x-x_0,v_j^0\rangle^2=|x-x_0|^2,
$$
and hence, for every $x\in F$,
$$
\sum_{j\in J_0}\delta_j^0 d\left(\langle x,v_j^0\rangle  v_j^0,P_{\langle v_j^0\rangle}(K\cap F) \right)^2\leqslant d(x,K\cap F)^2.
$$
Consequently,
\begin{eqnarray*}
\mathcal{W}(K\cap F)&=&\int_{F}e^{-\pi d(x,K\cap F)^2}dx\leqslant\int_{F}e^{-\pi\sum_{j\in J_0}\delta_j^0 d\left(\langle x,v_j^0\rangle  v_j^0,P_{\langle v_j^0\rangle}(K\cap F) \right)^2}dx\cr
&\leqslant&\mathcal{W}\left(\frac{n^2|K|}{k\partial(K)}B_\infty^k\right),
\end{eqnarray*}
which proves (i).

Since for every $\lambda\geqslant0$ we have that $\lambda K$ is in minimal surface area position, we have by (i) that, for every $\lambda\geqslant0$,
$$
\mathcal{W}(\lambda(K\cap F))\leqslant\mathcal{W}\left(\lambda\frac{n^2}{k}\frac{|K|}{\partial(K)}B_\infty^k\right).
$$
Therefore, as explained in Section \ref{subsec:Wills}, we obtain that $V_1(K\cap F)\leqslant V_1\left(\lambda\frac{n^2}{k}\frac{|K|}{\partial(K)}B_\infty^k\right)$ and $V_n((K\cap F)\leqslant V_n\left(\lambda\frac{n^2}{k}\frac{|K|}{\partial(K)}B_\infty^k\right)$, which is equivalent to (ii) and (iii).

Now, observe that for every $x\in F$ we have that
\begin{eqnarray*}
h_{K\cap F}(x)&=&\inf\left\{\sum_{j\in J_0}|\alpha_j|\,:\,x=\sum_{j\in J_0}\frac{\alpha_j}{h_K(u_j)}P_Fu_j\right\}\cr
&=&\inf\left\{\sum_{j\in J_0}|\beta_j|t_jh_K(u_j)\,:\,x=\sum_{j\in J_0}\beta_jv_j^0\right\},
\end{eqnarray*}
where we have set, for every $j\in J_0$, $t_j=\frac{1}{\Vert P_H u_j\Vert_2}=\left(\frac{c_j}{\delta_j^0}\right)^\frac{1}{2}$. For every $j\in J_0$, we define
$$
f_j(t):=e^{-|t|t_jh_K(u_j)},\ \ t\in\R.
$$
Then, if $\displaystyle{x=\sum_{j\in J_0}\delta_j^0\theta_jv_j^0}$ for some $\{\theta_j\}_{j\in J_0}\subseteq \R$, we have
$$
\prod_{j\in J_0} f_j^{\delta_j^0}(\theta_j)=e^{-\sum_{j\in J_0}\delta_j|\theta_j|t_jh_K(u_j)}\leqslant e^{-h_{K\cap F}(x)}.
$$
Therefore, by the reverse Brascamp-Lieb inequality,
\begin{eqnarray*}
k!|(K\cap F)^\circ|&=&\int_{F}e^{-h_{K\cap F}(x)}dx\geqslant\prod_{j\in J}\left(\int_{\R}e^{-|t|t_jh_K(u_j)}dt\right)^{\delta_j}\cr
&=&\frac{2^k}{\prod_{j\in J_0}(t_jh_K(u_j))^{\delta_j}}.
\end{eqnarray*}
By the arithmetic-geometric mean inequality,
\begin{eqnarray*}
\prod_{j\in J_0}\left(t_jh_K(u_j)\right)^\frac{{\delta_j^0}}{k}&\leqslant&\sum_{j\in J_0}\frac{\delta_j^0}{k}t_jh_K(u_j)=\sum_{j\in J_0}\frac{n|F_j|\Vert P_Fu_j\Vert_2 h_K(u_j)}{k\partial (K)}\cr
&\leqslant&\frac{n}{k\partial(K)}\sum_{j=1}^m|F_j| h_K(u_j)=\frac{n^2|K|}{k\partial(K)}.
\end{eqnarray*}
Taking into account that $|(B_\infty^k)^\circ|=|B_1^k|=\frac{2^k}{k!}$, we obtain
$$
|(K\cap F)^\circ|^{1/k}\geqslant\frac{k|\partial(K)}{n^2|K|}|(B_\infty^k)^\circ|^{1/k},
$$
which gives us (iv).

Finally, observe that for every $t\geqslant 0$
$$
t(K\cap F)=\{x\in F\,:\,|\langle x,v_j^0\rangle|\leqslant tt_jh_K(u_j),\forall\, j\in J_0\}.
$$
By the Brascamp-Lieb inequality he have that
\begin{eqnarray*}
\gamma_k(t(K\cap F))&=&\int_{\R^n}\prod_{j\in J_0}\chi_{[-tt_jh_K(u_j),tt_jh_K(u_j)]}(\langle x, v_j^0\rangle)\frac{e^{-\sum_{j\in J_0}\frac{\delta_j^0\langle x,v_j^0\rangle^2}{2}}}{(2\pi)^{k/2}}dx\cr
&\leqslant&\prod_{j\in J_0}\left(\int_{-tt_jh_K(u_j)}^{tt_jh_K(u_j)}\frac{e^{-\frac{t^2}{2}}}{\sqrt{2\pi}}\right)^{\delta_j^0}\cr
&=&\prod_{j\in J_0}\gamma_1\left(\left[-tt_jh_K(u_j),tt_jh_K(u_j)\right]\right)^{\delta_j^0}.
\end{eqnarray*}
Since the $1$-dimensional Gaussian measure is log-concave, we have that
\begin{eqnarray*}
\prod_{j\in J_0}\gamma_1\left(\left[-tt_jh_K(u_j),tt_jh_K(u_j)\right]\right)^{\delta_j^0}&\leqslant&\gamma_1\left(\left(\sum_{j\in J_0}\frac{\delta_j^0tt_jh_K(u_j)}{k}\right)[-e_1,e_1]\right)^k\cr
&=&\gamma_k\left(t\left(\sum_{j\in J_0}\frac{\delta_j^0t_jh_K(u_j)}{k}\right)B_\infty^k\right).
\end{eqnarray*}
Since
\begin{eqnarray*}
\sum_{j\in J_0}\frac{\delta_j^0t_jh_K(u_j)}{k}&=&\sum_{j\in J_0}\frac{n|F_j|\Vert P_Fu_j\Vert_2h_K(u_j)}{k\partial(K)}\leqslant\sum_{j\in J_0}\frac{n|F_j|h_K(u_j)}{k\partial(K)}\cr
&\leqslant&\sum_{j=1}^m\frac{n|F_j|h_K(u_j)}{k\partial(K)}=\frac{n^2}{k}\frac{|K|}{\partial(K)},
\end{eqnarray*}
we have that, for any $t\geq0$,
$$
\gamma_k(t(K\cap F))\leqslant\gamma_k\left(t\frac{n^2}{k}\frac{|K|}{\partial(K)}B_\infty^k\right).
$$
Therefore,
$$
w((K\cap F)^\circ)\geqslant w\left(\left(\frac{n^2}{k}\frac{|K|}{\partial(K)}B_\infty^k\right)^\circ\right)=\frac{k}{n^2}\frac{\partial(K)}{|K|}w((B_\infty^k)^\circ),
$$
and we obtain (v).

Let us now assume that $K$ is a (not necessarily symmetric) polytope in minimal surface area position and $F\in G_{n,k}$. Again, by approximation the inequalities obtained will be true for any convex body. Following the notation in Section \ref{sec:GeneralSetting} we have that, for any $x\in F$,
\begin{eqnarray*}
\|x\|_{\Pi^{*}K\cap F}&=&\frac{1}{2}\sum_{j=1}^m |F_j||\langle x,u_j\rangle|=\frac{1}{2}\sum_{j\in J_0} |F_j|\|P_Fu_j\|_2\left|\left\langle x, v_j^0\right\rangle\right|\cr
&=&\sum_{j\in J_0} \frac{\partial(K)\delta_j^0t_j}{2n}\left|\left\langle x, v_j^0\right\rangle\right|.
\end{eqnarray*}
Therefore, by the Brascamp-Lieb inequality
\begin{eqnarray*}
k!|\Pi^*K\cap F|&=&\int_{F}e^{-\Vert x\Vert_{\Pi^*K\cap H}dx}=\int_{F}e^{-\sum_{j\in J_0} \frac{\partial(K)\delta_j^0t_j}{2n}\left|\left\langle x, v_j^0\right\rangle\right|}dx\cr
&=&\int_F\prod_{j\in J_0}\left(e^{-\frac{\partial(K)t_j}{2n}\left|\left\langle x, v_j^0\right\rangle\right|}\right)^{\delta_j^0}dx\cr
&\leqslant&\prod_{j\in J_0}\left(\int_\R e^{-\frac{\partial(K)t_j}{2n}t}dt\right)^{\delta_j^0}=\prod_{j\in J_0}\left(\frac{4n}{\partial(K)t_j}\right)^{\delta_j^0}\cr
&=&\left(\frac{4n}{\partial(K)}\right)^k\prod_{j\in J_0}\left(\Vert P_F u_j\Vert\right)^{\delta_j^0}\leqslant\left(\frac{4n}{\partial(K)}\right)^k.
\end{eqnarray*}
On the other hand, we have that $P_F\Pi K=(\Pi^*K\cap F)^\circ$ and then, for every $x\in F$
$$
\Vert x\Vert_{P_F\Pi K}=\inf\left\{\max_{j\in J_0}|\tau_j|\,:\, x=\sum_{j\in J_0} \frac{\partial(K)\delta_j^0t_j\tau_j}{2n}v_j^{0}\right\}.
$$
Therefore, taking for every $j\in J_0$ the function $h_j=\chi_{\left[-\frac{\partial(K)t_j}{2n},\frac{\partial(K)t_j}{2n}\right]}$
we have that any decomposition of $x$ of the form $x=\sum_{j\in J_0}\delta_j^0\theta_jv_j^0$ with $|\theta_j|\leqslant \frac{\partial(K)t_j}{2n}$ gives a decomposition of $x$ of the form
$$
x=\sum_{j\in J_0}\frac{\partial(K)\delta_j^0t_j\tau_j}{2n}v_j^{0}\quad\textrm{with}\quad \tau_j=\frac{2n\theta_j}{\partial(K)t_j},
$$
and then $\max_{j\in J_0}|\tau_j|\leqslant1$. Therefore, the function $h=\chi_{P_F\Pi K}$ has the property that, for every $\{\theta_j\}_{j\in J_0}\subseteq\R$,
$$
h\left(\sum_{j\in J_0}\delta_j^0\theta_jv_j^0\right)\geqslant \prod_{j\in J_0}h_j^{\delta_j^0}(\theta_j),
$$
and hence, by the reverse Brascamp-Lieb inequality
\begin{eqnarray*}
|P_F\Pi K|&=&\int_{F}h(x)dx\geqslant\prod_{j\in J_0}\left(\int_{\R}h_j(t)dt\right)^{\delta_j^0}=\prod_{j\in J_0}\left(\frac{\partial(K)t_j}{n}\right)^{\delta_j^0}\cr
&=&\left(\frac{\partial(K)}{n}\right)^{k}\prod_{j\in J_0}\left(\frac{1}{\Vert P_Fu_j\Vert}\right)^{\delta_j^0}
\geqslant\left(\frac{\partial(K)}{n}\right)^{k}.
\end{eqnarray*}


\end{document}